\newcommand{\ds}{\displaystyle}
\newcommand{\RR}{{\mathbb R}}
\newcommand{\NN}{{\mathbb N}}
\newcommand{\ZZ}{{\mathbb Z}}
\newcommand{\QQ}{{\mathbb Q}}
\newcommand{\Z}{{\mathbb Z}}
\newcommand{\CC}{{\mathbb C}}
\newcommand\cD{{\mathcal D}}
\newcommand\cR{{\mathcal R}}
\def\ee{\mathrm{e}}     
\def\ii{\mathrm{i}}     
\def\veps{\varepsilon}
\def\sech{\mathop{\mathrm{sech}}}
\newcommand{\bspm}{\left[\begin{smallmatrix}}\newcommand{\espm}{\end{smallmatrix}\right]}
\newcommand{\bspmb}{\left(\begin{smallmatrix}}\newcommand{\espmb}{\end{smallmatrix}\right)}
\newcommand{\bpm}{\left[\begin{matrix}}\newcommand{\epm}{\end{matrix}\right]}
\newtheorem{theo}{Theorem}[section]
\newtheorem{lemma}[theo]{Lemma}
\newtheorem{ass}[theo]{Assumption}
\newtheorem{remark}[theo]{Remark}
\newcommand{\PROOF}{\textbf{Proof.} }
\renewenvironment{proof}{\noindent\PROOF}{\hfill $\Box$}
\def\epsilon{\varepsilon}
\def\beq{\begin{equation}}
	\def\eeq{\end{equation}}
\begin{document}

	\title[Traveling modulating pulse solutions]{Traveling modulating pulse solutions
		with small tails for a nonlinear wave equation in periodic media}

	\author[T. Dohnal]{Tom\'{a}\v{s} Dohnal}
	\address[T. Dohnal]{Institut f\"ur Mathematik, Martin Luther University Halle-Wittenberg, Germany}
	\email{tomas.dohnal@mathematik.uni-halle.de}

	\author[D. E. Pelinovsky]{Dmitry E. Pelinovsky}
	\address[D. E. Pelinovsky]{Department of Mathematics and Statistics, McMaster University,	Hamilton, Ontario, Canada, L8S 4K1}
	\email{dmpeli@math.mcmaster.ca}

	\author[G. Schneider]{Guido Schneider}
	\address[G. Schneider]{Institut f\"ur Analysis, Dynamik und Modellierung, Universit\"at Stuttgart, 70569 Stuttgart, Germany}
	\email{guido.schneider@mathematik.uni-stuttgart.de}

	\date{\today}

	\maketitle

	\begin{abstract}
		Traveling modulating pulse solutions consist of a small amplitude pulse-like envelope moving with a constant speed and modulating a harmonic carrier wave.
		Such solutions can be approximated by solitons of an effective nonlinear Schr\"{o}dinger equation arising as the envelope equation.
		We are interested in a rigorous  existence proof of such solutions for a nonlinear wave equation with spatially periodic coefficients.
		Such solutions are quasi-periodic in a reference frame co-moving with the envelope. We use spatial dynamics, invariant manifolds, and
		near-identity transformations to construct such solutions on large domains in time and space.  Although the spectrum
		of the linearized equations in the spatial dynamics formulation contains infinitely many eigenvalues on the imaginary axis or in the worst case the complete imaginary axis,
		a small denominator problem is avoided when the solutions
		are localized on a finite spatial domain with small tails in far fields.
	\end{abstract}


	\section{Introduction}\label{intro}

	We consider the
	semi-linear wave equation
	\begin{equation} \label{model}
		\partial^2_t u(x,t) - \partial^2_x u(x,t) + \rho(x) u(x,t) = \gamma r(x) u(x,t)^3, \quad x,t\in \RR,
	\end{equation}
	where
	$ x,t,u(x,t) \in \mathbb{R} $, $ \gamma =
	\pm 1 $, and $\rho,r$ are bounded, $2\pi$-periodic, strictly positive, and even functions in the set
	\begin{equation}
		\label{functions}
		\mathcal{X}_0 = \{ \rho \in L^{\infty}(\mathbb{R}) : \quad \rho(x) = \rho(x+2\pi), \quad \rho(-x) = \rho(x), \quad \rho(x) \geq \rho_0 > 0, \quad \forall x \in \mathbb{R} \}.
	\end{equation}
	The purpose of this paper is to prove the existence of
	traveling modulating pulse solutions.

	\begin{remark}
		The semi-linear wave equation \eqref{model} can be considered as a
		phenomenological
		model for the description
		of electromagnetic waves in  photonic crystal fibers.
		Such fibers show a much larger (structural) dispersion than
		homogeneous glass fibers. As a consequence they are much better able
		to support nonlinear localized structures such as pulses than their homogeneous counterpart.
		Most modern technologies for the transport of information through glass fibers
		use these pulses, cf. \cite{Ik20}. Sending a light pulse corresponds to sending the digital information ``one" over the zero background. Physically
		such a pulse consists of a localized envelope which modulates  an underlying electromagnetic carrier wave.
	\end{remark}

	The traveling modulating pulse solutions will be constructed as bifurcations from the trivial solution $ u = 0 $. Hence we first consider the linear wave equation
	$$
	\partial^2_t u(x,t) -
	\partial^2_x u(x,t)  + \rho(x) u(x,t) = 0, \quad x,t\in \RR.
	$$
	Since $ \rho \in \mathcal{X}_0 $, the linear wave equation can be solved by the family of Bloch modes
	\begin{equation*}
		u(x,t) = e^{\pm \ii\omega_n (l)t} e^{\ii l x} f_n (l, x), \quad n \in \mathbb{N}, \quad l \in \mathbb{B},
	\end{equation*}
	where $\mathbb{B} := \mathbb{R} \backslash \mathbb{Z}$ and  where the pair $(\omega_n(l),f_n(l,\cdot))$ satisfies the eigenvalue problem
	\begin{equation} \label{ulm1a}
		-(\partial_x+ \ii l)^2 f_n(l,x) +
		\rho(x) f_n(l,x) = \omega^2_n(l) f_n(l,x), \quad x\in \RR
	\end{equation}
	subject to the boundary  conditions
	$$
	f_n(l,x) = f_n(l, x + 2\pi)
	\quad  \textrm{and} \quad
	f_n(l,x) = f_n(l+1, x ) e^{\ii x} \quad \forall l \in \mathbb{R}, \; \forall x \in \mathbb{R}.
	$$  The eigenfunctions $f_n$ are $L^2(0,2\pi)$-normalized according to
	$$
	\int_0^{2\pi} |f_n(l,x)|^2 dx = 1, \quad \forall n \in \NN, \;\; \forall l \in \mathbb{B}.
	$$
	The curves of eigenvalues $l \mapsto \omega_n (l)$ are ordered such that
	$$
	0 < \omega_1(l) \leq \omega_2(l) \leq \dots \leq \omega_n(l) \leq
	\omega_{n+1}(l)  \leq \dots \quad \forall l \in \mathbb{B},
	$$
	where $ \omega_n(l) \to \infty $ for $ n \to \infty $, cf. \cite{OW11}.
	The positivity of $ \omega_1(l) $ follows from positivity of $\rho \in \mathcal{X}_0$. A prototypical pattern of the curves of eigenvalues is shown on Figure \ref{fig1}.

	\begin{figure}[htbp]
		\begin{picture}(140,240)(0,-80)
			\put(-63,-80){\epsfig{file=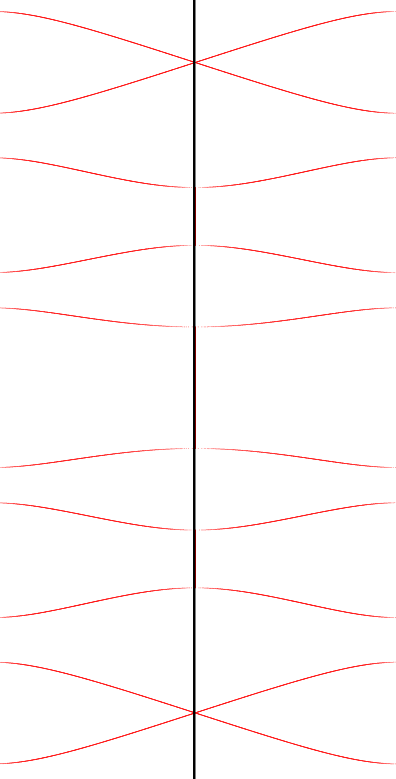,width=7cm,height=8cm}}
			\put(40,140){$ \omega_n $}
			\put(150,20){$ l $}
			\put(135,20){$ \frac{1}{2}$}
			\put(-75,20){$ -\frac{1}{2}$}
			\put(-80,35){\vector(1,0){240}}
			\put(35,35){\vector(0,1){120}}
		\end{picture}
		\caption{The curves of eigenvalues $\{ \pm \omega_n(l) \}_{n \in \mathbb{N}}$ plotted as functions of the Bloch wave numbers  $ l \in \mathbb{B}$ in a typical situation. }
		\label{fig1}
	\end{figure}

	The modulating pulse solutions  can be obtained via a weakly nonlinear  multiple scaling ansatz which results in the NLS equation for the description of slow temporal and spatial modulations of the envelope. In detail, for fixed $ n_0 \in \NN $ and $ l_0 \in \mathbb{B} $ solutions of the semi-linear wave equation \eqref{model} can be approximated by the	ansatz
	\begin{equation}
		\label{u-approx}
		u_{\rm app}(x,t) = \veps A (\veps (x-c_g t), \veps^2t) f_{n_0} (l_0, x) \ee^{\ii l_0
			x}  \ee^{-\ii
			\omega_{n_0}(l_0) t} + \text{\emph{c.c.}},
	\end{equation}
	with complex amplitude  $A = A(X,T)$, group velocity
	$ c_g := \omega'_{n_0}(l_0)$, and $0
	< \veps \ll 1$ being a small perturbation parameter. At the leading-order approximation, the envelope amplitude $A$
	satisfies the following NLS equation
	\begin{equation}
		\label{rr1}
		2 \ii \partial_T A + \omega''_{n_0} (l_0) \partial^2_X A +
		\gamma_{n_0}(l_0)  A|A|^2 = 0,
	\end{equation}
	where
	\[
	\gamma_{n_0} (l_0) = \frac{3 \gamma}{\omega_{n_0} (l_0)} \int^{2\pi}_0 r(x) |f_{n_0}
	(l_0,x)|^4 dx.
	\]
	The NLS equation (\ref{rr1}) possesses traveling pulse solutions if $\omega''_{n_0} (l_0) \gamma_{n_0}(l_0) > 0$ in the form:
	\begin{equation}
		\label{NLS-soliton}
		A(X,T) = \gamma_1 \sech (\gamma_2 (X-\tilde{c}T))
		\ee^{\frac{\ii  (2 \tilde{c} X - \tilde{c}^2 T)}{2\omega''_{n_0}(l_0)}}
		\ee^{-\ii \tilde{\omega} T}
	\end{equation}
	where $\tilde{c}$ and $\tilde \omega$ are arbitrary parameters such
	that $\tilde{\omega} \omega''_{n_0} (l_0) <0$ and the positive constants $ \gamma_1 $ and $ \gamma_2 $ are uniquely given by
	\begin{equation}
		\label{gamma-1-2}
		\gamma_1 = \sqrt{\frac{2 |\tilde{\omega}|}{|\gamma_{n_0}(l_0)|}}, \quad
		\gamma_2 = \sqrt{\frac{2 |\tilde{\omega}|}{|\omega_{n_0}'' (l_0)|}}.
	\end{equation}
	Without loss of generality, we can set $\tilde{c} = 0$ and $-\tilde{\omega}  = {\rm sgn}(\omega_{n_0}''(l_0)) = {\rm sgn}(\gamma_{n_0}(l_0))$, due to the scaling properties of the NLS equation (\ref{rr1}).

	\begin{remark}
		As an example consider the spatially homogeneous case with  $\rho(x) = 1$ and $r(x) = 1$,
		i.e.,  the semi-linear wave equation with constant coefficients. Then, we can re-order the eigenvalues
		and define
		\begin{equation}\label{E:om_fn}
			f_n(l,x) = \frac{1}{\sqrt{2\pi}} e^{\ii nx}, \quad
			\omega_n(l) := \sqrt{1+(n+l)^2}, \quad n \in \mathbb{Z}, \quad l \in \mathbb{B},
		\end{equation}
		producing
		\begin{equation}\label{E:c_om_gam}
			c_g = \omega_{n_0}'(l_0)= \frac{n_0+l_0}{\omega_{n_0}(l_0)}, \quad
			\omega_{n_0}''(l_0) = \frac{1}{\omega_{n_0}(l_0)^3}, \quad \gamma_{n_0}(l_0) = \frac{3 \gamma}{2 \pi \omega_{n_0}(l_0)}.
		\end{equation}
		The traveling pulse solutions exist for $\gamma = 1$ with $\tilde{\omega} = -1$ since $\omega_{n_0}''(l_0) > 0$.
	\end{remark}

	\begin{remark}
		In \cite{BSTU06} an approximation
		result was established that guarantees that wave-packet solutions of
		the semi-linear wave equation
		\eqref{model} with periodic coefficients can be approximated
		by solutions of the NLS equation (\ref{rr1}) on an $\mathcal{O}(\veps^{-2})$-time scale via $u_{\rm app}$ given by  (\ref{u-approx}). In \cite{DR20} this approximation was extended to the $d$-dimensional case.
	\end{remark}

	Existence of standing and moving modulating pulse solutions in homogenous and periodic media has been considered beyond the $\mathcal{O}(\veps^{-2})$-time scale. Depending on the problem, we have to distinguish between pulse solutions
	which decay to zero for $ |x| \to \infty  $ and generalized pulse solutions
	which have some small tails for large values of $|x|$.

	\begin{remark}
		In the spatially homogeneous case, i.e.
		if $ \rho = r = 1 $, the modulating pulse solutions are time-periodic
		in a frame co-moving with the envelope. Time-periodic solutions with finite energy are
		called breather solutions.
		However, it cannot
		be expected that such solutions with finite
		energy do exist in general,  according to the non-persistence of breathers result for
		nonlinear wave equations in homogeneous media \cite{De93,BKW94,Ma21}. Nevertheless, generalized breather solutions,
		i.e., modulating pulse solutions  with small tails, do exist.  Such solutions were
		constructed in \cite{GS01} with the help of spatial dynamics,
		invariant manifold theory and normal form theory.  In general, such solutions can only be constructed on
		large, but finite, intervals in $\RR$, cf. \cite{GS05,GS08}.
	\end{remark}

	\begin{remark}
		In the spatially periodic case standing generalized modulating pulse  solutions of the semi-linear wave equation (\ref{model}) have been constructed in \cite{LBCCS09}.
		These solutions are time-periodic, i.e., again breather solutions, but in contrast to the
		homogeneous case true spatially localized solutions can be constructed by properly tayloring the periodic coefficients.
		In \cite{BCLS11} breather solutions were constructed by
		spatial dynamics in the phase space of time-periodic solutions, invariant manifold
		theory and normal form theory. With the same approach in \cite{Ma20} such solutions were
		constructed for a cubic Klein-Gordon equation on an infinite periodic necklace graph.
		The existence of large amplitude breather solutions of the semi-linear wave equation \eqref{model} was shown in  \cite{HR19,MS21} via a variational approach. Breather solutions were recently considered in  \cite{Reichel22} for  quasi-linear wave equations with periodic coefficients.
	\end{remark}

	\begin{remark}
		To our knowledge traveling modulating pulse solutions have not been constructed before
		for the semi-linear wave equation \eqref{model} with spatially  periodic coefficients. For the  Gross-Pitaevski equation with a periodic potential
		such solutions were constructed in \cite{PS08} by using the coupled-mode approximation and in \cite[Chapter 5.6]{PelBook} by using the NLS approximation. The Gross-Pitaevski equation
		has a phase-rotational symmetry which is not present in the  semi-linear wave equation \eqref{model}. Another new aspect is the fact that in the present paper the normal form transformations are
		infinite-dimensional in contrast to the existing literature.
	\end{remark}

	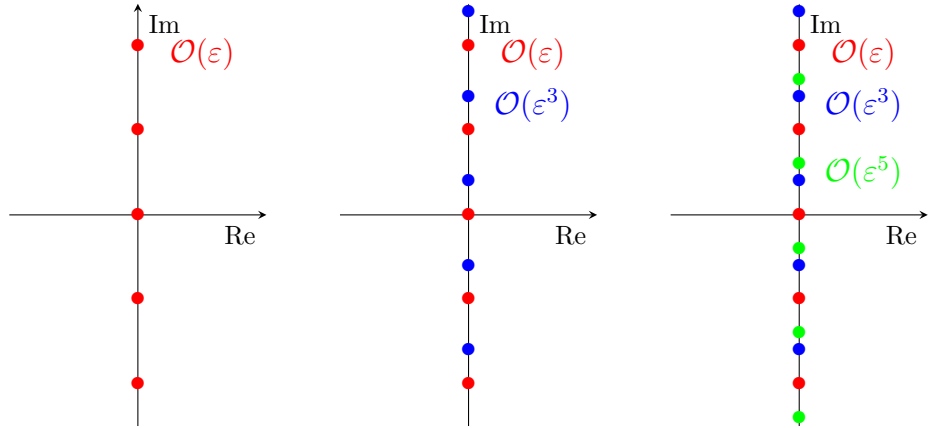
\begin{figure}[htbp] 
		\centering
				%
				%
		\begin{tikzpicture}
			\begin{axis}[width=5.0cm, height=7.2cm,
				xmin=-3.5, xmax=3.5,
				ymin=-5, ymax=5,
				axis lines=center,
				ticks=none,
				]
				\node[red] at (axis cs:0,0) {$\bullet$};
				\node[red] at (axis cs:0,2) {$\bullet$};
				\node[red] at (axis cs:0,4) {$\bullet$};
				\node[red] at (axis cs:0,-2) {$\bullet$};
				\node[red] at (axis cs:0,-4) {$\bullet$};
				\node[red] at (axis cs:1.75,3.8) {$\mathcal{O}(\varepsilon)$};
				\node[black, below left] at (axis cs:3.5,0) { \footnotesize Re };
				\node[black, below right] at (axis cs:0,5)  { \footnotesize Im };

			\end{axis}
		\end{tikzpicture}
		\qquad
		\begin{tikzpicture}
			\begin{axis}[width=5.0cm, height=7.2cm,
				xmin=-3.5, xmax=3.5,
				ymin=-5, ymax=5,
				axis lines=center,
				ticks=none,
				]
				\node[black, below left] at (axis cs:3.5,0) { \footnotesize Re };
				\node[black, below right] at (axis cs:0,5)  { \footnotesize Im };
				\node[red] at (axis cs:0,0) {$\bullet$};
				\node[red] at (axis cs:0,2) {$\bullet$};
				\node[red] at (axis cs:0,4) {$\bullet$};
				\node[red] at (axis cs:0,-2) {$\bullet$};
				\node[red] at (axis cs:0,-4) {$\bullet$};
				\node[red] at (axis cs:1.75,3.8) {$\mathcal{O}(\varepsilon)$};
				\node[blue] at (axis cs:0,0.8) {$\bullet$};
				\node[blue] at (axis cs:0,2.8) {$\bullet$};
				\node[blue] at (axis cs:0,4.8) {$\bullet$};
				\node[blue] at (axis cs:0,-1.2) {$\bullet$};
				\node[blue] at (axis cs:0,-3.2) {$\bullet$};
				\node[blue] at (axis cs:1.75,2.6) {$\mathcal{O}(\varepsilon^3)$};
			\end{axis}
		\end{tikzpicture}
		\qquad
		\begin{tikzpicture}
			\begin{axis}[width=5.0cm, height=7.2cm,
				xmin=-3.5, xmax=3.5,
				ymin=-5, ymax=5,
				axis lines=center,
				ticks=none,
				]
				\node[black, below left] at (axis cs:3.5,0) { \footnotesize Re };
				\node[black, below right] at (axis cs:0,5)  { \footnotesize Im };

				\node[red] at (axis cs:0,0) {$\bullet$};
				\node[red] at (axis cs:0,2) {$\bullet$};
				\node[red] at (axis cs:0,4) {$\bullet$};
				\node[red] at (axis cs:0,-2) {$\bullet$};
				\node[red] at (axis cs:0,-4) {$\bullet$};
				\node[red] at (axis cs:1.75,3.8) {$\mathcal{O}(\varepsilon)$};
				\node[blue] at (axis cs:0,0.8) {$\bullet$};
				\node[blue] at (axis cs:0,2.8) {$\bullet$};
				\node[blue] at (axis cs:0,4.8) {$\bullet$};
				\node[blue] at (axis cs:0,-1.2) {$\bullet$};
				\node[blue] at (axis cs:0,-3.2) {$\bullet$};
				\node[blue] at (axis cs:1.75,2.6) {$\mathcal{O}(\varepsilon^3)$};
				\node[green] at (axis cs:0,-0.8) {$\bullet$};
				\node[green] at (axis cs:0,-2.8) {$\bullet$};
				\node[green] at (axis cs:0,-4.8) {$\bullet$};
				\node[green] at (axis cs:0,1.2) {$\bullet$};
				\node[green] at (axis cs:0,3.2) {$\bullet$};
				\node[green] at (axis cs:1.75,1.0) {$\mathcal{O}(\varepsilon^5)$};
			\end{axis}
		\end{tikzpicture}
		\caption{Eigenvalues of the spatial dynamics formulation, see \eqref{spat-dyn} below, are dense on the imaginary axis. However,
			due to the convolution structure w.r.t. the $ z $-variable, see Theorem \ref{thm1},
			for  a certain power of $ \varepsilon $ only a part of the linear operator has to be taken into account.
			For controlling the order $ \mathcal{O}(\varepsilon) $ of the solution only the part $ A_1(\omega,c) $
			has to be considered. The central spectrum of  $ A_1(\omega,c) $ is sketched
			in the left panel. In the middle panel the central spectrum of $ A_1(\omega,c) $
			and $ A_3(\omega,c) $ is sketched. It plays a role  for controlling the order $ \mathcal{O}(\varepsilon^3) $.  The right panel shows a sketch of
			the central spectrum of $ A_1(\omega,c) $, $ A_3(\omega,c) $
			and $ A_5(\omega,c) $ which plays a role  for controlling the order $ \mathcal{O}(\varepsilon^5) $.
			In all cases there is a spectral gap between zero and the rest of the spectrum.
		}
		\label{newpic1}
	\end{figure}

	In the spatially periodic case traveling modulating solutions of the semi-linear wave equation (\ref{model}) in general are  quasi-periodic in the   frame co-moving with the envelope.
	Hence their  construction requires the use of three spatial variables
	rather than two spatial variables used in the previous works \cite{LBCCS09} and \cite{PS08}.  However,
	although the spectrum
	of the linearized equations in the spatial dynamics formulation contains infinitely many eigenvalues on the imaginary axis or in the worst case the complete imaginary axis, a small denominator problem
	is avoided by considering the problem on a finite spatial domain and by allowing for small tails, as illustrated in Figure \ref{newpic1}.

	The following result will be proven in this work. Figure \ref{fig5}
	illustrates the construction of a generalized modulating pulse solution as
	described in the following theorem.

	\begin{theo} \label{thm1}
		Let $ \rho, r \in \mathcal{X}_0 $ and $\gamma \neq 0$. Choose  $n_0 \in \NN $ and $ l_0 > 0 $ such that the following conditions
		are satisfied:
		\begin{equation}
			\label{non-degeneracy-1}
			\omega_{n}(l_0) \neq \omega_{n_0}(l_0), \qquad  \forall n \neq n_0,
		\end{equation}
		\begin{equation}
			\label{non-degeneracy-2}
			\omega_{n_0}'(l_0) \neq \pm 1, \qquad \omega_{n_0}''(l_0) \neq 0,
		\end{equation}
		and
		\begin{equation}
			\label{non-resonance}
			\omega_n^2(m l_0) \neq m^2 \omega_{n_0}^2(l_0), \quad m\in \{3,5, \dots 2N+1\}, \quad \forall n \in \mathbb{N},
		\end{equation}
		for some fixed $N \in \mathbb{N}$. If Assumption \ref{manifoldsass} below is satisfied, then there are $\veps_0 > 0$ and $C > 0$ such
		that for all $\veps \in (0, \veps_0)$  there exist traveling modulating  pulse solutions of the semi-linear wave equation \eqref{model} in the form
		\begin{equation} \label{GSansatz}
			u(x,t) = v(\xi,z,x) \quad \mbox{\rm with} \;\; \xi = x- c_gt, \;\;
			z = l_0 x - \omega t,
		\end{equation}
		where 	$c_g = \omega_{n_0}'(l_0)$, $\omega=\omega_{n_0}(l_0) + \widetilde{\omega} \veps^2$ with  $\widetilde{\omega}=-\text{\rm sgn}(\omega_{n_0}''(l_0)) = -\text{\rm sgn}(\gamma_{n_0}(l_0))$, and \\ $ v \in C^2([-\veps^{-(2N+1)},\veps^{-(2N+1)}],\mathcal{X})$  satisfies
		\begin{equation}
			\label{property-2}
			\ds \sup_{\xi \in [-\veps^{-(2N+1)},~\veps^{-(2N+1)}]} |v(\xi,z,x) - h(\xi,z,x)| \leq C \veps^{2N},
		\end{equation}
		where $\mathcal{X} := 
		H^2_{\rm per}(\mathbb{T},L^2(\mathbb{T})) \cap 
		H^1_{\rm per}(\mathbb{T},H^1_{\rm per}(\mathbb{T})) \cap 
		L^2(\mathbb{T},H^2_{\rm per}(\mathbb{T})) $ with $\mathbb{T} := \mathbb{R} \backslash \{2 \pi \mathbb{Z} \}$. \\The function $h \in C^2(\mathbb{R},\mathcal{X})$ satisfies 
		\begin{equation}
			\label{property-4}
	\lim_{|\xi| \to \infty} h (\xi,z,x) = 0  \quad {\rm and} \quad 
	\ds \sup_{\xi,z,x \in \RR} \left| h (\xi,z,x) - h_{\rm app}(\xi,z,x) \right|
			\leq C \veps^2,
		\end{equation}
		with
		\begin{equation} \label{qq1}
			h_{\rm app} (\xi,z,x) =  \veps \gamma_1 \sech (\veps
			\gamma_2 \xi) f_{n_0} (l_0,x) \ee^{\ii z} + \text{c.c.}.
		\end{equation}
		The constants $\gamma_1, \gamma_2$ are defined in \eqref{gamma-1-2},  and $ f_{n_0}(l_0,\cdot) \in H^2_{\rm per}(\mathbb{T})$ is a solution of 	\eqref{ulm1a}.
	\end{theo}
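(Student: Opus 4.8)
The plan is to recast \eqref{model}, under the ansatz \eqref{GSansatz}, as an (ill-posed) spatial-dynamics evolution in the unbounded variable $\xi$, with values in functions of the two torus variables $z,x$, and to construct the modulating pulse as a reversible orbit shadowing the homoclinic orbit of the stationary NLS equation; the essential difficulty will be to confine, to a finite $\xi$-interval, the effect of the infinitely many neutral directions created by the periodicity in $x$. \emph{Step 1 (spatial dynamics).} Inserting $u(x,t)=v(\xi,z,x)$ with $\xi=x-c_gt$, $z=l_0x-\omega t$ into \eqref{model}, the chain rule gives $\partial_t\to-c_g\partial_\xi-\omega\partial_z$ and $\partial_x\to\partial_\xi+l_0\partial_z+\partial_x$ (the last $\partial_x$ differentiating the explicit torus variable); since $c_g^2-1=\omega_{n_0}'(l_0)^2-1\neq0$ by \eqref{non-degeneracy-2}, the resulting second-order equation in $\xi$ can be solved for $\partial_\xi^2v$, and with $\mathbf U=(v,\partial_\xi v)$ it becomes a first-order system
\[
\partial_\xi\mathbf U=\Lambda(\varepsilon)\,\mathbf U+\mathcal N(\mathbf U),\qquad \Lambda(\varepsilon)=\Lambda_0+\varepsilon^2\Lambda_1+\mathcal O(\varepsilon^4),
\]
on the phase space $\mathcal X\times\mathcal X_1$, where $\mathcal X_1$ is one derivative weaker than $\mathcal X$, the nontrivial entries of $\Lambda_0$ are a second-order and a first-order operator in $(z,x)$, $\Lambda_1$ carries the detuning $\omega=\omega_{n_0}(l_0)+\widetilde\omega\varepsilon^2$, and $\mathcal N$ is cubic. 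Since $\rho,r$ are even, the system is reversible under $\mathbf U(\xi)\mapsto\mathcal R\,\mathbf U(-\xi)$, and we seek a symmetric homoclinic orbit.

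\emph{Step 2 (spectrum of $\Lambda_0$).} Expanding in the Fourier modes $e^{ikz}$, the cubic $\mathcal N$ acts by convolution in $k$; the odd-$k$ subspace is invariant and carries the solution we want, and $\Lambda_0=\bigoplus_{k\ \mathrm{odd}}A_k$, each $A_k$ being governed by an operator pencil attached to the Bloch problem \eqref{ulm1a} at quasimomentum $kl_0$. Three facts matter. (i) For $k=\pm1$ the value $\mu=0$ is an eigenvalue with two-dimensional generalized eigenspace and a Jordan block: this uses $\omega\approx\omega_{n_0}(l_0)$ together with the simplicity \eqref{non-degeneracy-1}, the identity $c_g=\omega_{n_0}'(l_0)$ (which, via a Feynman--Hellmann computation in \eqref{ulm1a}, makes the $\mu$-linear solvability obstruction vanish), and $\omega_{n_0}''(l_0)\neq0$ (which keeps the $\mu$-quadratic obstruction nonzero, so the block has size exactly two); after the $\varepsilon^2$-detuning this pair becomes a hyperbolic pair $\pm\mu_\varepsilon$ with $\mu_\varepsilon=\varepsilon\gamma_2+\mathcal O(\varepsilon^2)$, the directions carrying the soliton. (ii) For odd $k$ with $3\le|k|\le2N+1$, $\mu=0$ is \emph{not} an eigenvalue of $A_k$ --- this is precisely \eqref{non-resonance}. (iii) The remaining spectrum of each such $A_k$ stays at distance $\ge\delta>0$ from $0$, consisting of finitely many hyperbolic eigenvalues and infinitely many purely imaginary ones bounded away from $0$. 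Thus $\bigcup_{k}\mathrm{spec}(A_k)$ is dense on $i\RR$ --- the obstruction to a finite-dimensional center-manifold reduction --- but every truncation $|k|\le2N+1$ retains the spectral gap about $0$.

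\emph{Step 3 (normal form, reduced NLS equation, the homoclinic $h$).} Using the gap in (iii), perform finitely many $\xi$-autonomous near-identity polynomial transformations $\mathbf U\mapsto\mathbf U+\Phi_j(\mathbf U)$ that successively push the coupling of the ``slow'' block (the $k=\pm1$ Jordan blocks, of amplitude $\mathcal O(\varepsilon)$) to the hyperbolic and neutral blocks down to an order $\varepsilon^M$ with $M$ as large as one wishes; each homological equation is solvable because the $A_k$, $|k|\le2N+1$, are invertible off the slow subspace, and since only finitely many harmonics are affected, no small divisors occur --- although the $\Phi_j$ are genuinely infinite-dimensional, so one must verify that they and their inverses act boundedly on the derivative-losing space $\mathcal X$. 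On the transformed slow block, after the rescaling $X=\varepsilon\gamma_2\xi$ and writing the slow amplitude as $\varepsilon\mathcal A(X)+\mathrm{c.c.}$, the reduced equation is, up to an $\mathcal O(\varepsilon^2)$ remainder, the stationary cubic NLS equation
\[
\mathcal A''=\mathcal A-c_\star\,\mathcal A|\mathcal A|^2+\mathcal O(\varepsilon^2),\qquad c_\star>0,
\]
whose $\varepsilon\to0$ limiting homoclinic orbit is exactly $h_{\rm app}$ of \eqref{qq1}; the sign choice $\widetilde\omega=-\mathrm{sgn}(\omega_{n_0}''(l_0))=-\mathrm{sgn}(\gamma_{n_0}(l_0))$ is what makes this orbit homoclinic, and it is nondegenerate by reversibility and $\omega_{n_0}''(l_0)\neq0$. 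Adjoining the hyperbolic directions and discarding the neutral ones, stable- and unstable-manifold theory for $\Lambda(\varepsilon)$ --- where Assumption \ref{manifoldsass} enters --- matched along the reversibility fixed-point subspace produces $h\in C^2(\RR,\mathcal X)$ with $h\to0$ as $|\xi|\to\infty$ and $\sup|h-h_{\rm app}|\le C\varepsilon^2$; this $h$ solves \eqref{model} in the form \eqref{GSansatz} up to an $\mathcal O(\varepsilon^M)$ residual --- the part produced by the neutral modes.

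\emph{Step 4 (finite interval with small tails; the main obstacle).} On $[-L,L]$ with $L=\varepsilon^{-(2N+1)}$, write $v=h+w$. The hyperbolic component of $w$ is controlled by an exponential dichotomy; the neutral component obeys $\partial_\xi w_{\mathrm{osc}}=A_{\mathrm{osc}}w_{\mathrm{osc}}+(\text{forcing})$ with $A_{\mathrm{osc}}$ skew-adjoint --- hence with a unitary propagator --- and with forcing of size $\mathcal O(\varepsilon^M)$, so integrating over the interval gives $\|w_{\mathrm{osc}}\|=\mathcal O(\varepsilon^{M-(2N+1)})$; choosing enough normal-form steps in Step 3 that $M-(2N+1)\ge 2N$ yields the small tails $\sup_{[-L,L]}|v-h|\le C\varepsilon^{2N}$. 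A contraction argument respecting the reversibility then closes the construction of $v$, and undoing the transformations and the ansatz gives the solution claimed in \eqref{GSansatz}--\eqref{qq1}. I expect the crux to be exactly this circle of estimates: organizing the infinite-dimensional normal-form hierarchy so that the NLS dynamics decouples from the dense set of neutral eigenvalues to the required high order in $\varepsilon$ while all bounds stay uniform on $\mathcal X$, and then closing the fixed-point problem on the exponentially (in $N$) long interval so that the neutral tails remain $\mathcal O(\varepsilon^{2N})$ rather than growing --- the trade-off between interval length and tail size, enabled precisely by the non-resonance \eqref{non-resonance} up to harmonic $2N+1$, is the heart of the matter.
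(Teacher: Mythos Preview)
Your overall architecture---spatial dynamics in $\xi$, Fourier decomposition in $z$, identification of the $2\times 2$ Jordan block at $k=\pm1$, normal form to decouple the slow block, persistence of the NLS homoclinic, then closing on a long finite interval with reversibility---matches the paper's proof. The description of Steps~1--3 is accurate, including the role of \eqref{non-degeneracy-1}--\eqref{non-resonance}.

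There is, however, a real gap in your accounting in Step~4. You estimate the oscillatory tail by $\|w_{\rm osc}\|=\mathcal O(\varepsilon^{M-(2N+1)})$ and then propose to take enough normal-form steps that $M\ge 4N+1$. But each normal-form step generates polynomial-in-$(q_0,q_1)$ terms at the next odd harmonic (the cubic couples $m=\pm1$ with the correction at $m=3$ to produce $m=5$, etc.), so pushing the residual to order $\varepsilon^{2M'}$ requires invertibility of $A_m$ for $m=3,5,\dots,2M'-1$. The hypothesis \eqref{non-resonance} only gives this for $m\le 2N+1$, which limits you to $M=2N+2$; with your crude bound this yields merely $\|w_{\rm osc}\|=\mathcal O(\varepsilon)$, far short of $\varepsilon^{2N}$. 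The paper avoids this by exploiting an additional structural fact you have not used: after the $N$ normal-form steps the residual $\varepsilon^{2N+2}Z^{(m)}_{N+1}$ is not uniformly $\mathcal O(\varepsilon^{2N+2})$ but rather $\mathcal O(\varepsilon^{2N+2})$ times a factor bounded by $\|{\bf c}_{0,\rm hom}(\xi)\|+\|{\bf c}_r(\xi)\|$ (it inherits the cubic structure). Since the homoclinic piece decays like $e^{-\alpha\varepsilon|\xi|}$, its $\xi$-integral is $\mathcal O(\varepsilon^{-1})$, not $\mathcal O(\varepsilon^{-(2N+1)})$, and this is what makes the bootstrap $S(\xi_0)\le C(\varepsilon^{2N}+\varepsilon S(\xi_0)+\dots)$ close on $[0,\varepsilon^{-(2N+1)}]$ with only $N$ normal-form steps.

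Two smaller points. First, $A_{\rm osc}$ is not skew-adjoint in general (look at the off-diagonal structure of $A_m$); the boundedness of the center propagator is precisely the content of Assumption~\ref{manifoldsass}, which therefore enters in Step~4 rather than in Step~3 where you placed it. Second, the paper takes $h$ to be the homoclinic orbit of the \emph{two-dimensional} truncated $(q_0,q_1)$-system alone, not of the system with hyperbolic directions adjoined; the hyperbolic and center directions are then handled together in a center-stable manifold construction via variation of constants with mixed endpoint conditions, whose intersection with the reversibility subspace is established separately. Your ``contraction respecting reversibility'' is the right idea but would need to be organized this way to close.
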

	\medskip

	\protect\begin{figure}[htb]

		\begin{picture}(140,160)(140,0)
			\put(35,0){\epsfig{file=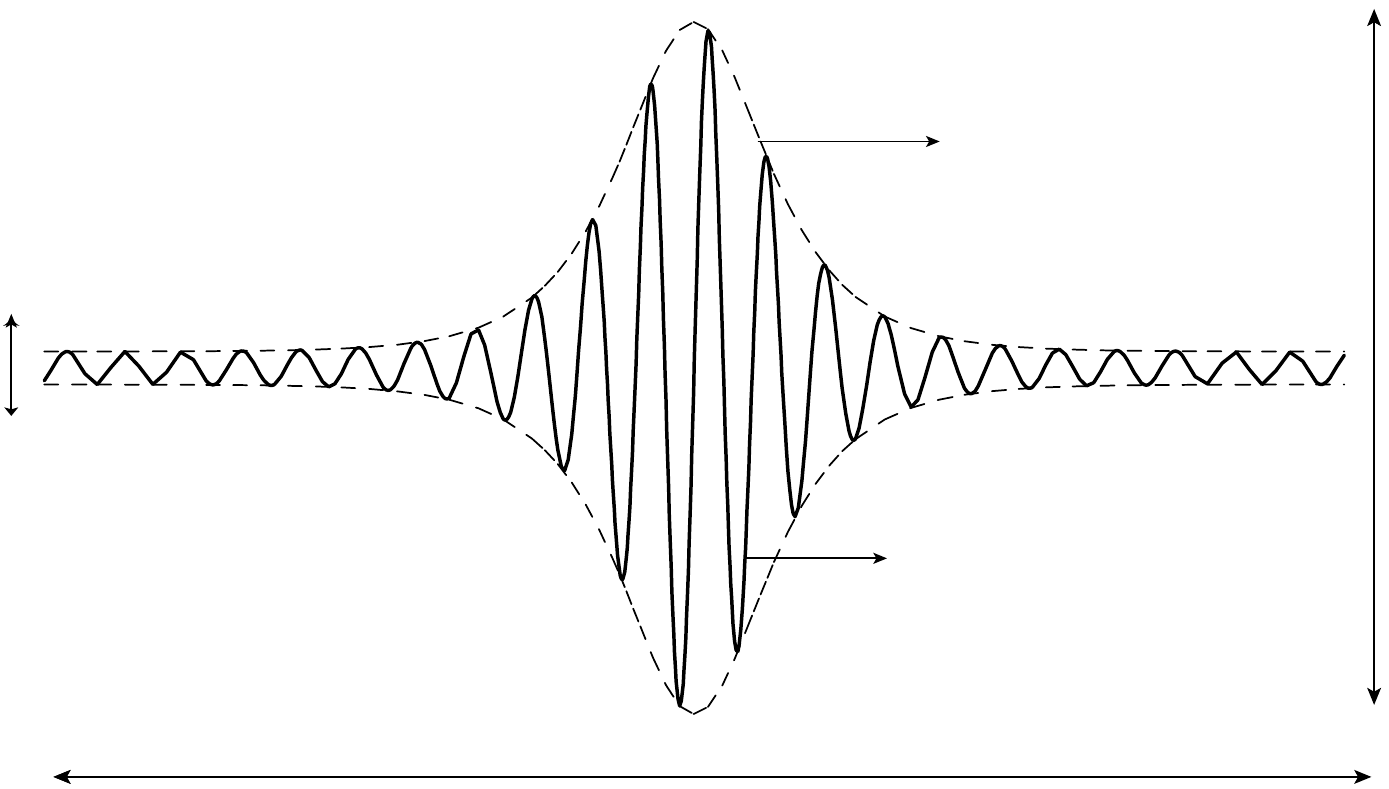,width=11cm,height=3cm} }

			\put(-10,43){$ \mathcal{O}(\varepsilon^{2N}) $}
			\put(355,43){$ \mathcal{O}(\varepsilon) $}
			\put(225,75){$ c_g $}
			\put(220,14){$ c_p$}
			\put(100,10){$\mathcal{O}(\veps^{-(2N+1)})$}
			\put(120,80){$\mathcal{O}(\veps^{-1})$}
			\put(180,95){\vector(1,0){90}}
			\put(200,95){\vector(-1,0){90}}
		\end{picture}
		\caption{A generalized modulating pulse solution as constructed in
			Theorem \ref{thm1} with $\mathcal{O} (\varepsilon^{2N})$ tails
			existing for $x $ in an interval of length
			$\mathcal{O}(\veps^{-(2N+1)})$ with an envelope
			advancing with group velocity $ c_g = \omega'_{n_0}(l_0) $,
			modulating a carrier wave advancing with phase velocity $ c_p  =  \omega_{n_0}(l_0) /l_0$,
			and leaves behind the standing periodic Bloch wave.
			The wavelength of the carrier wave and the period of the coefficients $\rho$, $r$ are of a comparable order.
		}
		\label{fig5}
		\protect\end{figure}

	\begin{remark}
		Assumption \ref{manifoldsass}  is of technical nature and guarantees the existence of infinite-dimensional invariant manifolds in the construction of the modulating pulse solutions. It is satisfied, for instance, if eigenvalues
		of the linearized operators on and near $\ii \mathbb{R}\setminus \{0\}$ are  semi-simple. An extended result can be obtained in the case of double eigenvalues, see Remark \ref{R:Jord-2} below.
	\end{remark}

	\begin{remark}
		The function $h$ solves a second-order differential equation that is
		an $\mathcal{O}(\veps)$-perturbation of the stationary NLS equation.
		We select $h$ to be a homoclinic orbit with  exponential decay to $0$ at infinity
		that is $\mathcal{O}(\veps^2)$-close to the NLS approximation (\ref{qq1}), cf. \eqref{property-4}, computed at the pulse solution (\ref{NLS-soliton}) for $\tilde{c} = 0$ and $\widetilde{\omega}=-\text{\rm sgn}(\omega_{n_0}''(l_0)) = -\text{\rm sgn}(\gamma_{n_0}(l_0))$.
	\end{remark}

	\begin{remark}
		If the non-resonance condition \eqref{non-resonance} is satisfied for all odd $m \geq 3$, then $N$ can be chosen arbitrarily large, but has to be fixed. The result of \cite{GS05} was
		improved in \cite{GS08} to exponentially small tails and
		exponentially long time intervals w.r.t.\ $\varepsilon$. It is not
		obvious that the exponential  smallness result can be transferred to the spatially periodic case.  We also do not use the Hamiltonian setup from
		\cite{GS01} because it is not clear how the Hamiltonian structure of
		the semi-linear wave equation \eqref{model} can be developed
		in the spatial dynamics formulation.
	\end{remark}

	The solution $v$ of Theorem \ref{thm1} is only defined on a large but finite spatial interval for the semi-linear wave equation (\ref{model}). However, due to the finite speed of propagation in the semi-linear wave equation (\ref{model}), it is also an approximate solution of the initial-value problem for a very long time, on a very large, but shrinking, spatial domain. The corresponding result is given by the following theorem.

	\begin{theo}
		\label{theorem-time}
		Let $v$ be the solution of Theorem \ref{thm1} and take an arbitrary function \\ $\phi\in C^2(\mathbb{R}\setminus[-\veps^{-(2N+1)},\veps^{-(2N+1)}],\mathcal{X})$ such that
		$$
		v_{\rm{ext}}(\xi,z,x) := \begin{cases}
			v(\xi,x,z), & (\xi,x,z) \in [-\veps^{-(2N+1)},\veps^{-(2N+1)}] \times \mathbb{R}\times \mathbb{R},\\
			\phi(\xi,x,z), & (\xi,x,z) \in (\mathbb{R}\setminus [-\veps^{-(2N+1)},\veps^{-(2N+1)}]) \times \mathbb{R}\times \mathbb{R},
		\end{cases}
		$$
		satisfies  $v_{\rm{ext}}\in C^2(\mathbb{R},\mathcal{X})$. Let
		$$
		u_0(x) := v_{\rm{ext}}(x,\ell_0 x,x) \quad \mbox{\rm and} \quad
		u_1(x) := -c_g \partial_{\xi} v_{\rm ext}(x,\ell_0 x,x) - \omega \partial_z v_{\rm ext}(x,\ell_0 x,x).
		$$
The corresponding solution of the semi-linear wave equation \eqref{model}
		with $u(\cdot,0) = u_0$ and $\partial_t u(\cdot,0) = u_1$ satisfies
		$$
		u(x,t) = v(x-c_g t, l_0 x - \omega t, x)
		$$
		for all $(x,t)\in  [-\veps^{-(2N+1)}, \veps^{-(2N+1)}] \times (0,\infty)$ such that $|x| + t <\veps^{-2N+1}$.
	\end{theo}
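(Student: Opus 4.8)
The plan is to obtain this purely from \emph{finite speed of propagation} for \eqref{model}, whose characteristic speed equals $1$. By Theorem~\ref{thm1}, on the slab $S:=\{(x,t)\in\RR^2:\ x-c_gt\in[-\veps^{-(2N+1)},\veps^{-(2N+1)}]\}$ the function $\tilde u(x,t):=v(x-c_gt,\,l_0x-\omega t,\,x)$ is an exact solution of \eqref{model}. Since $v_{\rm ext}=v$ on the slice $x\in[-\veps^{-(2N+1)},\veps^{-(2N+1)}]$, the chosen Cauchy data are precisely $u_0=\tilde u(\cdot,0)$ and $u_1=\partial_t\tilde u(\cdot,0)$ restricted to that interval. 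First I would record that $\mathcal{X}\hookrightarrow H^2(\TT^2)\hookrightarrow C(\TT^2)$ and, directly from the definition of $\mathcal{X}$, that $\partial_zv,\partial_xv\in H^1(\TT^2)$; together with the trace of $H^1(\TT^2)$ on the transverse line $z=l_0x$ (here $l_0>0$ is used) this makes the restrictions defining $u_0,u_1$ meaningful and gives $(u_0,u_1)\in H^1_{\rm loc}(\RR)\times L^2_{\rm loc}(\RR)$. Let $u$ be the maximal finite-energy solution of \eqref{model} with these data; the claim is that $u=\tilde u$ on a light-cone region.

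Next I would fix a target point $(x_0,t_0)$ with $t_0>0$ and $|x_0|+t_0<\veps^{-2N+1}$ and consider the backward solid light cone $\Lambda:=\{(y,s):\ 0\le s\le t_0,\ |y-x_0|\le t_0-s\}$ over it. For $(y,s)\in\Lambda$ one has $|y|\le|x_0|+t_0-s$, hence $|y-c_gs|\le\max(1,|c_g|)(|x_0|+t_0)$; since $c_g=\omega_{n_0}'(l_0)$ is a fixed finite number different from $\pm1$ and $\veps^{-2N+1}/\veps^{-(2N+1)}=\veps^{2}\to0$, for $\veps$ small this is $<\veps^{-(2N+1)}$. Therefore $\Lambda\subset S$, so $\tilde u$ solves \eqref{model} throughout $\Lambda$, while $\Lambda\cap\{s=0\}\subset(-\veps^{-(2N+1)},\veps^{-(2N+1)})$, where $u,\partial_tu$ coincide with $\tilde u,\partial_t\tilde u$. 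The same bounds give $x_0\in[-\veps^{-(2N+1)},\veps^{-(2N+1)}]$ and $x_0-c_gt_0\in[-\veps^{-(2N+1)},\veps^{-(2N+1)}]$, so the asserted identity is well posed.

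Then I would run the classical light-cone energy argument for $w:=u-\tilde u$, which on $\Lambda$ satisfies the linear equation $\partial_t^2w-\partial_x^2w+\rho w=q\,w$ with $q:=\gamma r(u^2+u\tilde u+\tilde u^2)$ and has vanishing Cauchy data on $\Lambda\cap\{s=0\}$. Since $v\in C^2([-\veps^{-(2N+1)},\veps^{-(2N+1)}],\mathcal{X})$ and $\mathcal{X}\hookrightarrow L^\infty(\TT^2)$, $\tilde u$ is bounded on $\Lambda$, and a continuity/bootstrap argument based on local well-posedness of \eqref{model} shows that $u$ exists on all of $\Lambda$ and that $q$ is bounded there. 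With $E(s):=\tfrac12\int_{|y-x_0|<t_0-s}\big(|\partial_tw|^2+|\partial_xw|^2+\rho w^2\big)(y,s)\,dy$, differentiating and integrating by parts yields $E'(s)\le C\,E(s)$, the boundary terms on the mantle of $\Lambda$ being nonpositive and $C$ depending only on $\|q\|_{L^\infty(\Lambda)}$ and $\rho_0$; since $E(0)=0$, Gr\"onwall forces $E\equiv0$, hence $w\equiv0$ on $\Lambda$ and in particular $u(x_0,t_0)=v(x_0-c_gt_0,l_0x_0-\omega t_0,x_0)$. As $(x_0,t_0)$ ranges over the stated region, this is the theorem.

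The energy estimate is routine; the two ingredients it rests on are the real obstacles. First, for $\gamma=1$ the cubic nonlinearity is focusing, so global existence of $u$ is not automatic — instead one closes a bootstrap, using that $w=0$ (hence $u=\tilde u$) on the portion of $\Lambda$ already reached keeps $\|u\|_{L^\infty}$ locally bounded and rules out blow-up before the apex. Second, $v$ has only $\mathcal{X}$-valued $C^2$ regularity, which is why one works in the finite-energy class and justifies the traces defining $u_0,u_1$ through $\mathcal{X}\hookrightarrow H^2(\TT^2)$ and the restriction $H^1(\TT^2)\hookrightarrow H^{1/2}\subset L^2$ on the line $z=l_0x$; the $2\pi$-periodicity of $v$ in its $z$- and $x$-arguments is what makes $\tilde u$ single-valued as a function of $(x,t)$ on $S$.
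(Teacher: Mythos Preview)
Your proposal is correct and follows essentially the same route as the paper: both arguments fix a point $(x_0,t_0)$ in the stated region, set $w:=u-\tilde u$ on the backward light cone $\Lambda$, derive the energy inequality $E'(s)\le C\,E(s)$ from the difference equation $\partial_t^2w-\partial_x^2w+\rho w=\gamma r(u^2+u\tilde u+\tilde u^2)w$ with $E(0)=0$, and conclude via Gr\"onwall. The paper packages the linear energy inequality and the uniqueness step as two separate lemmas, whereas you run them together; you are also a bit more explicit about the regularity bookkeeping (the embedding $\mathcal{X}\hookrightarrow L^\infty(\TT^2)$ and the trace on $z=l_0x$), which the paper dispatches with a brief density remark.
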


	\begin{remark}
		By Theorem \ref{theorem-time}, the modulated pulse solutions are approximated by $h_\text{app}$ much longer than on
		the $ \mathcal{O}(\veps^{-2}) $-time scale guaranteed by the approximation theorem given in \cite{BSTU06}. For instance, on the spatial interval $[-\tfrac{1}{2}\veps^{-(2N+1)}, \tfrac{1}{2}\veps^{-(2N+1)}]$ the approximation holds up to time $t=\tfrac{1}{2}\veps^{-(2N+1)}$.
	\end{remark}

	We shall describe the strategy of the proof of Theorems \ref{thm1} and \ref{theorem-time}. As  in \cite{GS01,GS05,GS08} the construction of the modulating pulse  solutions is based on a  combination of spatial dynamics, normal form transformations,  and invariant manifold theory. Plugging the ansatz \eqref{GSansatz} into \eqref{model}, we obtain an
	evolutionary system w.r.t. the unbounded space variable $\xi$, the spatial dynamics formulation, i.e.,
	we obtain a system of the form
	\begin{equation} \label{A10:SDF}
		\partial_{\xi} \widetilde{u}= M(\partial_z,\partial_x,x)\widetilde{u} + \widetilde{N}(\partial_z,\partial_x,x,\widetilde{u}),
	\end{equation}
	with $ M  \widetilde{u}$ linear and $ N $ nonlinear in $ \widetilde{u} $ which is a vector containing $ v $
	and derivatives of $ v $.
	For all values of the bifurcation parameter $ 0 < \varepsilon \ll 1 $ there are infinitely many eigenvalues of $M(\partial_z,\partial_x,x)$ on the imaginary axis, cf. Figure~\ref{newpic1}, and hence the center manifold reduction is of no use. However, the system is of the form
	\begin{align}
		\partial_{\xi} \widetilde{u}_0 & = M_0\widetilde{u}_0 + \widetilde{N}_0(\widetilde{u}_0,\widetilde{u}_r),  \label{cm1}\\
		\partial_{\xi} \widetilde{u}_r & = M_r\widetilde{u}_r + \widetilde{N}_r(\widetilde{u}_0,\widetilde{u}_r) + H_r(\widetilde{u}_0), \label{cm2}
	\end{align}
	where $ \widetilde{u}_0 $ is a vector in $\mathbb{C}^2$ corresponding to the eigenvalues of $ M $ which are close to zero and where $ \widetilde{u}_r $ corresponds to the infinite-dimensional remainder, i.e., to all the  eigenvalues of $M$ which are bounded away from zero for small $ |\varepsilon| $.
	For $ \varepsilon = 0 $ all eigenvalues of $ M_0 $ are zero.
	The nonlinearity in the $ \widetilde{u}_r $-equation is split into two parts such that $ \widetilde{N}_r(\widetilde{u}_0,0)=0 $.
	By finitely many normal form transformations in the $ \widetilde{u}_r $-equation
	we can achieve
	that the remainder term $H_r$ in \eqref{cm2} has the property
	$ H_{r}(\widetilde{u}_{0}) = \mathcal{O}(|\widetilde{u}_{0}|^{2N+2}) $
	where $ N $ is an arbitrary, but  fixed number, if certain non-resonance conditions are satisfied, cf. Remark \ref{rem37}.
	Concerning orders of $\varepsilon$, we have
	$ \widetilde{u}_{0} = \mathcal{O}(\varepsilon) $ and $ \widetilde{u}_{r} = \mathcal{O}(\varepsilon^{2N+2}) $. Hence, the finite-dimensional subspace
	$ \{\widetilde{u}_r = 0\} $ is  approximately invariant, and setting the highest order-in-$\varepsilon$ term $H_r(\tilde u_0)=\mathcal{O}(\varepsilon^{2N+2})$ to $0$,
	we obtain the reduced system
	$$
	\partial_{\xi} \widetilde{u}_{0} =  M_{0}\widetilde{u}_{0} + \widetilde{N}_{0}(\widetilde{u}_{0},0).
	$$
	For the reduced system, a homoclinic solution inside the subspace $\{\widetilde{u}_r=0\}$ can be found, which bifurcates with respect to $\varepsilon$ from the trivial solution. The  persistence of this solution for the system \eqref{cm1}-\eqref{cm2} cannot be expected, since the finite-dimensional subspace
	$ \{\widetilde{u}_r = 0\} $ is  not truly invariant for \eqref{cm1}-\eqref{cm2}, and therefore the necessary intersection of the stable and unstable manifolds is unlikely to happen in an infinite-dimensional phase space. However, the approximate homoclinic orbit can be used to prove that the center-stable manifold intersects the fixed space of reversibility transversally which in the end allows us to
	construct a modulating pulse solution with the properties stated in Theorem \ref{thm1}.

	The proof of Theorem \ref{theorem-time} is based on the energy method in the backward light cone associated to the point $(x_0,t_0)$. The point is arbitrarily chosen in the upper half-plane inside applicability of the solution $ v $ obtained in Theorem \ref{thm1}, that is, for $\xi \in [-\veps^{-(2N+1)}, \veps^{-(2N+1)}]$.

	\vspace{0.25cm}

	{\bf Organization of the paper.}  In Section \ref{sec-2} we introduce  the spatial dynamics formulation by using  Fourier series and Bloch modes. We develop near-identity transformations in Section \ref{sec-3} for reducing the size of the tails and increasing the size of the spatial domain. A local center-stable manifold in the spatial dynamics problem is constructed in Section \ref{sec-4}. The proof of Theorem \ref{thm1} is completed in Section \ref{sec-5} by establishing an intersection of the center-stable manifold with the fixed space of reversibility. Theorem \ref{theorem-time} is proven in Section \ref{sec-6}.

	\vspace{0.25cm}

	{\bf Acknowledgement.}  The work of Dmitry E. Pelinovsky is partially supported by AvHumboldt Foundation. The work of Guido Schneider is partially supported by the Deutsche Forschungsgemeinschaft DFG through the SFB 1173 ''Wave phenomena'' Project-ID 258734477.

	\section{Spatial dynamics formulation}
	\label{sec-2}

	Here we introduce  the spatial dynamics formulation by using  Fourier series and Bloch modes.
	We fix $l_0 \in \mathbb{B}$ and define
	\begin{equation}
		\label{change-variables}
		u(x,t) = v(\xi,z,x) \quad \mbox{\rm with} \;\;  \xi = x- ct,
		\;\;  z = l_0 x- \omega t,
	\end{equation}
	where $\omega$ and $c$ are to be determined and $v(\xi,\cdot,\cdot)$ satisfies
	$$
	v(\xi,z+2\pi,x) = v(\xi,z,x+2\pi) = v(\xi,z,x), \quad \forall (\xi,z,x) \in \mathbb{R}^3.
	$$
	Inserting (\ref{change-variables}) into the semi-linear wave equation \eqref{model} and using the chain rule, we obtain a new equation for $v$:
	\begin{align}
		\nonumber
		& \left[ (c^2-1)\partial_\xi^2
		+ 2 (c \omega - l_0) \partial_{\xi} \partial_z
		-2\partial_\xi\partial_x
		+ (\omega^2 - l_0^2) \partial_z^2
		- 2l_0 \partial_z \partial_x - \partial_x^2 \right] v(\xi,z,x) \\
		& \qquad + \rho(x) v(\xi,z,x) = \gamma r(x) v(\xi,z,x)^3, \quad \xi \in \RR, \quad x,z\in [0,2\pi)_{\rm per}.
		\label{eq3}
	\end{align}
	In order to consider this equation as an evolutionary system
	with respect to $\xi  \in \mathbb{R}$, we use  Fourier
	series in $z$
	\begin{equation}
		\label{Fourier}
		v(\xi,z,x) = \sum_{m \in \mathbb{Z}} \tilde{v}_{m}(\xi,x)
		e^{\ii m z}, \quad
		\tilde{v}_{m}(\xi,x) = \frac{1}{2\pi} \int_{0}^{2\pi}
		v(\xi,z,x)  e^{-\ii m z} dz.
	\end{equation}
	Equation \eqref{eq3} is converted
	through the Fourier expansion (\ref{Fourier}) into the spatial dynamics system
	for every $c \neq \pm 1$:
	\begin{equation}
		\label{spat-dyn}
		\partial_\xi
		\left( \begin{array}{c}
			\tilde{v}_m  \\
			\tilde{w}_m \end{array}
		\right) = A_m(\omega,c) \left( \begin{array}{c}
			\tilde{v}_m  \\
			\tilde{w}_m \end{array}
		\right) -\gamma (1-c^2)^{-1}  \left( \begin{array}{c}
			0  \\  r(x)
			(\tilde{v} \ast \tilde{v}\ast \tilde{v})_m \end{array}
		\right),
	\end{equation}
	for $ \xi \in \mathbb{R} $, $ m \in \mathbb{Z} $, $ x \in \mathbb{T} = \mathbb{R} \backslash \{ 2\pi \mathbb{Z} \} $,
	where $\tilde{w}_m := \partial_{\xi} \tilde{v}_m$, $A_m(\omega,c)$ is defined by
	$$
	A_m(\omega,c) = \left( \begin{array}{cc}
		0 & 1 \\
		(1-c^2)^{-1} [-(\partial_x + \ii m l_0)^2 + \rho(x) - m^2 \omega^2] &
		2 (1-c^2)^{-1} [\ii m c \omega - (\partial_x + \ii m l_0)] \end{array}
	\right)
	$$
	and	the double convolution sum is given by
	$$
	(\tilde{v} \ast \tilde{v}\ast \tilde{v})_m := \sum_{m_1,m_2 \in \Z} \tilde{v}_{m_1}  \tilde{v}_{m_2}  \tilde{v}_{m-m_1-m_2}.
	$$
	The dynamical system (\ref{spat-dyn}) can also be written in the scalar form as
	\begin{equation}
		[ (c^2-1) \partial_{\xi}^2
		+ 2 \ii m c \omega \partial_{\xi} -2 (\partial_x  + \ii m l_0) \partial_{\xi} - m^2 \omega^2 - (\partial_x + \ii m l_0)^2 + \rho(x) ]
		\tilde{v}_m = \gamma r(x) (\tilde{v} \ast \tilde{v}\ast \tilde{v})_m.
		\label{eq3_fourrier}
	\end{equation}

	\begin{remark}
		If $\rho \in L^{\infty}_\text{per}(\mathbb{T})$, then the domain $\tilde{D}$ and the range $\tilde{R}$ of the linear operator $A_m(\omega,c) : \tilde{D}  \subset \tilde{R} \to \tilde{R}$ are given by
		\begin{align}
			\label{domain-range}
			\tilde{D} = H^2_\text{per}(\mathbb{T})\times H^1_\text{per}(\mathbb{T}), \qquad
			\tilde{R} = H^1_\text{per}(\mathbb{T})\times L^2(\mathbb{T}).
		\end{align}
		Solutions of the dynamical system (\ref{spat-dyn}) are then sought such that at each $\xi \in \mathbb{R}$ they lie in
		the phase space
		\begin{align}
			\cD:=\{(\tilde{v}_m, \tilde{w}_m)_{m \in \ZZ}  & \in  (\ell^{2,2}(\mathbb{Z},L^2(\mathbb{T}))
			\cap \ell^{2,1}(\mathbb{Z},H^1_{\rm per}(\mathbb{T}))
			\cap \ell^{2,0}(\mathbb{Z},H^2_{\rm per}(\mathbb{T})) )
			\notag \\
			& \quad \times ( \ell^{2,1}(\mathbb{Z},L^2(\mathbb{T}))
			\cap \ell^{2,0}(\mathbb{Z},H^1_{\rm per}(\mathbb{T}))) \}, \label{phase-space}
		\end{align}
		with the range in
		\begin{align}
			\cR :=\{(\tilde{f}_m, \tilde{g}_m)_{m \in \ZZ} \in (\ell^{2,1}(\mathbb{Z},L^2(\mathbb{T})) \cap  \ell^{2,0}(\mathbb{Z},H^1_{\rm per}(\mathbb{T}))) \times \ell^{2,0}(\mathbb{Z},L^2(\mathbb{T}))\},
			\label{range}
		\end{align}
		where $\ell^{2,k}(\mathbb{Z},H^s)$, with $k \in \mathbb{N}$, is a  weighted $\ell^2$-space
		equipped with
		the norm
		$$ \|(\tilde{v}_m)_{m \in \ZZ} \|_{\ell^{2,k}(\mathbb{Z},H^s)} = \left(\sum_{m \in \ZZ}   \|\tilde{v}_m \|_{H^s}^2 (1 + m^2)^{k} \right)^{1/2}.
		$$
		The solution map for the initial-value problem associated to the dynamical system (\ref{spat-dyn}) is then defined as $[0,\xi_0] \ni \xi \mapsto  (\tilde{v}_m, \tilde{w}_m)_{m \in \ZZ} \in C^1([0,\xi_0],\mathcal{D})$. The phase space $\mathcal{D}$ in (\ref{phase-space}) is equivalent to the function space $\mathcal{X}$ in Theorem \ref{thm1} under the Fourier series (\ref{Fourier}).
	\end{remark}

	\begin{remark}
		Real solutions $v = v(\xi,z,x)$
		after the Fourier expansion (\ref{Fourier}) enjoy the symmetry:
		\begin{equation}
			\label{E:k-sym}
			\tilde{v}_{-m}(\xi,x) = \overline{\tilde{v}}_{m}(\xi,x), \quad \forall m \in \Z, \;\; \forall (\xi,x) \in \mathbb{R}^2.
		\end{equation}
		The cubic nonlinearity maps the space of Fourier series  where only the odd Fourier modes are non-zero to the same space. Hence, we can look for solutions of the spatial dynamics system (\ref{spat-dyn}) in the subspace
		$$
		\cD_{\rm{odd}} := \{ (\tilde{v}_m, \tilde{w}_m)_{m \in \ZZ} \in \cD: \quad
		\tilde{v}_{2m} = \tilde{w}_{2m} = 0,\quad    \tilde{v}_{-m} = \overline{\tilde{v}}_{m},\quad    \tilde{w}_{-m} = \overline{\tilde{w}}_{m}, \quad \forall m\in\ZZ\}.
		$$
		Hence the components $(\tilde{v}_m, \tilde{w}_m)$ for $-m \in \NN_{\rm odd}$ can be obtained from the components $(\tilde{v}_m, \tilde{w}_m)$ for $m \in \NN_{\rm odd}$ by using the symmetry (\ref{E:k-sym}).
	\end{remark}

	\subsection{Linearized Problem}
	\label{S:lin_prob}

	Truly localized modulating pulse solutions satisfy
	$$
	\lim_{\xi \to \pm \infty} v(\xi,z,x)  = 0,
	$$
	i.e., such solutions are homoclinic to the origin with respect to
	the evolutionary variable $\xi$. If these solutions exist,
	they lie in the intersection of the stable
	and unstable manifold of the origin. However, the
	modulating pulse solutions are not truly localized
	because of the existence of the infinite-dimensional center manifold
	for the spatial dynamics system (\ref{spat-dyn}).

	The following lemma characterizes zero eigenvalues $\lambda$ of the operators $A_m(\omega_0,c_g)$, where $\omega_0 =\omega_{n_0}(l_0)$ and $c_g =\omega'_{n_0}(l_0)$.

	\begin{lemma}
		\label{lem-linear}
		Fix $N \in \mathbb{N}$. Under the non-degeneracy and the non-resonance assumptions (\ref{non-degeneracy-1}), (\ref{non-degeneracy-2}), and (\ref{non-resonance}) the operator $A_m(\omega_0,c_g)$ with $m \in \{1,3,\cdots,2N+1\}$ has a zero eigenvalue if and only if $m=1$. The zero eigenvalue is algebraically double and geometrically simple.
	\end{lemma}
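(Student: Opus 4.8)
The plan is to separate the two claims: first I would decide, for each odd $m\in\{1,\dots,2N+1\}$, whether $A_m(\omega_0,c_g)$ has a kernel, and then, for $m=1$, pin down the Jordan structure by reducing to the scalar spectral function of the associated operator pencil. \emph{Step 1 (kernels).} Since $A_m(\omega,c)$ is block lower triangular with the identity in the $(1,2)$-block, a pair $(\tilde v_m,\tilde w_m)\in H^2_{\rm per}(\mathbb T)\times H^1_{\rm per}(\mathbb T)$ belongs to $\ker A_m(\omega_0,c_g)$ if and only if $\tilde w_m=0$ and $[-(\partial_x+\ii m l_0)^2+\rho(x)-m^2\omega_0^2]\,\tilde v_m=0$, i.e.\ if and only if $m^2\omega_{n_0}^2(l_0)$ is an eigenvalue of the Bloch operator $\mathcal B(m l_0):=-(\partial_x+\ii m l_0)^2+\rho(x)$ on $L^2_{\rm per}(\mathbb T)$, equivalently $\omega_n^2(ml_0)=m^2\omega_{n_0}^2(l_0)$ for some $n\in\NN$. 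For $m\in\{3,5,\dots,2N+1\}$ this is excluded by \eqref{non-resonance}, so these operators have no zero eigenvalue. For $m=1$, however, $\omega_{n_0}^2(l_0)$ is an eigenvalue of $\mathcal B(l_0)$, and by \eqref{non-degeneracy-1} it is simple (all other $\omega_n(l_0)$, hence all other $\omega_n^2(l_0)$, differ from $\omega_{n_0}(l_0)$). Hence $\ker A_1(\omega_0,c_g)$ is one-dimensional, spanned by $(f_{n_0}(l_0,\cdot),0)$, so the zero eigenvalue of $A_1$ is geometrically simple.

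\emph{Step 2 (operator pencil).} For the algebraic multiplicity I would pass to the pencil obtained by substituting $\partial_\xi\mapsto\lambda$ in the scalar form \eqref{eq3_fourrier} with $m=1$, $\omega=\omega_0$, $c=c_g$. Writing $D:=\partial_x+\ii l_0$ and completing the square one finds
\[
P_1(\lambda):=(c_g^2-1)\lambda^2+2\ii c_g\omega_0\lambda-2D\lambda-\omega_0^2-D^2+\rho(x)=\mathcal B(l_0-\ii\lambda)+(c_g\lambda+\ii\omega_0)^2
\]
on $H^2_{\rm per}(\mathbb T)$, with $\mathcal B(\cdot)$ continued analytically to complex Bloch parameter. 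Since $\omega_{n_0}^2(l_0)$ is a simple isolated eigenvalue of $\mathcal B(l_0)$, analytic perturbation theory provides a simple eigenvalue $\mu(\lambda)=\omega_{n_0}^2(l_0-\ii\lambda)$ of $\mathcal B(l_0-\ii\lambda)$, analytic near $\lambda=0$, with rank-one Riesz projection $\Pi(\lambda)$; as $P_1(\lambda)$ differs from $\mathcal B(l_0-\ii\lambda)$ only by a scalar it commutes with $\Pi(\lambda)$ and is block diagonal, equal to multiplication by $F(\lambda):=\mu(\lambda)+(c_g\lambda+\ii\omega_0)^2$ on $\mathrm{Ran}\,\Pi(\lambda)$ and invertible on $\mathrm{Ran}(I-\Pi(\lambda))$ for small $\lambda$. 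By the standard equivalence between an operator and its characteristic function under the companion linearization, the algebraic multiplicity of $\lambda=0$ as an eigenvalue of $A_1(\omega_0,c_g)$ then equals the order of vanishing of $F$ at $\lambda=0$, and in particular $\lambda=0$ is isolated in the spectrum of $A_1$.

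\emph{Step 3 (order of the zero).} It remains to Taylor-expand $F$. Differentiating the Bloch eigenvalue relation in the Bloch parameter (Feynman--Hellmann) gives $(\omega_{n_0}^2)'(l_0)=2\omega_0 c_g$ and $(\omega_{n_0}^2)''(l_0)=2c_g^2+2\omega_0\,\omega_{n_0}''(l_0)$, hence $F(0)=\omega_{n_0}^2(l_0)-\omega_0^2=0$, $F'(0)=-\ii(\omega_{n_0}^2)'(l_0)+2\ii c_g\omega_0=0$, and $F''(0)=-(\omega_{n_0}^2)''(l_0)+2c_g^2=-2\omega_0\,\omega_{n_0}''(l_0)$, which is nonzero by \eqref{non-degeneracy-2} and $\omega_0=\omega_{n_0}(l_0)>0$; note also that $c_g=\omega_{n_0}'(l_0)\neq\pm1$ (again from \eqref{non-degeneracy-2}) makes $P_1$ genuinely quadratic and $A_1$ well defined. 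Thus $F$ vanishes to order exactly two at $\lambda=0$, so the zero eigenvalue of $A_1(\omega_0,c_g)$ is algebraically double, which completes the proof.

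\emph{Main obstacle.} Steps 1 and 3 are elementary; the delicate point is the identification in Step 2 of the algebraic multiplicity of the unbounded operator $A_1$ with the order of the zero of the scalar $F$, which rests on the Riesz-projection reduction of $P_1(\lambda)$ to its one-dimensional critical block and on the equivalence, up to an invertible analytic factor, between $\lambda I-A_1$ and $P_1(\lambda)$. A self-contained alternative is to avoid the pencil and build the Jordan chain directly: $A_1 v_1=(f_{n_0}(l_0,\cdot),0)$ is solvable precisely because of the group-velocity identity $\langle D f_{n_0},f_{n_0}\rangle=\ii\omega_0 c_g$, whereas $A_1 v_2=v_1$ is not solvable, the solvability obstruction being a nonzero multiple of $\omega_{n_0}''(l_0)$; this reproduces the same computation at the cost of more second-order Bloch-perturbation bookkeeping. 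It is worth noting that $F''(0)$ is, up to the factor $-2\omega_0$, precisely the NLS dispersion coefficient $\omega_{n_0}''(l_0)$ of \eqref{rr1}, which is why \eqref{non-degeneracy-2} is the natural hypothesis here.
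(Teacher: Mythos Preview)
Your proof is correct and takes a genuinely different route from the paper's. The paper argues by directly building the Jordan chain: it exhibits the eigenvector $F_0=(f_{n_0},0)^T$ and the generalized eigenvector $F_1=(-\ii\partial_l f_{n_0},f_{n_0})^T$ via the derivative-in-$l$ identities for the Bloch eigenpair (equations \eqref{bloch-1}--\eqref{bloch-3}), then computes the adjoint $A_1^*$, its kernel $G_0$, and terminates the chain by checking $\langle G_0,F_1\rangle=1\neq0$ through the relation $\omega_0\omega_{n_0}''(l_0)+c_g^2-1+\cdots=0$. This is exactly the ``self-contained alternative'' you sketch at the end.

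Your pencil argument via $P_1(\lambda)=\mathcal B(l_0-\ii\lambda)+(c_g\lambda+\ii\omega_0)^2$ and the scalar reduction $F(\lambda)=\omega_{n_0}^2(l_0-\ii\lambda)+(c_g\lambda+\ii\omega_0)^2$ is cleaner conceptually and makes transparent why the nondegeneracy condition $\omega_{n_0}''(l_0)\neq0$ is exactly what forces the algebraic multiplicity to be two; the computation $F''(0)=-2\omega_0\omega_{n_0}''(l_0)$ is the same invariant the paper extracts, just packaged differently. The cost is the appeal to the Keldysh/Gohberg--Sigal type equivalence between the companion linearization $A_1$ and the quadratic pencil $P_1$, together with the Riesz-projection block diagonalization, which you correctly flag as the one non-elementary step. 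The paper's hands-on construction, while more laborious, has a practical payoff: the explicit vectors $F_0,F_1$ and their adjoint counterparts $G_0,G_1$ are reused throughout Sections~\ref{sec-3}--\ref{sec-4} to define the projection $\Pi$ and to carry out the normal-form transformations, so in context the paper needs those formulas anyway.
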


	\begin{proof}
		Let $m\in \NN_{\rm odd}$. The eigenvalue problem $A_m(\omega_0, c_g) \bspmb V\\W\espmb=\lambda \bspmb V\\W\espmb$ can be reformulated in the scalar form:
		\begin{equation}
			[ -(\partial_x + \ii m l_0 + \lambda )^2 + \rho(x)]
			V(x) = (m\omega_0 - \ii c_g \lambda)^2 V(x).
			\label{linear-eq}
		\end{equation}
		Eigenvalues $\lambda$ are obtained by setting $V(x)=f_n(ml_0 -\ii \lambda,x)$ and using the spectral problem (\ref{ulm1a}) for $l\in \CC$, where both $f_n(l,x)$ and $\omega_n(l)$ are analytically continued in $l \in \CC$. The eigenvalues are the roots of the nonlinear equations
		\begin{equation} \label{speclin0}
			\omega_n^2(m l_0 - \ii \lambda) = (m\omega_0 - \ii c_g \lambda)^2, \quad \quad n \in \NN.
		\end{equation}
		Zero eigenvalues $\lambda = 0$ exist if and only if
		there exist solutions of the nonlinear equations $\omega^2_n(m l_0) = m^2 \omega_0^2$. Since $\omega_0 = \omega_{n_0}(l_0)$,
		$\omega_n^2(m l_0) = m^2 \omega_0^2$ is satisfied for $m = 1$ and $n = n_0$. Due to the non-degeneracy assumption (\ref{non-degeneracy-1}),  $\omega^2_n(l_0) = \omega_0^2$ does not hold for any other $n$. This shows the geometric simplicity of $ \lambda = 0 $ for $m = 1$. It follows from (\ref{non-degeneracy-1}) and (\ref{non-resonance}) that no other solutions of $\omega^2_n(m l_0) = m^2 \omega_0^2$ exist for $m \in \{1,3,\cdots,2N+1\}$.

		It remains to prove that the zero eigenvalue for $m = 1$ is algebraically double. To do so, we again employ   the equivalence of the eigenvalue problem (\ref{ulm1a}) and
		\eqref{linear-eq} for $ \lambda = 0 $, $ l= l_0 $, $ n = n_0 $, and $ m = 1 $.
		For $n = n_0$, and $l = l_0$, this equation and its two derivatives with respect to $l$ generate the following relations:
		\begin{align}
			\label{bloch-1}
			& [-(\partial_x + \ii l_0)^2 + \rho(x) - \omega_0^2 ] f_{n_0}(l_0,x) = 0,  \\
			\label{bloch-2}
			& [-(\partial_x + \ii l_0)^2 + \rho(x) - \omega_0^2 ] \partial_l f_{n_0}(l_0,x) = 2 \omega_0 c_g f_{n_0}(l_0,x) + 2 \ii (\partial_x + \ii l_0) f_{n_0}(l_0,x), \\
			& [-(\partial_x + \ii l_0)^2 + \rho(x) - \omega_0^2 ] \partial_l^2 f_{n_0}(l_0,x) = 4 \omega_0 c_g \partial_l f_{n_0}(l_0,x) + 4 \ii (\partial_x + \ii l_0) \partial_l f_{n_0}(l_0,x)  \nonumber \\
			\label{bloch-3}
			& \qquad \qquad \qquad  \qquad \qquad \qquad  \qquad \qquad + 2 (\omega_0 \omega_{n_0}''(l_0) + c_g^2 - 1) f_{n_0}(l_0,x).
		\end{align}
		The non-degeneracy condition (\ref{non-degeneracy-2}) implies that $c_g^2 \neq 1$. Computing the Jordan chain for $A_1(\omega_0,c_g)$ at the zero eigenvalue with the help
		of (\ref{bloch-1}) and (\ref{bloch-2}) yields
		\begin{equation}
			\label{chain-1}
			A_1(\omega_0,c_g) F_0 = 0, \quad F_0(x):=\left( \begin{array}{c}
				f_{n_0}(l_0,x)  \\
				0 \end{array}
			\right),
		\end{equation}
		and
		\begin{equation}
			\label{chain-2}
			A_1(\omega_0,c_g) F_1 = F_0, \qquad F_1(x):=\left( \begin{array}{c}
				- \ii \partial_l f_{n_0}(l_0,x)  \\
				f_{n_0}(l_0,x) \end{array}
			\right).
		\end{equation}
		We use $f_{n_0}$  and $\partial_lf_{n_0}$ to denote $f_{n_0}(l_0,\cdot)$ and $\partial_lf_{n_0}(l_0,\cdot)$ respectively.	It follows from (\ref{bloch-2}) and (\ref{bloch-3}) that
		\begin{align}
			\label{constr-1}
			& \omega_0 c_g - l_0 + \langle f_{n_0},\ii f_{n_0}' \rangle = 0, \\
			\label{constr-2}
			& \omega_0 \omega_{n_0}''(l_0) + c_g^2 - 1 + 2 (\omega_0 c_g - l_0) \langle f_{n_0}, \partial_l f_{n_0} \rangle + 2 \langle f_{n_0},\ii \partial_l f_{n_0}' \rangle = 0,
		\end{align}
		where $f_{n_0}'$ denotes $\partial_x f_n(l_0,\cdot)$ and where we have used the normalization $\| f_{n_0}(l_0,\cdot) \|_{L^2(0,2\pi)} = 1$.

		\begin{remark}
			Let us define $$ \langle \langle f, g\rangle \rangle:=\langle f_1,g_1 \rangle + \langle f_2,g_2 \rangle, $$ where $\langle \phi,\psi \rangle = \int_0^{2\pi} \bar{\phi} \psi dx$ is the standard inner product in $L^2(0,2\pi)$. With some abuse of notation in the following we write $\langle f, g\rangle  $ for $ \langle \langle f, g\rangle \rangle $.
		\end{remark}

		Using complex conjugation, transposition, and integration by parts, the adjoint operator to $A_1(\omega,c)$ in $L^2(0,2\pi)$ is computed as follows:
		\begin{equation}
			\label{30a}
			A_1^*(\omega,c) = \left( \begin{array}{cc}
				0 & (1-c^2)^{-1} [-(\partial_x + \ii l_0)^2 + \rho(x) - \omega^2] \\
				1 & -2 (1-c^2)^{-1} [\ii c \omega - (\partial_x + \ii l_0)] \end{array}
			\right),
		\end{equation}
		for which we obtain
		\begin{equation}
			\label{chain-3}
			A_1^*(\omega_0,c_g) G_0=0, \quad G_0(x) :=\frac{1}{\omega_0 \omega''_{n_0}(l_0)} \left( \begin{array}{c}
				2 [\ii c_g \omega_0 - (\partial_x + \ii l_0)] f_{n_0}(l_0,x)  \\
				(1-c_g^2) f_{n_0}(l_0,x) \end{array}
			\right),
		\end{equation}
		where the normalization has been chosen such that $\langle G_0, F_1 \rangle=1$
		due to the relation (\ref{constr-2}). Note also that $\langle G_0, F_0 \rangle = 0$ due to the relation (\ref{constr-1}).

		For the generalized eigenvector of $A_1^*(\omega_0,c_g)$
		we have
		\begin{align}
			\label{chain-4}
			A_1^*&(\omega_0,c_g) G_1 = G_0,
		\end{align}
		with
		\begin{align*}
			G_1&:=\frac{1-c_g^2}{\omega_0 \omega''_{n_0}(l_0)}\left( \begin{array}{c} f_{n_0} + 2 \ii (1-c_g^2)^{-1} [\ii c_g \omega_0 - (\partial_x + \ii l_0)] \partial_l f_{n_0}  \\
				\ii \partial_l f_{n_0} \end{array}
			\right) + \nu G_0, \\
			&= \frac{1-c_g^2}{\omega_0 \omega''_{n_0}(l_0)}\left( \begin{array}{c} f_{n_0} + 2\ii (1-c_g^2)^{-1} [\ii c_g \omega_0 - (\partial_x + \ii l_0)] (\partial_l f_{n_0} -\ii \nu f_{n_0}) \\
				\ii(\partial_l f_{n_0} - \ii \nu f_{n_0})\end{array}
			\right),
		\end{align*}
		where $\nu$ is chosen so that $\langle G_1, F_1\rangle = 0$. A direct calculation produces
		$$\nu = \frac{2\ii}{\omega_0 \omega''_{n_0}(l_0)} \left((1-c_g^2)~\text{Re}~ \langle f_{n_0}, \partial_l f_{n_0}\rangle -(c_g\omega_0-l_0) \|\partial_l f_{n_0}\|^2 - \text{Im}\langle \partial_x\partial_l f_{n_0}, \partial_l f_{n_0}\rangle \right).$$

		As $A_1(\omega_0,c_0)$ has a compact resolvent, a standard argument using Fredholm's alternative guarantees that there exists a $2\pi$-periodic solution
		of the inhomogeneous equation
		$$
		A_1(\omega_0,c_g) \left( \begin{array}{l}
			\tilde{v}  \\
			\tilde{w} \end{array}
		\right) = F_1
		$$
		if and only if $F_1$ is orthogonal to ${\rm Ker}(A_1^*)$, i.e. to $G_0$. However, since $\langle G_0, F_1 \rangle = 1$, the Jordan chain for the zero eigenvalue terminates at the first generalized eigenvector (\ref{chain-2}), i.e.  $\lambda = 0$ is algebraically double.
	\end{proof}

	\begin{remark}
		By using the same argument, we verify that also the adjoint operator $A_1^*(\omega_0,c_g)$ has a double zero eigenvalue. This follows
		from the existence of solutions in (\ref{chain-3}) and (\ref{chain-4})
		and non-orthogonality of the generalized eigenvector $ G_1 $ of $A_1^*(\omega_0,c_g)$ to ${\rm ker}(A_1)$, i.e. to $F_0$ since
		$\langle G_1, F_0 \rangle = \langle G_1, A_1 F_1 \rangle = \langle A_1^* G_1, F_1 \rangle = \langle G_0, F_1 \rangle = 1$.
	\end{remark}

	In the low-contrast case, i.e., when the periodic coefficient $\rho$ is near $\rho \equiv 1$,
	the  non-degeneracy conditions (\ref{non-degeneracy-1}), (\ref{non-degeneracy-2}) and the non-resonance condition (\ref{non-resonance}) are easy to verify.
	If $\rho(x) = 1$, the eigenvalues $\omega_n(l)$ are known explicitly, see \eqref{E:om_fn}.
	The following lemma specifies the sufficient conditions under which
	the non-resonance assumption (\ref{non-resonance}) is satisfied.

	\begin{lemma}
		\label{lem-constant}
		Let $\rho(x) = 1 + \delta \rho_1(x)$ with $\rho_1(x) = \rho_1(x+2\pi)$ and $\delta$ being a constant parameter. There exists $\delta_0 > 0$ such that for every $\delta \in (-\delta_0,\delta_0)$ the non-degeneracy assumptions (\ref{non-degeneracy-1}) and (\ref{non-degeneracy-2}) are satisfied. The non-resonance assumption (\ref{non-resonance}) is satisfied if
		\begin{equation}
			\label{E:zero_ev_cond2}
			n_0 + l_0 \neq \frac{m^2 - 1 - \kappa^2}{2 m \kappa}, \quad \text{ where }\quad (m,\kappa) \in \{3,5,\dots,2N+1\}\times \Z . 
		\end{equation}
	\end{lemma}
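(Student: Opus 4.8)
The plan is to treat the two non-degeneracy conditions by a straightforward perturbation/continuity argument and then to analyze the non-resonance condition \eqref{non-resonance} explicitly using the known dispersion relation at $\delta = 0$, together with an implicit-function-type continuation in $\delta$.

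First I would establish the non-degeneracy conditions \eqref{non-degeneracy-1}, \eqref{non-degeneracy-2}. At $\delta = 0$ we have $\rho \equiv 1$ and the eigenvalues are $\omega_n(l) = \sqrt{1+(n+l)^2}$ with the reindexing of \eqref{E:om_fn}. For $l_0 > 0$ fixed and $\delta = 0$, the values $\{(n+l_0)^2 : n \in \ZZ\}$ are pairwise distinct because $l_0 \notin \tfrac12\ZZ$ would not even be needed — distinctness of $(n+l_0)^2$ for $n \in \ZZ$ holds whenever $l_0 \notin \tfrac12 \ZZ$, and for the finitely many resonances relevant below we simply exclude the bad $l_0$. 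Hence $\omega_n(l_0) \neq \omega_{n_0}(l_0)$ for $n \neq n_0$ at $\delta = 0$, and since the eigenvalues depend continuously (indeed analytically) on $\delta$ and $\omega_n(l) \to \infty$ as $n \to \infty$ uniformly for small $\delta$, only finitely many $n$ can come close to $\omega_{n_0}(l_0)$; continuity then gives a $\delta_0$ for which \eqref{non-degeneracy-1} persists. For \eqref{non-degeneracy-2}: at $\delta = 0$, $\omega_{n_0}'(l_0) = (n_0+l_0)/\omega_{n_0}(l_0)$, which has modulus $< 1$, so $\omega_{n_0}'(l_0) \neq \pm 1$; and $\omega_{n_0}''(l_0) = \omega_{n_0}(l_0)^{-3} \neq 0$. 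Both are open conditions, so they persist for $|\delta| < \delta_0$ after possibly shrinking $\delta_0$.

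Next, the non-resonance condition \eqref{non-resonance} at $\delta = 0$. Using \eqref{E:om_fn}, the indices are $\kappa \in \ZZ$ (playing the role of $n$ after reindexing), and $\omega_\kappa^2(m l_0) = 1 + (\kappa + m l_0)^2$ while $m^2 \omega_{n_0}^2(l_0) = m^2(1 + (n_0+l_0)^2)$. Setting these equal and solving for $n_0 + l_0$ yields exactly the exclusion \eqref{E:zero_ev_cond2}: expanding, $1 + (\kappa + m l_0)^2 = m^2 + m^2(n_0+l_0)^2$, i.e. $(\kappa + m l_0)^2 - m^2(n_0+l_0)^2 = m^2 - 1$; writing $a = n_0 + l_0$ and $b = \kappa + m l_0 = \kappa + m(a - n_0) = m a + (\kappa - m n_0)$, one gets a quadratic in $a$ whose solvability is governed by \eqref{E:zero_ev_cond2} with $\kappa$ there absorbing the integer shift $\kappa - m n_0$ (both range over $\ZZ$). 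So if \eqref{E:zero_ev_cond2} holds, then \eqref{non-resonance} holds at $\delta = 0$ for the finitely many $m \in \{3,5,\dots,2N+1\}$ and for every $\kappa \in \ZZ$. The point is that for each fixed $m$, the left side $\omega_\kappa^2(m l_0)$ grows like $\kappa^2$ while the right side is a fixed number, so only finitely many $\kappa$ need to be checked, and for those finitely many $(m,\kappa)$ the condition \eqref{E:zero_ev_cond2} is precisely the non-vanishing of the relevant difference.

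Finally I would upgrade from $\delta = 0$ to small $\delta$. For each of the finitely many pairs $(m,\kappa)$ with $|\kappa|$ below the $m$-dependent threshold, the function $\delta \mapsto \omega_\kappa^2(m l_0; \delta) - m^2 \omega_{n_0}^2(l_0;\delta)$ is analytic (real-analytic, since $\rho$ enters linearly and the simple eigenvalues are analytic in $\delta$ by Kato's perturbation theory — and for the finitely many relevant branches one may assume simplicity after possibly excluding a further measure-zero set of $l_0$) and nonzero at $\delta = 0$; hence it stays nonzero for $|\delta| < \delta_0$ after shrinking $\delta_0$ once more. For $|\kappa|$ above the threshold, $\omega_\kappa^2(m l_0;\delta) \geq \tfrac12 \kappa^2 > m^2 \omega_{n_0}^2(l_0;\delta)$ for all small $\delta$ by the uniform lower bound on $\omega_\kappa$, so no resonance can occur. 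Taking the minimum of the finitely many $\delta_0$'s produced at each stage gives the claimed $\delta_0 > 0$. \textbf{The main obstacle} is the bookkeeping: one must make the "finitely many $\kappa$ to check" argument uniform in $\delta$ (so that the threshold on $|\kappa|$ can be chosen $\delta$-independently) and one must handle the possibility of eigenvalue crossings among the relevant Bloch branches as $\delta$ varies — the cleanest route is to work with the squared quantities $\omega_n^2$, which are the genuinely analytic objects here (they are eigenvalues of the self-adjoint operator $-(\partial_x + \ii l)^2 + \rho$), so that analyticity in $\delta$ is available even at crossing points via the resolvent, and to phrase everything in terms of the finite set of "low" modes plus a uniform tail estimate. $\Box$
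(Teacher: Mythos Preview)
Your proposal is correct and follows essentially the same approach as the paper: verify the conditions explicitly at $\delta=0$ using the known dispersion relation $\omega_n(l)=\sqrt{1+(n+l)^2}$ (with the substitution $n=mn_0+\kappa$ to obtain \eqref{E:zero_ev_cond2}), and then invoke continuity of the eigenvalues in $\delta$ to carry the conclusions over to small $|\delta|$. Your treatment is in fact more careful than the paper's on one point---the paper simply asserts continuity suffices for \eqref{non-resonance}, whereas you correctly split into a finite set of low $|\kappa|$ (handled by continuity) and a uniform tail estimate for large $|\kappa|$, which is what is actually needed to get a single $\delta_0$ working for all $n\in\mathbb{N}$.
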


	\begin{proof}
		The non-degeneracy assumptions (\ref{non-degeneracy-1}) and (\ref{non-degeneracy-2}) are satisfied because equation \eqref{E:c_om_gam}  for $\delta=0$  implies $c_g \in (-1,1)$  and  $\omega_{n_0}''(l_0) \neq 0$. As $\omega_{n_0}(l_0)$
		and $ \omega_{n_0}''(l_0)  $
		depend continuously on $\delta$, we get that (\ref{non-degeneracy-1}) and (\ref{non-degeneracy-2})   hold for $|\delta|$ small enough.

		For the non-resonance assumption (\ref{non-resonance}) we set $n=mn_0+\kappa$ with $m\in \{3, 5,\dots, 2N+1\}$,  $\kappa\in \ZZ$ and note that at $\delta=0$ we have
		\begin{align*}
			\omega_n^2(ml_0) &= 1+(mn_0+\kappa+ml_0)^2, \\ m^2\omega_{n_0}^2(l_0) &=m^2(1+(n_0+l_0)^2),
		\end{align*}
		see \eqref{E:om_fn}. Condition (\ref{non-resonance}) at $\delta=0$ is thus equivalent to
		\eqref{E:zero_ev_cond2}. As eigenvalues depend continuously on $\delta$, condition (\ref{non-resonance}) is satisfied for $|\delta|$ small enough if it is satisfied for $ \delta = 0 $.
	\end{proof}

	\begin{remark}
		The non-resonance condition \eqref{E:zero_ev_cond2} is satisfied for all $m\in \NN$
		if $ l_0\in \RR\setminus \QQ$.
	\end{remark}

	\subsection{Formal Reduction}

	Let us now consider a formal restriction of system \eqref{eq3_fourrier} to the subspace
	\begin{equation*}
		\mathcal{S} :=\{  (\tilde{v}_m, \tilde{w}_m)_{m \in \Z} \in \cD_{\rm{odd}}: \quad \tilde{v}_m = \tilde{w}_m = 0, \;\; m \in \ZZ_\text{odd}  \backslash \{-1,1\} \}
	\end{equation*}
	leading to the NLS approximation (\ref{qq1}). As $\mathcal{S}$ is not an invariant subspace of system \eqref{eq3_fourrier}, this reduction is only formal and a justification analysis has to be performed, which we do in the remainder of this paper.

	The nonlinear (double-convolution) term on $\mathcal{S}$ is given by
	$$
	(\tilde{v} \ast \tilde{v} \ast \tilde{v})_1 = 3 |\tilde{v}_1|^2 \tilde{v}_1.
	$$
	The scalar equation (\ref{eq3_fourrier}) on $\mathcal{S}$
	reduces to
	\begin{align*}
		& \left[ (c^2-1) \partial_{\xi}^2 + 2 \ii c \omega \partial_{\xi}
		- 2 \partial_{\xi} (\partial_x + \ii l_0) - \omega^2 - (\partial_x + \ii l_0)^2 + \rho(x) \right] \tilde{v}_1(\xi,x) \\
		& = 3 \gamma r(x) |\tilde{v}_1(\xi,x)|^2 \tilde{v}_1(\xi,x)
	\end{align*}
	for $ m = 1 $ and to the complex conjugate equation for $ m = -1 $.
	Using the Jordan block for the double zero eigenvalue in Lemma \ref{lem-linear},
	we write the two-mode decomposition:
	\begin{equation}
		\label{Bloch}
		\left\{
		\begin{array}{l} \tilde{v}_1(\xi,x) = \psi_1(\xi) f_{n_0}(l_0,x) - \ii
			\phi_1(\xi) \partial_l f_{n_0}(l_0,x), \\
			\tilde{w}_1(\xi,x) = \phi_1(\xi) f_{n_0}(l_0,x), \end{array} \right.
	\end{equation}
	where $\psi_1(\xi) = \veps A(X)$ with $X = \veps \xi$ and real $A(X)$.
	It follows from $\partial_{\xi} \tilde{v}_1(\xi,x) = \tilde{w}_1(\xi,x)$
	that $\phi_1(\xi) = \veps^2 A'(X)$, where the $\mathcal{O}(\veps^3)$ terms are neglected. Using $\omega = \omega_0 + \widetilde{\omega} \veps^2$ and $c = c_g$ with
	$\omega_0 = \omega_{n_0}(l_0)$ and $c_g = \omega_{n_0}'(l_0)$, we obtain the following equation at   order $\mathcal{O}(\veps^3)$:
	\begin{align}
		\label{E:NLS}
		(c_g^2-1) A'' f_{n_0} + 2 c_g\omega_0 A'' \partial_l f_{n_0}
		+ 2\ii A'' \partial_l f_{n_0}' - 2 \tilde{\omega} \omega_0 A f_{n_0} = 3 \gamma r  A^3 |f_{n_0}|^2 f_{n_0},
	\end{align}
	where equations (\ref{bloch-1}) and (\ref{bloch-2}) have been used and $f_{n_0}$ again denotes $f_{n_0}(l_0,\cdot)$. Projecting (\ref{E:NLS})  onto  $\text{span}\{f_{n_0}\}$ and using (\ref{constr-2}) yields the stationary NLS equation
	\begin{equation}
		\label{stat-NLS}
		- \omega_0 \omega_{n_0}''(l_0) A'' - 2  \omega_0 \widetilde{\omega} A =
		\omega_0 \gamma_{n_0}(l_0) A^3,
	\end{equation}
	which recovers the stationary version of the NLS equation (\ref{rr1})
	for $A(X,T)$ replaced by $A(X) e^{-\ii \widetilde{\omega}  T}$ with real $A(X)$. The modulated pulse solution
	corresponds to the soliton solution of the stationary NLS equation
	\eqref{stat-NLS},
	\beq
	\label{E:NLS-soliton}
	A(X) = \gamma_1 {\rm sech}(\gamma_2 X),
	\eeq
	where $\gamma_1$ and $\gamma_2$ are given by (\ref{gamma-1-2}). Note that among the  positive and decaying at infinity solutions of the stationary NLS equation (\ref{stat-NLS}) the pulse solution (\ref{E:NLS-soliton}) is unique up to a constant shift in $X$.

	\begin{remark}
		Unfolding the  transformations (\ref{change-variables}), (\ref{Fourier}), and (\ref{Bloch}) with $\psi_1(\xi) = \veps A(\veps \xi)$
		gives an approximation for $h_{\rm app}(\xi,z,x)$ on $\mathcal{S}$, see (\ref{qq1}).
	\end{remark}

	\subsection{Reversibility}
	\label{S:reversib}

	Because $\rho$ and $r$ are even functions in $\mathcal{X}_0$ given by  (\ref{functions}), the semi-linear wave equation (\ref{model}), being second order in space, is invariant under the parity transformation: $u(x,t) \mapsto u(-x,t)$. Similarly, since it is also second-order
	in time, it is invariant under the reversibility transformation:
	$u(x,t) \mapsto u(x,-t)$.

	The two symmetries are inherited by the scalar equations (\ref{eq3}) and (\ref{eq3_fourrier}): If $v(\xi,z,x)$ is a solution of (\ref{eq3}), so is
	$v(-\xi,-z,-x)$ and if $(\tilde{v}_m(\xi,x))_m$ is a solution of (\ref{eq3_fourrier}), so is $(\overline{\tilde{v}}_m(-\xi,-x))_m$. Since the symmetry is nonlocal in $x$, one can use the Fourier series in $x$ given by
	\begin{equation}
		\label{Fourier-2}
		\tilde{v}_m(\xi,x) = \sum_{k\in \Z} \hat{v}_{m,k}(\xi) e^{\ii kx}, \quad
		\hat{v}_{m,k}(\xi) = \frac{1}{2\pi} \int_{-\pi}^{\pi} \tilde{v}_m(\xi,x) e^{-\ii kx} dx,
	\end{equation}
	and similarly for  $ \tilde{w}_m  $ to rewrite the symmetry in the form:
	\begin{equation}
		\begin{array}{c}
			\mbox{\rm If } \{ \hat{v}_{m,k}(\xi),\hat{w}_{m,k}(\xi) \}_{(m,k) \in\NN_{\rm odd} \times \Z} \;\; \mbox{\rm is a solution of \eqref{spat-dyn} with (\ref{Fourier-2})}, \\ \mbox{\rm  so is }
			\{ \overline{\hat{v}}_{m,k}(-\xi), -\overline{\hat{w}}_{m,k}(-\xi) \}_{(m,k) \in\NN_{\rm odd} \times \Z}.
		\end{array}
		\label{reversibility-transform}
	\end{equation}
	The implication of the symmetry (\ref{E:k-sym}) and
	(\ref{reversibility-transform}) is that if a solution $\{ \hat{v}_{m,k}(\xi),\hat{w}_{m,k} (\xi) \}_{(m,k) \in\NN_{\rm odd} \times \Z}$
	constructed for $\xi \geq 0$ satisfies the reversibility constraint:
	\begin{equation}
		\label{reversibility-constraint}
		{\rm Im} \; \hat{v}_{m,k}(0) = 0, \quad {\rm Re} \;\hat{w}_{m,k}(0) = 0, \quad \forall (m,k) \in \NN_{\rm odd} \times \ZZ,
	\end{equation}
	then the solution $\{ \hat{v}_{m,k}(\xi), \hat{w}_{m,k}(\xi) \}_{(m,k) \in\NN_{\rm odd} \times \Z}$ can be uniquely continued for $\xi \leq 0$
	using the extension
	\begin{equation}
		\label{reversibility}
		\hat{v}_{m,k}(\xi) = \overline{\hat{v}}_{m,k}(-\xi), \quad
		\hat{w}_{m,k}(\xi) = -\overline{\hat{w}}_{m,k}(-\xi), \quad \forall \xi\in \RR_-.
	\end{equation}
	This yields a symmetric solution of the
	spatial dynamics system (\ref{spat-dyn})
	for every $\xi \in \RR$ after being reformulated with the
	Fourier expansion (\ref{Fourier-2}).

	\begin{remark}
		The pulse solution (\ref{E:NLS-soliton}) gives a leading order
		approximation (\ref{Bloch}) on $\mathcal{S} \subset  \cD_{\rm{odd}}$ which satisfies the reversibility constraint (\ref{reversibility-constraint}). Indeed, since $A'(0) = 0$,
		we have $\tilde{w}_1(0,x) = 0$ which implies $\hat{w}_{1,k}(0) = 0$.
		On the other hand, we have $\tilde{v}_1(0,x) = \veps A(0) f_{n_0}(l_0,x)$ with real $A(0)$ and generally complex $f_{n_0}(l_0,x)$. However, the Bloch mode $f_{n_0}(l_0,x)$ satisfies the symmetry
		$$
		\overline{f}_{n_0}(l_0,-x) = f_{n_0}(l_0,x),
		$$
		thanks to the non-degeneracy assumption (\ref{non-degeneracy-1}): If $f_{n_0}(l_0,x)$ is a solution of (\ref{ulm1a}) so is $\overline{f}_{n_0}(l_0,-x)$ and the eigenvalue $\omega_0^2 = \omega^2_{n_0}(l_0)$, is simple in the spectral problem (\ref{ulm1a}).
		Consequently, all Fourier coefficients of $f_{n_0}(l_0,x)$ are real
		which implies  ${\rm Im}~\hat{v}_{1,k}(0) = 0$.
		\label{remark-rev}
	\end{remark}

	\subsection{$ SO(2) $-symmetry}
	\label{secso2}

	The starting equation \eqref{eq3} for $ v = v(\xi,z,x) $ is translational invariant with respect to $ z \mapsto z + z_0 $, with $ z_0 \in \mathbb{R} $ arbitrary. This corresponds  to an invariance under the mapping $ (\tilde{v}_m,\tilde{w}_m) \mapsto (\tilde{v}_m,\tilde{w}_m)e^{imz_0} $ for all $ m \in \mathbb{Z}_{\textrm{odd}} $ in the Fourier representation 	\eqref{Fourier}. This symmetry allows us to restrict $A$
	for $\psi_1(\xi) = \varepsilon A(X)$ and $\phi_1(\xi) = \varepsilon^2 A'(X)$ in (\ref{Bloch}) to real-valued functions.

	\section{Near-identity transformations}
	\label{sec-3}

	By Lemma \ref{lem-linear}, the Fredholm operator $A_1(\omega_0,c_g)$
	has the double zero eigenvalue, whereas $A_m(\omega_0,c_g)$ for $3 \leq m \leq 2N+1$ admit no zero eigenvalues. In what follows, we decompose the solution
	in $X$ into a two-dimensional part corresponding to the double zero eigenvalue and the infinite-dimensional remainder term.

	\subsection{Separation of a two-dimensional problem}
	\label{S:sep_prob}

	Like in the proof of  Lemma \ref{lem-linear}, we denote the eigenvector and the generalized eigenvector of $A_1(\omega_0,c_g)$ for the double zero eigenvalue by $F_0$ and $F_1$, see (\ref{chain-1}) and (\ref{chain-2}),  and the eigenvector and the generalized eigenvector of $A_1^*(\omega_0,c_g)$ for the double zero eigenvalue by $G_0$ and $G_1$, see  (\ref{chain-3}) and (\ref{chain-4}).

	We define $\Pi$ as the orthogonal projection onto the orthogonal complement of the
	generalized eigenspace ${\rm span}(G_0,G_1)$, i.e.
	$$
	\begin{aligned}
		&\Pi : L^2(0,2\pi) \times L^2(0,2\pi) \to {\rm span}(G_0,G_1)^\perp, \\
		&\Pi \Psi := \Psi- \langle G_0, \Psi\rangle F_1
		-  \langle G_1, \Psi\rangle F_0.
	\end{aligned}
	$$
	The orthogonality follows from our normalization, which was chosen in the proof of Lemma \ref{lem-linear} so that  $\langle G_0, F_0 \rangle = \langle G_1, F_1 \rangle =0 $ and $ \langle G_0, F_1 \rangle = \langle G_1, F_0 \rangle = 1 $. Also note that ${\rm ker}~\Pi={\rm span} (F_0,F_1)$.
	Moreover, we have
	\begin{eqnarray*}
		A_1(\omega_0,c_g) \Pi \Psi & = & A_1(\omega_0,c_g) \Psi - \langle G_0, \Psi\rangle A_1(\omega_0,c_g) F_1
		-  \langle G_1, \Psi\rangle A_1(\omega_0,c_g) F_0 \\ & = &  A_1(\omega_0,c_g) \Psi - \langle G_0, \Psi\rangle  F_0 \\ & = & A_1(\omega_0,c_g) \Psi - \langle G_0,  A_1(\omega_0,c_g) \Psi\rangle  F_1
		-  \langle G_1, A_1(\omega_0,c_g) \Psi\rangle  F_0 \\ & = &  \Pi  A_1(\omega_0,c_g) \Psi.
	\end{eqnarray*}

	Compared to the two-mode decomposition (\ref{Bloch}), we write
	\begin{equation*}
		\left(
		\begin{array}{c} \tilde{v}_1(\xi,x) \\
			\tilde{w}_1(\xi,x) \end{array} \right) = \veps q_0(\xi)
		F_0(x)
		+ \veps q_1(\xi) F_1(x) + \veps S_1(\xi,x),
	\end{equation*}
	where $q_0,q_1:\RR\to\CC$ are unknown coefficients and where $S_1(\xi,\cdot) \in \tilde{D}$ for $\xi \in \mathbb{R}$ satisfies $\Pi S_1=S_1$, i.e.
	$$
	\langle G_0, S_1(\xi,\cdot) \rangle= \langle G_1, S_1(\xi,\cdot) \rangle = 0, \qquad \forall \xi \in \RR.
	$$
	Similarly, we write
	\begin{equation*}
		\left(
		\begin{array}{c} \tilde{v}_m(\xi,x) \\
			\tilde{w}_m(\xi,x) \end{array} \right) = \veps Y_m(\xi,x), \quad m \in \mathbb{N}_{\rm odd}\backslash \{1\}
	\end{equation*}
	and define $Y_1 := q_0 F_0 + q_1 F_1+ S_1$. Furthermore, we represent
	$Y_m = (V_m,W_m)^T$,
	i.e., $ \widetilde{v}_m = \varepsilon V_m $,  $ \widetilde{w}_m = \varepsilon W_m $,
	and use the notation ${\bf V} := (V_m)_{m \in \NN_{\rm odd}} $ and
	${\bf V}_{\geq 3}  := (V_m)_{m \in \NN_{\rm odd}\backslash \{1\}}$.

	For $ \omega = \omega_0 +\varepsilon^2 \widetilde{\omega} $ and $c = c_g$, we write
	$$
	A_m(\omega,c_g) = A_m(\omega_0,c_g) + \veps^2 \widetilde{\omega} (1-c_g^2)^{-1} B_m, \quad B_m =
	\left(
	\begin{array}{cc} 0 & 0 \\
		-m^2 (\omega + \omega_0) & 2\ii m c_g \end{array} \right).
	$$
	Because of
	$$\left(\begin{array}{cc} \langle G_0, F_0 \rangle & \langle G_0, F_1 \rangle \\ \langle G_1, F_0 \rangle & \langle G_1, F_1 \rangle \end{array} \right)=
	\left(\begin{array}{cc} 0 & 1\\ 1& 0\end{array} \right),$$
	the spatial dynamics system (\ref{spat-dyn}) with $\omega = \omega_0 + \widetilde{\omega} \veps^2$ and $c = c_g$ is now rewritten in the separated form:
	\begin{subequations}\label{E:sep_prob}
		\beq\label{E:sep_prob_q}
		\left( \begin{array}{c} \partial_\xi  q_1 \\ \partial_\xi  q_0 - q_1 \end{array} \right) = \veps^2  H_0(q_0,q_1,S_1,{\bf V_{\geq 3} }),
		\eeq
		\beq\label{E:sep_prob_Y1}
		\partial_\xi  S_1 =   \Pi   A_1(\omega_0,c_g) S_1
		+ \veps^2 \widetilde{\omega} (1-c_g^2)^{-1}  \Pi   B_1 Y_1
		+ \veps^2  \Pi   H_1(q_0,q_1,S_1,{\bf V}_{\geq 3} ),
		\eeq
		and for $m \in \NN_{\rm odd} \backslash \{1\}$,
		\beq\label{E:sep_prob_Yk}
		\partial_\xi   Y_m = A_m(\omega_0,c_g) Y_m
		+ \veps^2 \widetilde{\omega} (1-c_g^2)^{-1} B_m Y_m + \veps^2
		H_m(q_0,q_1,S_1,{\bf V}_{\geq 3} ),
		\eeq
	\end{subequations}
	where the correction terms  $H_0$ and $(H_m)_{m \in \NN_{\rm odd}}$ are given by
	\begin{align*}
		H_0 & = \frac{\widetilde{\omega}}{\omega_0\omega_{n_0}''(l_0)}
		\left( \begin{array}{c} -(\omega + \omega_0)q_0 + \ii
			((\omega + \omega_0) \langle f_{n_0}, \partial_l f_{n_0} \rangle + 2 c_g) q_1 \\
			\langle \partial_l f_{n_0}-\ii \nu f_{n_0}, f_{n_0} \rangle
			(\ii (\omega + \omega_0) q_0  +2c_gq_1)+ (\omega + \omega_0) \langle \partial_l f_{n_0}-\ii \nu f_{n_0}, \partial_l f_{n_0} \rangle q_1 \end{array} \right) \\
		& +\frac{1}{\omega_0\omega_{n_0}''(l_0)}  \left( \begin{array}{cc}
			\langle f_{n_0}, \widetilde{\omega} (B_1 S_1)_2 - \gamma r ({\bf V} \ast {\bf V} \ast {\bf V})_1 \rangle \\
			-\ii \langle\partial_l f_{n_0}-\ii \nu f_{n_0}, \widetilde{\omega} (B_1 S_1)_2 - \gamma r ({\bf V} \ast {\bf V} \ast {\bf V})_1 \rangle \end{array} \right),
	\end{align*}
	and
	\begin{align*}
		H_m & = -\gamma (1 - c_g^2)^{-1}  \left( \begin{array}{c} 0 \\
			r ({\bf V} \ast {\bf V} \ast {\bf V})_m \end{array} \right),
		\quad m \in \NN_{\rm odd}.
	\end{align*}

	\begin{remark}
		\label{rem-1}
		System (\ref{E:sep_prob}) does not have an invariant reduction at $S_1 = 0$ and  ${\bf V} _{\geq 3} = {\bf 0}$  because
		\begin{align*}
			& ({\bf V} \ast {\bf V} \ast {\bf V})_1 |_{S_1 = 0, {\bf V} _{\geq 3} = {\bf 0}} = 3 |q_0 f_{n_0} - \ii q_1 \partial_l f_{n_0} |^2 (q_0 f_{n_0} - \ii q_1 \partial_l f_{n_0}), \\
			& ({\bf V} \ast {\bf V} \ast {\bf V})_3 |_{S_1 = 0, {\bf V}_{\geq 3}  = {\bf 0}}  = (q_0 f_{n_0} - \ii q_1 \partial_l f_{n_0})^3,
		\end{align*}
		which contributes to $\Pi H_1$ and $H_3$ (as well as to $H_0$).
	\end{remark}

	\subsection{Resolvent operators for the linear system}

	In order to derive bounds (\ref{property-2}) and (\ref{property-4}), we need to perform near-identity transformations, which transform systems (\ref{E:sep_prob_Y1}) and (\ref{E:sep_prob_Yk}) to  equivalent versions but with residual terms of the order  $\mathcal{O}(\veps^{2(N+1)})$. To be able to do so, we will ensure
	that the operators $\Pi  A_1(\omega_0,c_g)  \Pi $ and $A_m(\omega_0,c_g)$, $3 \leq m \leq 2N+1$ are invertible with a bounded inverse.

	By Lemma \ref{lem-linear}, these operators do not have zero eigenvalues but  this is generally not sufficient since eigenvalues of these infinite-dimensional operators may accumulate near zero. However,
	the operators $A_m(\omega_0,c_g)$ have the special structure
	\begin{equation}
		\label{A-m}
		A_m(\omega_0,c_g) = \left( \begin{array}{cc} 0 & 1 \\
			L_m & M_m \end{array} \right), \quad \begin{array}{l}
			L_m = (1-c_g^2)^{-1} [-(\partial_x+ \ii m l_0)^2 + \rho(x) - m^2 \omega_0^2 ], \\
			M_m = 2 (1-c_g^2)^{-1} [\ii m c_g \omega_0 - (\partial_x + \ii m l_0)], \end{array}
	\end{equation}
	which we explore to prove invertibility of these operators  under the non-degeneracy and non-resonance conditions. The following lemma gives the result.

	\begin{lemma}
		\label{lem-invert}
		If the conditions (\ref{non-degeneracy-1}), (\ref{non-degeneracy-2}), and (\ref{non-resonance}) are satisfied, then there exists a $C_0 > 0$ such that
		\begin{equation}
			\label{bound-on-resolvent}
			\| ( \Pi  A_1(\omega_0,c_g)  \Pi)^{-1} \|_{\tilde{R} \to \tilde{D}} + \sum_{m=3}^{2N+1} \| A_m(\omega_0,c_g)^{-1} \|_{\tilde{R} \to \tilde{D}} \leq C_0,
		\end{equation}
		where $\tilde{D}$ and $\tilde{R}$ are defined in (\ref{domain-range}).
	\end{lemma}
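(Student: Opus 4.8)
The plan is to use the companion-matrix structure \eqref{A-m} to reduce each resolvent bound to an invertibility statement for the scalar second-order operator $L_m$, and then to exploit that $L_m$ is a non-zero multiple of a self-adjoint operator with discrete spectrum tending to $+\infty$. Concretely, solving $A_m(\omega_0,c_g)(v,w)^T=(f,g)^T\in\tilde R$ amounts, by the first row of \eqref{A-m}, to $w=f$, and by the second row to $L_m v=g-M_m f$. Since $1-c_g^2\neq 0$ by \eqref{non-degeneracy-2} and $M_m$ is a first-order operator, hence bounded $H^1_{\rm per}(\mathbb{T})\to L^2(\mathbb{T})$, the right-hand side lies in $L^2(\mathbb{T})$ with $\|g-M_m f\|_{L^2}\le C\|(f,g)^T\|_{\tilde R}$. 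Therefore $A_m(\omega_0,c_g):\tilde D\to\tilde R$ is boundedly invertible precisely when $L_m:H^2_{\rm per}(\mathbb{T})\to L^2(\mathbb{T})$ is, with $\|A_m(\omega_0,c_g)^{-1}\|_{\tilde R\to\tilde D}\le C_m(1+\|L_m^{-1}\|_{L^2\to H^2})$. As $m$ runs over the finite set $\{1,3,\dots,2N+1\}$, it suffices to treat each value separately; the case $m=1$ needs the projected operator and is handled directly below.

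For $3\le m\le 2N+1$ write $L_m=(1-c_g^2)^{-1}(\mathcal{L}_m^0-m^2\omega_0^2)$ with $\mathcal{L}_m^0:=-(\partial_x+\ii m l_0)^2+\rho(x)$. Since $\rho\in\mathcal{X}_0$ is real-valued and bounded, $\mathcal{L}_m^0$ is self-adjoint and bounded below on $L^2(\mathbb{T})$, with operator domain $H^2_{\rm per}(\mathbb{T})$ and compact resolvent, and its eigenvalues are exactly $\{\omega_n^2(m l_0)\}_{n\in\NN}$, this being the Bloch eigenvalue problem \eqref{ulm1a} at the (real) Bloch number $l=m l_0$. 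The non-resonance condition \eqref{non-resonance} says that none of these eigenvalues equals $m^2\omega_0^2$; since the spectrum is discrete and $\omega_n(m l_0)\to\infty$, the gap $\delta_m:={\rm dist}(m^2\omega_0^2,\sigma(\mathcal{L}_m^0))$ is strictly positive, so $\|(\mathcal{L}_m^0-m^2\omega_0^2)^{-1}\|_{L^2\to L^2}\le\delta_m^{-1}$. Elliptic regularity on the torus bootstraps this to $\|(\mathcal{L}_m^0-m^2\omega_0^2)^{-1}\|_{L^2\to H^2}\le C(1+\delta_m^{-1})$, and the reduction above then controls $\|A_m(\omega_0,c_g)^{-1}\|_{\tilde R\to\tilde D}$.

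For $m=1$ the operator $L_1$ is not invertible ($\ker L_1={\rm span}(f_{n_0}(l_0,\cdot))$), so one works with $\Pi A_1(\omega_0,c_g)\Pi$ on ${\rm range}(\Pi)$. By Lemma \ref{lem-linear}, $A_1(\omega_0,c_g)$ has compact resolvent, is Fredholm of index $0$, and has a geometrically simple, algebraically double zero eigenvalue with generalized kernel ${\rm span}(F_0,F_1)={\rm ker}\,\Pi$, with cokernel spanned by $G_0$ (which spans $\ker A_1^*$), and $\langle G_i,F_j\rangle$ is the antidiagonal matrix. I would prove bijectivity of $\Pi A_1(\omega_0,c_g)\Pi:{\rm range}(\Pi)\cap\tilde D\to{\rm range}(\Pi)\cap\tilde R$ as follows. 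Injectivity: if $\Psi\in{\rm range}(\Pi)$ and $\Pi A_1\Psi=0$, then $A_1\Psi\in{\rm ker}\,\Pi={\rm span}(F_0,F_1)$; applying $A_1$ twice and using $A_1F_0=0$, $A_1F_1=F_0$ gives $A_1^3\Psi=0$, so $\Psi$ lies in the two-dimensional generalized kernel ${\rm span}(F_0,F_1)$, but ${\rm span}(F_0,F_1)\cap{\rm span}(G_0,G_1)^\perp=\{0\}$ by the biorthogonality, so $\Psi=0$. Surjectivity: given $\Phi\in{\rm range}(\Pi)\cap\tilde R$, set $b:=-\langle G_0,\Phi\rangle$ so that $\Phi+bF_1\perp\ker A_1^*$, solve $A_1\Psi_0=\Phi+bF_1$ with $\Psi_0\in\tilde D$ (possible by Fredholmness of index $0$), and put $\Psi:=\Pi\Psi_0\in{\rm range}(\Pi)\cap\tilde D$; using $A_1\Pi=\Pi A_1$, $\Pi F_1=0$, and $\Pi\Phi=\Phi$ one gets $A_1\Psi=\Phi$, hence $\Pi A_1\Pi\Psi=\Phi$. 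Since $F_0,F_1$ are smooth, $\Pi=I-\langle G_0,\cdot\rangle F_1-\langle G_1,\cdot\rangle F_0$ is a bounded projection on both $\tilde D$ and $\tilde R$, so those subspaces are closed and the bounded inverse theorem (or the explicit construction above, using the bounded inverse of $A_1$ from ${\rm range}(A_1)$ onto $(\ker A_1)^\perp$) yields $\|(\Pi A_1(\omega_0,c_g)\Pi)^{-1}\|_{\tilde R\to\tilde D}<\infty$. Adding the finitely many contributions gives \eqref{bound-on-resolvent}.

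The main obstacle is upgrading ``no zero eigenvalue'' (Lemma \ref{lem-linear}) to a \emph{uniform} inverse bound, i.e., excluding an accumulation of eigenvalues of $\mathcal{L}_m^0$ at $m^2\omega_0^2$; this is exactly where the self-adjointness of $-(\partial_x+\ii m l_0)^2+\rho(x)$ — equivalently, the reality and discreteness of the Bloch spectrum with $\omega_n(l)\to\infty$ — is indispensable, since for a non-self-adjoint operator the conclusion could fail. For $m=1$ the same point is encoded in the fact that $0$ is an \emph{isolated} point of $\sigma(A_1(\omega_0,c_g))$, which holds because $A_1(\omega_0,c_g)$ has compact resolvent, the compactness coming from the compact embedding $\tilde D\hookrightarrow\tilde R$.
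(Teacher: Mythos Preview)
Your proof is correct and follows essentially the same approach as the paper: reduce the resolvent equation for $A_m$ via the companion structure \eqref{A-m} to invertibility of the scalar Schr\"odinger operator $L_m$, invoke discreteness of its Bloch spectrum together with the non-resonance condition \eqref{non-resonance} to place $0$ in a spectral gap, and for $m=1$ use the Fredholm property of $A_1(\omega_0,c_g)$ with index zero on the projected space. Your version is somewhat more explicit than the paper's in two respects: you spell out the self-adjointness of $\mathcal{L}_m^0=-(\partial_x+\ii m l_0)^2+\rho$ and name the spectral distance $\delta_m$ (making precise why ``no zero eigenvalue'' upgrades to a bounded inverse), and you give a direct injectivity/surjectivity argument for $\Pi A_1\Pi$ rather than just citing the closed range theorem. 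One harmless redundancy: in your surjectivity step the correction $b=-\langle G_0,\Phi\rangle$ is automatically zero since $\Phi\in{\rm range}(\Pi)=\{G_0,G_1\}^{\perp}$, so the auxiliary $F_1$-shift is never actually needed.
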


	\begin{proof}
		Under the non-degeneracy condition (\ref{non-degeneracy-2}) which yields $c_g \neq \pm 1$, the entries of $A_m(\omega,c_g)$ are all non-singular. To ensure the invertibility of $A_m(\omega_0,c_g)$ with $m \in \mathbb{N}_{\rm odd} \backslash\{1\}$, we consider the resolvent equation
		$$
		A_m(\omega_0,c_g) \left( \begin{array}{l} v\\ w \end{array} \right) =
		\left( \begin{array}{l} f \\ g \end{array} \right),
		$$
		for a given $(f,g) \in \tilde{R}$. The solution $(v,w) \in \tilde{D}$ is given by $w = f$ and $v$ obtained from the scalar Schr\"{o}dinger equation
		$$
		L_m v = g - M_m f.
		$$
		Under the non-resonance conditions (\ref{non-resonance}), $0$ is in the spectral gap of $L_m$ making the linear operator $L_m : H^2_{\rm per} \mapsto L^2 $ is invertible with a bounded inverse from $L^2 $ to $H^2_{\rm per}$. Hence $v = L_m^{-1} (g - M_m f)$ and $A_m(\omega_0,c_g)$ is invertible with a bounded inverse from $\tilde{R}$ to $\tilde{D}$.

		The operator $A_1(\omega_0,c_g)$ is not invertible due to the double zero eigenvalue in Lemma \ref{lem-linear}, However, it is a Fredholm operator of  index  zero and
		hence by the closed range theorem there exists a solution of the inhomogeneous equation
		$$
		A_1(\omega_0,c_g) \left( \begin{array}{l} v\\ w \end{array} \right) =  \Pi
		\left( \begin{array}{l} f \\ g \end{array} \right),
		$$
		for every $(f,g) \in \tilde{R}$. The solution is not uniquely determined
		since ${\rm span}(F_0,F_1)$ can be added to the solution $(v,w) \in \tilde{D}$,
		however, the restriction to the subset defined by the condition
		$$
		\Pi   \left( \begin{array}{l} v\\ w \end{array} \right) = \left( \begin{array}{l} v\\ w \end{array} \right)
		$$
		removes projections to ${\rm span}(F_0,F_1)$. Consequently, the operator $(\Pi A_1(\omega_0,c_g) \Pi)$ is invertible with a bounded inverse from $\tilde{R}$ to $\tilde{D}$.
	\end{proof}

	In the limit $\delta \to  0$ of Lemma \ref{lem-constant} we can calculate the eigenvalues $\lambda$ of $A_m(\omega_0,c_g)$ graphically, see Figure \ref{figure4}, and explicitly. The following lemma summarizes the key properties of eigenvalues which are needed for Assumption \ref{manifoldsass} in Theorem \ref{thm1}.

	\begin{figure}
		\centering
	\includegraphics[width=0.5\linewidth]{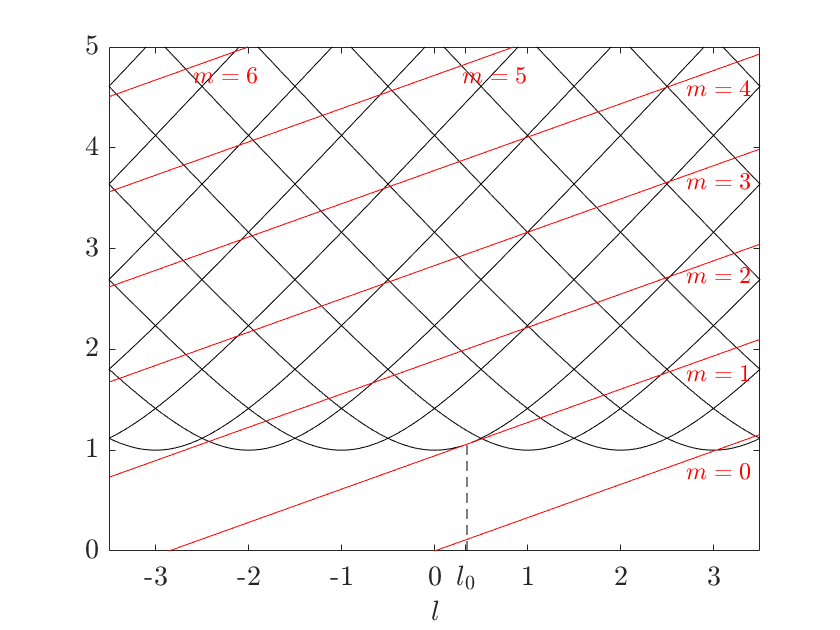}
	\caption{Purely imaginary eigenvalues $ \lambda = \ii (l-ml_0), l\in \RR,$ as roots of the nonlinear equation (\ref{speclin0}) can be obtained graphically as intersections of the curves $ l \mapsto \omega_n(l)$ and $ l \mapsto m\omega_0 +  c_g (l-ml_0)$ for $n\in \mathbb{N}$ and $l \in \mathbb{B}$. For $\rho\equiv 1$ we have $\omega_n(l)=\sqrt{1+(n+l)^2}$
		(not ordered by magnitude) and recall that $\omega_n$ is 1-periodic. Due to the symmetry about the $l$-axis we plot only the upper part. We choose $l_0=0.35$ and $\omega_0=\omega_{1}(l_0)\approx 1.06.$}
	\label{figure4}
\end{figure}

\begin{lemma}
	\label{lem-eigenvalues-constant}
	Let $\rho \equiv 1$. For every fixed $m \in \mathbb{N}_{\rm odd}$, the operator $A_m(\omega_0,c_g)$ has purely imaginary eigenvalues,  Jordan blocks of which have length at most two, and complex semi-simple eigenvalues with nonzero real parts bounded away from zero. Moreover, if $l_0 \in \mathbb{R} \backslash \mathbb{Q}$, then all nonzero, purely imaginary eigenvalues are semi-simple.
\end{lemma}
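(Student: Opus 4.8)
The plan is to exploit that for $\rho\equiv1$ the operator $A_m(\omega_0,c_g)$ has constant coefficients and hence block‑diagonalises over the Fourier basis $\{e^{\ii kx}\}_{k\in\ZZ}$ of $L^2(0,2\pi)$: in the basis $\{(e^{\ii kx},0),(0,e^{\ii kx})\}_{k\in\ZZ}$ of $\tilde{R}$ it becomes $\bigoplus_{k\in\ZZ}\mathcal{A}_k$ with the $2\times2$ companion matrices
\[
\mathcal{A}_k=\begin{pmatrix}0&1\\ \ell_k&\mu_k\end{pmatrix},\qquad
\ell_k=\frac{(k+ml_0)^2+1-m^2\omega_0^2}{1-c_g^2},\quad
\mu_k=\frac{2\ii\,(mc_g\omega_0-k-ml_0)}{1-c_g^2},
\]
read off from \eqref{A-m}, where $1-c_g^2=\omega_0^{-2}>0$ by \eqref{E:c_om_gam}. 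The characteristic polynomial $\lambda^2-\mu_k\lambda-\ell_k$ of $\mathcal{A}_k$ is, up to the positive factor $1-c_g^2$, exactly the quadratic in $\lambda$ obtained by inserting $V(x)=e^{\ii kx}$ into \eqref{linear-eq}; since the roots escape to $\pm\ii\infty$ as $|k|\to\infty$ without a finite accumulation point, $\sigma(A_m(\omega_0,c_g))$ is precisely the union over $k\in\ZZ$ of the root pairs of these quadratics. Because a $2\times2$ companion matrix is non‑derogatory and is never a scalar matrix (its $(1,2)$‑entry equals $1$), each $\mathcal{A}_k$ contributes, for any prescribed eigenvalue, at most one Jordan block, of length $\le2$; Jordan structure being additive over the direct sum, every Jordan block of $A_m(\omega_0,c_g)$ has length at most two.

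Next I would run a discriminant dichotomy. Writing $a:=k+ml_0$, the discriminant of $\lambda^2-\mu_k\lambda-\ell_k$ is the real number
\[
\Delta_k=\mu_k^2+4\ell_k=\frac{4}{(1-c_g^2)^2}\bigl((1-c_g^2)-(c_g a-m\omega_0)^2\bigr).
\]
Since $\mu_k$ is purely imaginary, the roots $\tfrac12(\mu_k\pm\sqrt{\Delta_k})$ are purely imaginary when $\Delta_k\le0$ and have opposite nonzero real parts $\pm\tfrac12\sqrt{\Delta_k}$ when $\Delta_k>0$. Now $\Delta_k>0$ is equivalent to $|c_g a-m\omega_0|<\sqrt{1-c_g^2}$, which confines $a=k+ml_0$ to an interval of length $2\sqrt{1-c_g^2}/|c_g|$ (with $c_g=(n_0+l_0)/\omega_0\neq0$ since $l_0\notin\ZZ$), hence holds for only finitely many $k$. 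Therefore $A_m(\omega_0,c_g)$ has at most finitely many eigenvalues off the imaginary axis; each such eigenvalue has nonzero real part, so being finitely many they are bounded away from $\ii\RR$, and since $\Delta_k\neq0$ wherever they occur they arise only as simple roots of the blocks containing them and are thus semisimple. The infinitely many $k$ with $\Delta_k\le0$ contribute purely imaginary eigenvalues, so these always exist. This gives the asserted classification, the semisimplicity of the complex eigenvalues, and the bound on their real parts.

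For the last statement, an eigenvalue of $A_m(\omega_0,c_g)$ fails to be semisimple exactly when $\Delta_k=0$ for some block $\mathcal{A}_k$, as only then is a nontrivial Jordan chain created. For $\rho\equiv1$ one has $\omega_0^2=1+(n_0+l_0)^2$ and $c_g\omega_0=n_0+l_0$, so $c_g a-m\omega_0=\omega_0^{-1}\bigl((n_0+l_0)(k-mn_0)-m\bigr)$; using $1-c_g^2=\omega_0^{-2}$, the equation $\Delta_k=0$ becomes $\bigl((n_0+l_0)(k-mn_0)-m\bigr)^2=1$, i.e.
\[
(n_0+l_0)\,(k-mn_0)=m\pm1.
\]
If $l_0\notin\QQ$ then $n_0+l_0\notin\QQ$, which forces $k-mn_0=0$ and $m\pm1=0$, hence $m=1$ and $k=n_0$; the corresponding double root is $\lambda=0$. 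Thus for irrational $l_0$ the only block with $\Delta_k=0$ occurs for $m=1$ and produces precisely the zero eigenvalue of Lemma~\ref{lem-linear}; every nonzero purely imaginary eigenvalue is then a simple root of each block containing it, hence semisimple.

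The computations involved are routine; the one point requiring care is the bookkeeping of the Jordan structure, namely that a length‑two Jordan block can arise only from a repeated root $\Delta_k=0$ inside a single block $\mathcal{A}_k$, while a coincidence of eigenvalues between distinct blocks $\mathcal{A}_{k_1}\neq\mathcal{A}_{k_2}$ merely increases the algebraic multiplicity without lengthening any Jordan chain.
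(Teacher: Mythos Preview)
Your proof is correct and follows essentially the same route as the paper: Fourier block-diagonalisation of $A_m(\omega_0,c_g)$ into $2\times2$ companion matrices, explicit computation of the characteristic polynomial, and a discriminant dichotomy. The paper phrases this via the nonlinear eigenvalue relation \eqref{speclin0} and the substitution $\kappa=k-mn_0$, arriving at $\lambda=-\ii\kappa\omega_0^2\pm\ii\omega_0\sqrt{(m-\kappa(n_0+l_0))^2-1}$, which is equivalent to your roots $\tfrac12(\mu_k\pm\sqrt{\Delta_k})$; your condition $(n_0+l_0)(k-mn_0)=m\pm1$ for a length-two block is exactly the paper's $(m-\kappa(n_0+l_0))^2=1$. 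Your final paragraph on the Jordan bookkeeping---distinguishing a repeated root within one block from a coincidence across blocks---is a point the paper leaves implicit, so your treatment is if anything slightly more careful there. One inessential slip: the claim $c_g\neq0$ does not follow from $l_0\notin\ZZ$ alone (it needs $n_0+l_0\neq0$), but in the case $c_g=0$ your discriminant formula gives $\Delta_k<0$ for all $k$, so there are no complex eigenvalues and the conclusion holds vacuously.
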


\begin{proof}
	Eigenvalues of $A_m(\omega_0,c_g)$ are found as solutions of the nonlinear equations \eqref{speclin0}. For $\rho\equiv 1$ we use Fourier series (\ref{Fourier-2}) and write $\omega_n^2(l)=1+(n+l)^2$
	with $n = k \in \mathbb{Z}$. After simple manipulations, eigenvalues $\lambda$ are found from
	\begin{equation*}
		\left( \omega_0^{-1} \lambda + \ii \omega_0 (k-mn_0) \right)^2 =
		1 + k^2 + 2l_0 mk - m^2 (\omega_0^2 - l_0^2) - \omega_0^2 (k - m n_0)^2, \quad k \in \mathbb{Z},
	\end{equation*}
	where $m \in \mathbb{N}_{\rm odd}$, $n_0 \in \mathbb{N}$, and $l_0 \in \mathbb{B}$ are fixed and where $\omega_0 = \sqrt{1 + (n_0+l_0)^2}$.
	Setting $\kappa := k - m n_0$, we get
	\begin{equation*}
		\left( \omega_0^{-1} \lambda + \ii \omega_0 \kappa \right)^2 =
		1 - (m-\kappa(n_0+l_0))^2.
	\end{equation*}
	Eigenvalues $\lambda$ are found explicitly as
	\begin{equation*}
		\lambda = - \ii \kappa \omega_0^2 \pm \ii \omega_0 \sqrt{(m-\kappa(n_0+l_0))^2 - 1}.
	\end{equation*}
	For each $k \in \mathbb{Z}$, the value of $\kappa \in \mathbb{Z}$ is fixed.
	Eigenvalues are double if
	$$
	(m-\kappa(n_0+l_0))^2 - 1 = 0
	$$
	for some $m \in \mathbb{N}_{\rm odd}$ and $\kappa \in \mathbb{Z}$,
	in which case the Jordan blocks have length two.
	If $l_0 \in \mathbb{R} \backslash \mathbb{Q}$ and $\kappa\neq 0$, then $(m-\kappa(n_0+l_0))^2 - 1 \neq 0$ and the eigenvalues are semi-simple, in which case there are no Jordan blocks.

	If $\kappa = 0$, then $\lambda = \pm \ii \omega_0 \sqrt{m^2-1}$ which includes a double zero eigenvalue for $m = 1$ and pairs of semi-simple purely imaginary eigenvalues. If $\kappa \neq 0$, then complex eigenvalues off the imaginary axis arise for each $(m,\kappa)\in \NN_{\rm odd}\times \ZZ$ with $|m - \kappa(n_0 + l_0)| < 1$; the two complex eigenvalues appear in pairs symmetrically about $\ii \mathbb{R}$. Therefore, complex eigenvalues with nonzero real parts are semi-simple. Moreover, ${\rm Im}(\lambda) = -\kappa \omega_0^2$ and hence $|{\rm Im}(\lambda)| \geq \omega_0^2$ for each complex eigenvalue.
\end{proof}

\begin{remark}
	\label{remark-D}
	Conditions (\ref{E:zero_ev_cond2}) ensure that
	$$
	\sqrt{(m-\kappa(n_0+l_0))^2 - 1} \neq \kappa \omega_0, \quad 1 \leq m \leq 2N+1, \quad \kappa \in \NN.
	$$
	As a result, the purely imaginary non-zero eigenvalues of Lemma \ref{lem-eigenvalues-constant} are bounded away from zero by
	$$
	D_m := |\omega_0|\inf_{\kappa \in \NN} | \sqrt{(m-\kappa(n_0+l_0))^2 - 1} - \kappa \omega_0 | > 0, \quad 1 \leq m \leq 2N+1.
	$$
	The inequality $D_m>0$ follows from the fact that
	$$ | \sqrt{(m-\kappa(n_0+l_0))^2 - 1} - \kappa \omega_0 | \sim \kappa(\sqrt{1+(n_0+l_0)^2}-n_0-l_0)$$ as $\kappa\to\infty$, where $\omega_0=\sqrt{1+(n_0+l_0)^2}$ has been used. We do not need invertibility of $A_m(\omega_0,c_g)$ as $m \to \infty$ since we only use the near-identity transformations for $1 \leq m \leq 2N+1$. Therefore, we do not need to investigate whether $D_m \to 0$ as $m \to \infty$.
\end{remark}

In the next two subsections we proceed with near identity transformations by using the bounds (\ref{bound-on-resolvent}) and prove the following theorem.

\begin{theo}
	\label{theorem-transform}
	There exists $\varepsilon_0 > 0$ such that for every $\varepsilon \in (-\varepsilon_0,\varepsilon_0)$, there exists a sequence of near-identity transformations which transforms system (\ref{E:sep_prob}) to the following form:
	\begin{subequations}\label{trans_prob}
		\beq\label{trans_prob_q}
		\left( \begin{array}{c}  				\partial_\xi  q_1\\
			\partial_\xi  q_0 - q_1 \end{array} \right) =
		\sum_{j=1}^{N} \veps^{2j} Z^{(0)}_j(q_0,q_1,S_1^{(N)},{\bf V}^{(N)}_{\geq 3}) + \veps^{2N+2} Z^{(0)}_{N+1}(q_0,q_1,S_1^{(N)},{\bf V}^{(N)}_{\geq 3}),
		\eeq\\
		\beq\label{trans_prob_Y1}
		\partial_\xi  S_1^{(N)} = \Pi A_1(\omega_0,c_g) S_1^{(N)}
		+ \sum_{j=1}^{N} \veps^{2j} Z^{(1)}_j(q_0,q_1,S_1^{(N)},{\bf V}^{(N)}_{\geq 3}) + \veps^{2N+2} Z^{(1)}_{N+1}(q_0,q_1,S_1^{(N)},{\bf V}^{(N)}_{\geq 3}),
		\eeq
		and for $m \in \NN_{\rm odd} \backslash \{1\}$,
		\beq\label{trans_prob_Yk}
		\partial_\xi   Y_m^{(N)} = A_m(\omega_0,c_g) Y_m^{(N)}
		+ \sum_{j=1}^{N} \veps^{2j} Z^{(m)}_j(q_0,q_1,S_1^{(N)},{\bf V}^{(N)}_{\geq 3}) + \veps^{2N+2} Z^{(m)}_{N+1}(q_0,q_1,S_1^{(N)},{\bf V}^{(N)}_{\geq 3}),
		\eeq
		where $Z^{(m)}_j(q_0,q_1,0,{\bf 0}) = 0$ for every $1 \leq j \leq N$ and $m \in\mathbb{N}_{\rm odd}$. The variables $S_1^{(N)}, {\bf V}^{(N)}_{\geq 3}$, and $ Y_m^{(N)}$ are obtained from $S_1, {\bf V}_{\geq 3}$, and $ Y_m$ via $N$ near-identity transformations depending on $q_0$ and $ q_1$, e.g.,
		$$
		S_1^{(N)} = S_1 + \varepsilon^2 \Phi^{(2)}(q_0,q_1) + \varepsilon^4 \Phi^{(4)}(q_0,q_1) + \dots + \varepsilon^{2N} \Phi^{(2N)}(q_0,q_1),
		$$
		where $\Phi^{(2j)}$ depends polynomially on $q_0,\bar{q}_0, q_1,$ and $\bar{q}_1$, and analogously for $ {\bf V}_{\geq 3}^{(N)}$ and $ Y_m^{(N)}$. Moreover, the transformations preserve the reversibility of the system, cf. Section \ref{S:reversib}.
	\end{subequations}
\end{theo}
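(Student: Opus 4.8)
The plan is to construct the $N$ transformations iteratively, removing one power of $\varepsilon^2$ at a time, in the classical Poincaré--Birkhoff normal form spirit but adapted to the block structure \eqref{E:sep_prob_q}--\eqref{E:sep_prob_Yk}. At step $j$ (for $j=1,\dots,N$) we assume system \eqref{E:sep_prob} has already been brought to the form \eqref{trans_prob} with the sums running up to $j-1$ and an $\mathcal{O}(\veps^{2j})$ remainder; we then look for a near-identity change of variables
\[
S_1^{(j)} = S_1^{(j-1)} + \veps^{2j}\Phi^{(2j)}(q_0,q_1), \qquad
Y_m^{(j)} = Y_m^{(j-1)} + \veps^{2j}\Psi_m^{(2j)}(q_0,q_1), \quad m\in\NN_{\rm odd}\setminus\{1\},
\]
with $\Phi^{(2j)}, \Psi_m^{(2j)}$ polynomial in $q_0,\bar q_0, q_1,\bar q_1$, and we leave the two-dimensional $(q_0,q_1)$-variables unchanged. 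Differentiating this substitution along \eqref{trans_prob_Y1}, \eqref{trans_prob_Yk} and using that $\partial_\xi q_1, \partial_\xi q_0$ are themselves $\mathcal{O}(\veps^2)$ by \eqref{E:sep_prob_q}, the $\mathcal{O}(\veps^{2j})$ terms that do not yet depend on $(S_1,{\bf V}_{\geq 3})$ — call them $Z^{(1)}_j(q_0,q_1,0,{\bf 0})$ and $Z^{(m)}_j(q_0,q_1,0,{\bf 0})$ — must be cancelled by the \emph{homological equations}
\[
\Pi A_1(\omega_0,c_g)\Pi\,\Phi^{(2j)} + Z^{(1)}_j(q_0,q_1,0,{\bf 0}) = 0, \qquad
A_m(\omega_0,c_g)\,\Psi_m^{(2j)} + Z^{(m)}_j(q_0,q_1,0,{\bf 0}) = 0.
\]
By Lemma \ref{lem-invert}, $\Pi A_1\Pi$ and $A_m$ ($3\le m\le 2N+1$) are invertible with uniformly bounded inverse from $\tilde R$ to $\tilde D$, so we simply set $\Phi^{(2j)} = -(\Pi A_1\Pi)^{-1}Z^{(1)}_j(\cdot,0,{\bf 0})$ and $\Psi_m^{(2j)} = -A_m^{-1}Z^{(m)}_j(\cdot,0,{\bf 0})$; polynomial dependence on $q_0,q_1$ is preserved because these are bounded linear operators acting on coefficients that are polynomials in $q_0,\bar q_0,q_1,\bar q_1$. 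The remaining $\mathcal{O}(\veps^{2j})$ pieces, which \emph{do} vanish when $(S_1^{(j)},{\bf V}^{(j)}_{\geq 3})=0$, are absorbed into the next $Z^{(m)}_{j}$ together with the newly generated higher-order terms of order $\veps^{2(j+1)}$ and above; the substitution also feeds back into the $(q_0,q_1)$-equation, updating $Z^{(0)}_j$. After $N$ steps everything of order higher than $\veps^{2N+2}$ is collected into the single remainder $Z^{(m)}_{N+1}$.

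Two points require care. First, one must track that only finitely many Fourier modes $m$ get involved at each order: the cubic convolution $({\bf V}\ast{\bf V}\ast{\bf V})_m$ couples modes additively, so starting from modes $\{\pm1\}$ the terms reachable after finitely many normal-form steps and cubic nonlinearities lie in $\{m : |m|\le 2N+1\}$, and this is precisely why \eqref{non-resonance} and Lemma \ref{lem-invert} are only needed for $m\le 2N+1$ — as emphasized after Remark \ref{rem37} and in Figure \ref{newpic1}. Second, one must verify that each transformation respects the reversibility \eqref{reversibility-transform} and the symmetry \eqref{E:k-sym}; this holds because the homological operators $\Pi A_1\Pi$ and $A_m$ intertwine with the reversor (the reversor acts on $A_m(\omega_0,c_g)$ by conjugation into $-A_m$ composed with complex conjugation, since $c_g$ is the group velocity and $\omega_0$ real), and the inhomogeneities $Z^{(m)}_j$ inherit the correct transformation behavior from the reversibility of the starting system, so choosing $\Phi^{(2j)},\Psi_m^{(2j)}$ by inverting these operators automatically yields equivariant transformations; restricting the polynomials to the reversible-equivariant ones is consistent because the obstruction (the range condition) is automatically satisfied on that subspace.

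The main obstacle is bookkeeping rather than a genuine analytic difficulty: one has to organize the iteration so that applying a transformation of order $\veps^{2j}$ does not destroy the normal-form structure already achieved at orders $\veps^2,\dots,\veps^{2(j-1)}$, and so that the generated higher-order terms — including those coming from the $\veps^2$-dependence of $\partial_\xi q_0, \partial_\xi q_1$ and from the $B_m$ terms in \eqref{E:sep_prob} — are consistently reshuffled into $Z^{(\cdot)}_{j+1},\dots,Z^{(\cdot)}_{N+1}$. A clean way to do this is to prove by induction on $j$ the precise statement: \emph{after $j$ transformations, the system has the form \eqref{trans_prob} with the sums running to $j$, with each $Z^{(m)}_i$ ($i\le j$) satisfying $Z^{(m)}_i(q_0,q_1,0,{\bf 0})=0$, with all $Z^{(m)}_i$ polynomial in $(q_0,\bar q_0,q_1,\bar q_1)$ with coefficients in the appropriate function spaces, reversibility-equivariant, and supported on $|m|\le 2N+1$ for the genuinely nonlinear-in-$(S_1,{\bf V})$ part.} The base case $j=0$ is \eqref{E:sep_prob} itself (after noting $H_m(q_0,q_1,0,{\bf 0})$ is \emph{not} zero, so the $j=1$ step is where the first homological equation is actually solved); the inductive step is exactly the construction above. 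Uniformity of $\varepsilon_0$ follows from the uniform bound $C_0$ in \eqref{bound-on-resolvent} together with the fact that only finitely many ($N$) steps are performed and each deals with polynomials of bounded degree.
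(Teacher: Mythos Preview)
Your overall strategy is correct and matches the paper, but your homological equation is wrong because of a concrete slip: you claim ``$\partial_\xi q_1, \partial_\xi q_0$ are themselves $\mathcal{O}(\veps^2)$ by \eqref{E:sep_prob_q}'', whereas \eqref{E:sep_prob_q} only gives $\partial_\xi q_1 = \mathcal{O}(\veps^2)$ and $\partial_\xi q_0 - q_1 = \mathcal{O}(\veps^2)$. Thus $\partial_\xi q_0 = q_1 + \mathcal{O}(\veps^2)$ is \emph{not} small --- this is precisely the Jordan block at the double zero eigenvalue of $A_1(\omega_0,c_g)$. When you differentiate $Y_m^{(j)} = Y_m^{(j-1)} + \veps^{2j}\Psi_m^{(2j)}(q_0,\bar q_0,q_1,\bar q_1)$ along the flow, the nilpotent piece $\veps^{2j}(q_1\partial_{q_0} + \bar q_1\partial_{\bar q_0})\Psi_m^{(2j)}$ therefore appears at the \emph{same} order $\veps^{2j}$ and belongs in the homological equation, which should read
\[
A_m(\omega_0,c_g)\,\Psi_m^{(2j)} \;-\; (q_1\partial_{q_0}+\bar q_1\partial_{\bar q_0})\Psi_m^{(2j)} \;+\; Z^{(m)}_j(q_0,q_1,0,{\bf 0}) \;=\; 0,
\]
and analogously for $\Phi^{(2j)}$. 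Your one-shot formula $\Psi_m^{(2j)} = -A_m^{-1}Z^{(m)}_j(\cdot,0,{\bf 0})$ does not satisfy this.

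The fix, which is what the paper actually carries out in Sections~\ref{sec-34}--\ref{sec-35}, is to expand $\Psi_m^{(2j)}$ in monomials $q_0^{k_1}\bar q_0^{k_2}q_1^{k_3}\bar q_1^{k_4}$ and observe that the operator $q_1\partial_{q_0}+\bar q_1\partial_{\bar q_0}$ is nilpotent on polynomials of fixed total degree: it strictly raises the $(q_1,\bar q_1)$-degree. Ordering the coefficients by this degree yields a finite \emph{triangular} system: at each level one solves an equation of the form $L_m h = (\text{known from lower levels})$ --- the paper writes out the two-component recursion $g_j = -a_j + (M{+}1{-}j)h_{j-1}$, $L_m h_j = -b_j - M_m g_j + (M{+}1{-}j)g_{j-1}$ --- and invertibility of $L_m$ (equivalently of $A_m$ and of $\Pi A_1(\omega_0,c_g)\Pi$, via Lemma~\ref{lem-invert}) closes the recursion. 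Once this correction is made, the remainder of your argument (that only modes $|m|\le 2N+1$ are generated by the cubic convolution, that reversibility is preserved, and the inductive bookkeeping in $j$) goes through essentially as you describe.
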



\begin{remark} \label{rem37}
	It is well known that in the equation for $ u_j $ with eigenvalue $ i \lambda_j $
	a term of the form  $ u_{j_1}^{n_{j_1}} \ldots u_{j_r}^{n_{j_r}} $ can be eliminated by a near identity transformation if the non-resonance condition $  \lambda_j  - \sum_{k=1}^r
	n_{j_k}  \lambda_{n_{j_k}} \neq 0 $ is satisfied, see Sec. 3.3 in \cite{GH83}. Since the eigenvalues for the $ (q_0,q_1) $-part vanish and since the eigenvalues for the $ S_1^{(N)} $-part and the  $ Y_m^{(N)} $-part do not vanish, all  polynomial terms in $  (q_0,q_1) $ can be eliminated
	in the equations for the $ S_1^{(N)} $and $ Y_m^{(N)} $. This elimination is done by Theorem \ref{theorem-transform} up to order $ \mathcal{O}(\veps^{2N}) $.
	Some detailed calculations can be found in the subsequent Sections \ref{sec-34} and \ref{sec-35}.
\end{remark}

\begin{remark} \label{rem38}
	The condition $Z^{(m)}_j(q_0,q_1,0,{\bf 0}) = 0$ for every $1 \leq j \leq N$ and $m \in\mathbb{N}_{\rm odd}$ corresponds to $ \widetilde{N}_r(\widetilde{u}_0,0)=0 $ in system (\ref{cm2}).
	Ignoring the higher order terms
	$$ \veps^{2N+2} Z^{(m)}_{N+1}(q_0,q_1,S_1^{(N)},{\bf V}^{(N)}_{\geq 3}) $$ for $m \in\mathbb{N}_{\rm odd}$
	in \eqref{trans_prob_Y1} and \eqref{trans_prob_Yk}
	gives an invariant subspace $ \{ (S_1^{(N)},{\bf V}^{(N)}_{\geq 3}) = (0,{\bf 0})\} $ in which
	an approximate  homoclinic solution for the full system  can be found.
	It bifurcates for $\varepsilon \neq 0$ from the trivial solution for $\varepsilon = 0$.
\end{remark}

\subsection{Removing polynomial terms in $q_0$ and $q_1$ from (\ref{E:sep_prob_Yk})}
\label{sec-34}

In order to show how the near-identity transformations reduce
(\ref{E:sep_prob_Yk}) to (\ref{trans_prob_Yk}), we consider
a general inhomogeneous term $\veps^{2j} q_0^{k_1} \bar{q}_0^{k_2} q_1^{k_3} \bar{q}_1^{k_4}$ in the right-hand side of (\ref{E:sep_prob_Yk})
with some $1 \leq j \leq N$ and positive integers $k_1, k_2, k_3, k_4$.
The transformations are produced sequentially,
from terms of order  $\mathcal{O}(\veps^2)$ to terms of order $\mathcal{O}(\veps^{2N})$ and for each polynomial order in $(q_0,q_1)$.

At the lowest order $j = 1$, there exists only one inhomogeneous term in (\ref{E:sep_prob_Yk}) for $m = 3$, cf. Remark \ref{rem-1},
which is given by
\begin{align*}
	H_3^{(q)} = -\gamma (1 - c_g^2)^{-1}  \left( \begin{array}{c} 0 \\
		r (q_0 f_{n_0} - \ii q_1 \partial_l f_{n_0})^3 \end{array} \right).
\end{align*}
Hence, $H_3=H_3^{(q)} + H_3^\text{rest}$, where $H_3^\text{rest} = 0$
if $(S_1, {\bf V}_{\geq 3}) = (0,{\bf 0})$. Substituting $Y_3 = \tilde{Y}_3  + \veps^2 \mathfrak{Y}_3$ with
\begin{align*}
	\mathfrak{Y}_3 := h_0 \left( \begin{array}{c} q_0^3 \\
		3 q_0^2 q_1 \end{array} \right) + h_1 \left( \begin{array}{c} q_0^2 q_1 \\
		2 q_0 q_1^2 \end{array} \right) + h_2 \left( \begin{array}{c} q_0 q_1^2 \\
		q_1^3 \end{array} \right)
	+ h_3 \left( \begin{array}{c} q_1^3 \\
		0 \end{array} \right)
\end{align*}
into (\ref{E:sep_prob_Yk}) yields
\begin{align*}
	\partial_\xi   \tilde{Y}_3 = A_3(\omega_0,c_g) \tilde{Y}_3
	+ \veps^2 \widetilde{\omega} (1-c_g^2)^{-1} B_3 \tilde{Y}_3 + \veps^2
	\tilde{H}_3.
\end{align*}
The choice of the second components in each term of $\mathfrak{Y}_3 $ is dictated by the fact that $\partial_\xi q_0-q_1$ and $\partial_\xi q_1$
are of the order of ${\mathcal O}(\varepsilon^2)$ due to equation \eqref{E:sep_prob_q}. We are looking for scalar functions $h_{j} \in H^2_{\rm per}$ from the sequence of linear inhomogeneous equations obtained with the help of (\ref{A-m}):
\begin{align*}
	q_0^3 : & \quad L_3 h_0 = \gamma (1-c_g^2)^{-1} r f_{n_0}^3, \\
	q_0^2 q_1 : & \quad L_3 h_1 = - 3 \ii \gamma (1-c_g^2)^{-1} r f_{n_0}^2 \partial_l f_{n_0} - 3 M_3 h_0, \\
	q_0 q_1^2 : & \quad L_3 h_2 = - 3 \gamma (1-c_g^2)^{-1} r f_{n_0} (\partial_l f_{n_0})^2 - 2 M_3 h_1 + 6 h_0, \\
	q_1^3 : & \quad L_3 h_3 = \ii \gamma (1-c_g^2)^{-1} r (\partial_l f_{n_0})^3 -  M_3 h_2 + 2 h_1.
\end{align*}
Since $L_3$ is invertible with a bounded inverse
by Lemma \ref{lem-invert}, there exist unique functions
$h_{j} \in H^2_{\rm per}$ which are obtained recursively
from $h_0$ to $h_3$. After the inhomogeneous terms are removed by
the choice of $h_{j}$, the transformed right-hand side $\tilde{H}_3 $
becomes
\begin{align*}
	\tilde{H}_3  &= H_3^{{\rm rest}} -h_0 \left( \begin{array}{c} 3 q_0^2 (\dot{q}_0 - q_1) \\
		6 q_0 q_1 (\dot{q}_0 - q_1) + 3 q_0^2 \dot{q}_1 \end{array} \right)
	- h_1 \left( \begin{array}{c} 2 q_0 q_1 (\dot{q}_0 - q_1) + q_0^2 \dot{q}_1\\
		2 (\dot{q}_0 - q_1) q_1^2 + 4 q_0 q_1 \dot{q}_1 \end{array} \right) \\
	& \quad - h_2 \left( \begin{array}{c} (\dot{q}_0 - q_1) q_1^2 + 2 q_0 q_1 \dot{q}_1 \\
		3 q_1^2 \dot{q}_1 \end{array} \right)
	-  h_3 \left( \begin{array}{c} 3 q_1^2 \dot{q}_1 \\
		0 \end{array} \right) + \veps^2 \widetilde{\omega}(1-c_g^2)^{-1} B_3 \mathfrak{Y}_3,
\end{align*}
where $H_3^\text{rest}$ is also modified due to the transformation.
Substituting for $\dot{q}_0-q_1$ and $\dot{q}_1$ from (\ref{E:sep_prob_q}) shows that $\tilde{H}_3(q_0,q_1,0,{\bf 0}) = \mathcal{O}(\veps^2)$, hence the first step of the procedure transforms (\ref{E:sep_prob_Yk}) into (\ref{trans_prob_Yk}) with $N = 1$. One can then define $\tilde{H}_3 = \tilde{H}_3^{(q)} + \tilde{H}_3^\text{rest}$ with  $\tilde{H}_3^\text{rest} = 0$ if $(S_1, {\bf V}_{\geq 3}) = (0,{\bf 0})$
and proceed with next steps of the procedure.

A general step of this procedure is performed similarly. Without loss of generality, since the principal part of system (\ref{E:sep_prob_q}) is independent of $\bar{q}_0$ and $\bar{q}_1$, we consider a general polynomial
of degree $M$ at fixed $m \in \{3,5,\dots,2N+1\}$:
$$
H_m^{(q)} = \sum_{j=0}^{M} q_0^{M-j} q_1^{j} \left( \begin{array}{c} a_j \\
	b_j \end{array} \right),
$$
where $(a_j,b_j)^T$ depend on $x$ only. Substituting
\begin{align*}
	Y_m = \tilde{Y}_m  + \veps^2 \mathfrak{Y}_m, \qquad
	\mathfrak{Y}_m := \sum_{j=0}^{M} q_0^{M-j} q_1^{j} \left( \begin{array}{c} h_j \\
		g_j \end{array} \right)
\end{align*}
into (\ref{E:sep_prob_Yk}) yields (\ref{trans_prob_Yk}) with $N$ being incremented by one if $h_m,g_m \in H^2_{\rm per}$ are found from two chains of recurrence equations for $j \in \{0,1,\dots,M\}$:
\begin{align*}
	g_j &= -a_j + (M+1-j) h_{j-1}, \\
	L_m h_j &= -b_j - M_m g_j + (M+1-j) g_{j-1},
\end{align*}
which are truncated at $h_{-1} = g_{-1} = 0$. Since $L_m$ for $3 \leq m \leq 2N+1$ are invertible with a bounded inverse by Lemma \ref{lem-invert}, the recurrence equations are uniquely solvable from $g_0$ to $h_0$, then to $g_1$ and $h_1$ and so on to $g_M$ and $h_M$. The aforementioned first step is obtained from here with $M = 3$ and $a_0 = a_1 = a_2 = a_3 = 0$.

\subsection{Removing polynomial terms in $q_0$ and $q_1$  from (\ref{E:sep_prob_Y1})}
\label{sec-35}

Similarly, we perform near-identity transformations which reduce (\ref{E:sep_prob_Y1}) to (\ref{trans_prob_Y1}). The only complication is the presence of the projection operator $\Pi$ in system (\ref{E:sep_prob_Y1}).

At  lowest order $j = 1$, there exist two inhomogeneous terms in (\ref{E:sep_prob_Y1}) due to $\Pi B_1 Y_1$ and $\Pi H_1$, which can be written without the projection operator $\Pi$ as follows:
\begin{align*}
	& \frac{\widetilde{\omega}}{1-c_g^2} \left( q_0 B_1 F_0 + q_1 B_1 F_1 \right)
	- \frac{\gamma}{1-c_g^2} \left( \begin{array}{c} 0 \\
		3 r (q_0 f_{n_0} - \ii q_1 \partial_l f_{n_0})^2 (\bar{q}_0 \bar{f}_{n_0} + \ii \bar{q}_1 \partial_l \bar{f}_{n_0}) \end{array} \right) \\
	& =: q_0 {\bf H}^{(0)} + q_1 {\bf H}^{(1)} +
	|q_0|^2 q_0 {\bf H}^{(2)} + q_0^2 \bar{q}_1 {\bf H}^{(3)} + |q_0|^2 q_1 {\bf H}^{(4)}
	+ q_0 |q_1|^2 {\bf H}^{(5)} +  \bar{q}_0 q_1^2 {\bf H}^{(6)} + |q_1|^2 q_1 {\bf H}^{(7)}.
\end{align*}
Substituting $S_1 = \tilde{S}_1  + \veps^2 \mathfrak{S}_1$ with
\begin{align*}
	\mathfrak{S}_1 := q_0 {\bf S}^{(0)} + q_1 {\bf S}^{(1)}
	+ |q_0|^2 q_0 {\bf S}^{(2)}
	+  q_0^2 \bar{q}_1 {\bf S}^{(3)} + |q_0|^2 q_1 {\bf S}^{(4)}
	+ q_0 |q_1|^2 {\bf S}^{(5)} + \bar{q}_0 q_1^2  {\bf S}^{(6)} + |q_1|^2 q_1 {\bf S}^{(7)},
\end{align*}
where $\Pi {\bf S}^{(j)} = {\bf S}^{(j)}$, into (\ref{E:sep_prob_Y1}) yields
\begin{align*}
	\partial_\xi   \tilde{S}_1 = \Pi A_1(\omega_0,c_g) \tilde{S}_1
	+ \veps^2 \widetilde{\omega} (1-c_g^2)^{-1} \Pi B_1 \tilde{S}_1 + \veps^2
	\Pi \tilde{H}_1,
\end{align*}
where $\tilde{H}_1$ is of the next $\mathcal{O}(\veps^2)$ order
if ${\bf S}^{(0)}, {\bf S}^{(1)}$ are chosen from
the system of inhomogeneous equations
\begin{align*}
	q_0 : & \qquad 		\Pi A_1(\omega_0,c_g) {\bf S}^{(0)} = - \Pi {\bf H}^{(0)}, \\
	q_1 : & \qquad 		\Pi A_1(\omega_0,c_g) {\bf S}^{(1)} = -\Pi {\bf H}^{(1)} +  {\bf S}^{(0)},
\end{align*}
and ${\bf S}^{(2)}, \dots, {\bf S}^{(7)}$ are chosen from
the system of inhomogeneous equations
\begin{align*}
	|q_0|^2 q_0 : & \qquad 		\Pi A_1(\omega_0,c_g) {\bf S}^{(2)} = -\Pi
	{\bf H}^{(2)}, \\
	q_0^2 \bar{q}_1 : & \qquad 		\Pi A_1(\omega_0,c_g) {\bf S}^{(3)} = -\Pi
	{\bf H}^{(3)} + {\bf S}^{(2)}, \\
	|q_0|^2 q_1 : & \qquad 		\Pi A_1(\omega_0,c_g) {\bf S}^{(4)} = -\Pi
	{\bf H}^{(4)} + 2 {\bf S}^{(2)}, \\
	q_0 |q_1|^2: & \qquad 	\Pi A_1(\omega_0,c_g) {\bf S}^{(5)} = -\Pi
	{\bf H}^{(5)} + 2 {\bf S}^{(3)} + {\bf S}^{(4)}, \\
	\bar{q}_0^2 q_1^2 : & \qquad 	\Pi A_1(\omega_0,c_g) {\bf S}^{(6)} = -\Pi
	{\bf H}^{(6)} + {\bf S}^{(4)}, \\
	|q_1|^2 q_1 : & \qquad 	\Pi A_1(\omega_0,c_g) {\bf S}^{(7)} = -\Pi
	{\bf H}^{(7)} + {\bf S}^{(5)} + {\bf S}^{(6)}.
\end{align*}
Since $\Pi A_1(\omega_0,c_g) \Pi$ is invertible with a bounded inverse by Lemma \ref{lem-invert}, the two chains of equations are uniquely solvable: from ${\bf S}^{(0)}$ to ${\bf S}^{(1)}$ and from ${\bf S}^{(2)}$ to ${\bf S}^{(7)}$.

It is now straightforward that a general step of the recursive procedure can be performed to reduce (\ref{E:sep_prob_Y1}) to (\ref{trans_prob_Y1}).

\section{Construction of the local center-stable manifold}
\label{sec-4}

By Theorem \ref{theorem-transform}, the spatial dynamical system
can be transformed to the form (\ref{trans_prob}), where the coupling
of $(q_0,q_1) \in \mathbb{C}^2$ with $S_1^{(N)} \in \tilde{D}$ and $Y_m^{(N)} \in \tilde{D}$ for $m \in \NN_{\rm odd} \backslash \{0\}$ occurs at order
$\mathcal{O}(\veps^{2N+2})$. We are now looking for
solutions of system (\ref{trans_prob}) for $\xi$ on $[0,\xi_0]$ for some $\veps$-dependent value $\xi_0 > 0$. In order to produce the bound (\ref{property-2}),
we will need to extend the result to $\xi_0 = \veps^{-(2N+1)}$.

The local center-stable manifold on $[0,\xi_0]$ will be constructed close
to the homoclinic orbit of the system
\begin{equation}
	\label{truncated-system}
	\left( \begin{array}{c} \partial_\xi  q_1\\ \partial_\xi  q_0 - q_1  \end{array} \right) =
	\sum_{j=1}^{N} \veps^{2j} Z^{(0)}_j(q_0,q_1,0,{\bf 0})
\end{equation}
which is a truncation of (\ref{trans_prob_q}). The leading-order term $Z_1^{(0)}(q_0,q_1,0,{\bf 0})$ is computed explicitly as
\begin{align*}
	Z_1^{(0)}(q_0,q_1,0,{\bf 0}) & = \frac{2\widetilde{\omega}}{\omega_0\omega_{n_0}''(l_0)}
	\left( \begin{array}{c} - \omega_0 q_0 +
		\ii ( \omega_0 \langle f_{n_0}, \partial_l f_{n_0} \rangle + c_g) q_1 \\
		\langle \partial_l f_{n_0}-\ii \nu f_{n_0}, f_{n_0} \rangle (\ii\omega_0 q_0 + c_g q_1)
		+  \omega_0 \langle \partial_l f_{n_0}-\ii \nu f_{n_0}, \partial_lf_{n_0} \rangle q_1 \end{array} \right) \\
	& \quad - \frac{3 \gamma}{\omega_0\omega_{n_0}''(l_0)}
	\left( \begin{array}{cc}
		\langle f_{n_0},  r  |q_0 f_{n_0} - \ii q_1 \partial_l f_{n_0} |^2 (q_0 f_{n_0} - \ii q_1 \partial_l f_{n_0}) \rangle \\
		\langle \ii (\partial_l f_{n_0}-\ii \nu f_{n_0}), r  |q_0 f_{n_0} - \ii q_1 \partial_l f_{n_0} |^2 (q_0 f_{n_0} - \ii q_1 \partial_l f_{n_0}) \rangle \end{array} \right).
\end{align*}
The stationary NLS equation (\ref{stat-NLS}) for $A$ rewritten as a first order system for $q_0(\xi) = A(X)$ with $X = \veps \xi$ and $q_1(\xi) = \veps A'(X)$ is
\begin{equation}
	\label{truncated-system-cut}
	\left( \begin{array}{c}
		\partial_\xi  q_1 \\
		\partial_\xi  q_0 - q_1 \end{array} \right) = \veps^2 (\omega_{n_0}''(l_0))^{-1}
	\left( \begin{array}{c}
		-2 \widetilde{\omega} q_0 - \gamma_{n_0}(l_0) |q_0|^2 q_0 \\
		0  \end{array} \right),
\end{equation}
or equivalently
\begin{equation}
	\label{truncated-system-cutA}
	A'' = (\omega_{n_0}''(l_0))^{-1} (-2 \widetilde{\omega} A - \gamma_{n_0}(l_0) |A|^2 A ),
\end{equation}
where $\omega_{n_0}''(l_0) \neq 0$ due to the non-degeneracy condition (\ref{non-degeneracy-2}). Equation \eqref{truncated-system-cut} is the leading order (in $\varepsilon$) part of \eqref{truncated-system} if $q_0= \mathcal{O}(1)$ and $q_1= \mathcal{O}(\varepsilon)$.

The following lemma gives persistence of the sech solution  (\ref{E:NLS-soliton}) of the reduced system \eqref{truncated-system-cut} with $(q_0,q_1) = (A(\varepsilon \cdot), \varepsilon A'(\varepsilon \cdot))$ as a solution of the truncated system (\ref{truncated-system}).

\begin{lemma}
	\label{lemma-homoclinic-orbit}
	Assume (\ref{non-degeneracy-2}) and let $A$ be given by  (\ref{E:NLS-soliton}). For each $N
	\in \mathbb{N}$ and a sufficiently small $\veps>0 $, there exists
	a unique homoclinic orbit of system (\ref{truncated-system})
	satisfying the properties
	\begin{equation}
		{\rm Im}~q_0(0) = 0, \quad {\rm Re}~q_1(0) = 0.
		\label{reversibility-homoclinic-orbit}
	\end{equation}
	such that
	\begin{equation}
		\| q_0 - A(\veps \cdot) \|_{L^{\infty}} \leq C \veps, \quad
		\| q_1 - \veps A'(\veps \cdot) \|_{L^{\infty}} \leq C \veps^2,
		\label{proximity-homoclinic orbit}
	\end{equation}
	and
	\begin{equation}
		|q_0(\xi)| \leq C e^{-\veps \alpha |\xi|}, \quad  |q_1(\xi)| \leq \veps C e^{-\veps \alpha |\xi|}, \quad  \forall \xi \in \mathbb{R},
		\label{bound-homoclinic-orbit}
	\end{equation}
	for some $\alpha > 0$ and $C > 0$.
\end{lemma}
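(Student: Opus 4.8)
The plan is to regard (\ref{truncated-system}) as an analytic \emph{regular} perturbation of the stationary NLS system and to produce the homoclinic orbit by an implicit function / shooting argument into which the reversibility constraint is built from the start. First I would pass to the slow variable $X=\veps\xi$ and set $q_0(\xi)=a(X)$, $q_1(\xi)=\veps b(X)$. Since the right-hand side of (\ref{trans_prob_q}), hence of (\ref{truncated-system}), is $\mathcal{O}(\veps^2)$, this substitution turns (\ref{truncated-system}) into
\begin{equation*}
	\frac{d}{dX}\begin{pmatrix} a \\ b \end{pmatrix} = \begin{pmatrix} b \\ \Phi_0(a) \end{pmatrix} + \veps\, \Phi_1(a,b;\veps), \qquad \Phi_0(a) = \bigl(\omega_{n_0}''(l_0)\bigr)^{-1}\bigl(-2\widetilde{\omega}\, a - \gamma_{n_0}(l_0)\, |a|^2 a\bigr),
\end{equation*}
with $\Phi_0,\Phi_1$ analytic; the right-hand side vanishes at $(a,b)=(0,0)$ for every $\veps$ (the leading term $Z^{(0)}_1$ is linear plus cubic and all $Z^{(0)}_j$ carry no constant term), and at $\veps=0$ the system is the first-order form of (\ref{truncated-system-cutA}). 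Its only positive, decaying homoclinic to $0$ is, up to translation, the profile $(A,A')$ with $A$ given by (\ref{E:NLS-soliton}); the shift is fixed by $A'(0)=0$, so that $A''(0)=-\gamma_1\gamma_2^2\neq 0$. The linearisation at the origin at $\veps=0$ has eigenvalues $\pm\gamma_2$, so $0$ is hyperbolic for small $\veps$ and $A$ decays like $e^{-\gamma_2|X|}$.

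Next I would encode reversibility. By Theorem \ref{theorem-transform} the near-identity transformations preserve reversibility, so the rescaled system is reversible under the involution $R:(a,b)\mapsto(\bar a,-\bar b)$ inherited from (\ref{reversibility-transform}), with fixed subspace $\mathrm{Fix}(R)=\{a\in\RR,\; b\in\ii\RR\}$, a real $2$-plane in $\CC^2$; the constraints (\ref{reversibility-homoclinic-orbit}) say precisely that $(a(0),b(0))\in\mathrm{Fix}(R)$. It therefore suffices to find, for small $\veps\ge 0$, a point of $\mathrm{Fix}(R)$ whose forward orbit tends to $0$, i.e.\ lies on the local stable manifold $W^s_\veps$ of the origin; the $R$-reflection of that orbit supplies the backward half, and the two halves join into an $R$-symmetric homoclinic. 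I would phrase this as a boundary value problem on $[0,\infty)$ for the rescaled system, with the asymptotic condition $(a,b)(X)\to 0$ and the boundary condition $(a(0),b(0))\in\mathrm{Fix}(R)$, using $(A,A')$ as reference solution at $\veps=0$ (which indeed lies on $W^s_0$ and has $(A(0),A'(0))=(\gamma_1,0)\in\mathrm{Fix}(R)$).

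The core of the argument is the $\veps=0$ linearisation of this boundary value problem about $(A,A')$. It is Fredholm of index $0$ ($\dim W^s_0=2=\dim\mathrm{Fix}(R)$ in $\RR^4$), and its kernel among exponentially decaying perturbations is two-dimensional, spanned by the translation mode $(A',A'')$ and the $SO(2)$-phase mode $(\ii A,\ii A')$; these are the two bounded zero modes of the variational equation, the real and imaginary P\"oschl--Teller zero modes $L_+A'=0$, $L_-A=0$. The decisive observation is that neither survives the boundary condition at $X=0$: $(A'(0),A''(0))=(0,-\gamma_1\gamma_2^2)$ has a \emph{real}, nonzero second component, and $(\ii A(0),\ii A'(0))=(\ii\gamma_1,0)$ has an \emph{imaginary}, nonzero first component, so neither lies in $\mathrm{Fix}(R)$. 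Hence the boundary condition removes the full kernel, the linearised boundary value problem is invertible on the weighted space $e^{-\alpha X}L^\infty([0,\infty))$ for any fixed $\alpha\in(0,\gamma_2)$, uniformly for small $\veps$. Writing $a=A+\veps a_1$, $b=A'+\veps b_1$, the $\veps$-remainder evaluated at $(A,A')$ is $\mathcal{O}(\veps)$ in this weighted norm (it is $\veps$ times a smooth quantity vanishing at the origin, and $(A,A')$ decays exponentially), so the implicit function theorem yields, for all small $\veps>0$, a unique $(a_1,b_1)$ in the weighted space with $\|a_1\|+\|b_1\|\le C$. This is the solution on $[0,\infty)$; it satisfies (\ref{reversibility-homoclinic-orbit}) at $X=0$ by construction, decays like $e^{-\alpha X}$, and its $R$-reflection is a homoclinic orbit of (\ref{truncated-system}) on all of $\RR$. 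Undoing $q_0(\xi)=a(\veps\xi)$, $q_1(\xi)=\veps b(\veps\xi)$ gives (\ref{proximity-homoclinic orbit}) and (\ref{bound-homoclinic-orbit}) (with $\alpha$ relabelled); uniqueness near the NLS profile is the uniqueness in the implicit function theorem, and among the NLS homoclinics $e^{\ii\theta}A(\cdot-X_0)$ only $X_0=0$, $\theta\in\{0,\pi\}$ meet the reversibility constraint, so the orbit is unique up to the overall sign.

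The main obstacle is exactly this kernel degeneracy: the variational equation about the sech profile has a genuine two-dimensional kernel, translation together with the $SO(2)$-phase rotation, so a naive implicit function theorem is obstructed; the argument closes only because the two conditions in (\ref{reversibility-homoclinic-orbit}) are precisely complementary to these two zero modes. A secondary technical point is uniformity in $\veps$ of the exponential decay and of the invertibility of the linearised problem, which forces one to work in the exponentially weighted space and to use that, after the substitution $q_1=\veps b$, (\ref{truncated-system}) is a regular rather than singular perturbation of the NLS system (the $\veps_0>0$ produced this way depends on $N$).
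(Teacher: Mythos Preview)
Your proposal is correct and follows essentially the same approach as the paper. Both arguments rescale to the slow variable $X=\veps\xi$ with $q_1=\veps b$ to obtain a regular perturbation of the stationary NLS system, identify the two decaying zero modes $(A',A'')$ and $(\ii A,\ii A')$ of the linearisation about the sech profile, and verify the key transversality: in the coordinates $(\mathrm{Re}\,a,\mathrm{Im}\,a,\mathrm{Re}\,b,\mathrm{Im}\,b)$ these modes point along $(0,0,1,0)$ and $(0,1,0,0)$ at $X=0$, while $\mathrm{Fix}(R)$ is spanned by $(1,0,0,0)$ and $(0,0,0,1)$, so the reversibility constraint kills the full kernel. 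The only difference is packaging: the paper phrases existence as persistence of a transversal intersection of the two-parameter family $e^{\ii\theta}q_{\mathrm{hom}}(\cdot+\xi_0)$ with $\mathrm{Fix}(R)$ and then runs a separate implicit function argument in $H^1(\RR)$ for the approximation bound (\ref{proximity-homoclinic orbit}), whereas you combine existence, the bound, and the exponential decay (\ref{bound-homoclinic-orbit}) into a single implicit function step in an exponentially weighted $L^\infty$ space on $[0,\infty)$, extending to $\RR$ by $R$-reflection.
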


\begin{proof}
	Since
	\begin{equation}
		\label{tech-eq}
		q_0 F_0(x) + q_1 F_1(x) = \left(\begin{array}{cc} q_0 f_{n_0}(l_0,x) - \ii q_1 \partial_l f_{n_0}(l_0,x) \\ q_1 f_{n_0}(l_0,x)
		\end{array} \right),
	\end{equation}
	and since the Fourier coefficients of $f_{n_0}(l_0,x)$ and $\partial_l f_{n_0}(l_0,x)$ are real by Remark \ref{remark-rev},
	the condition (\ref{reversibility-homoclinic-orbit}) expresses the reversibility condition (\ref{reversibility-constraint}) for $m = 1$
	in the linear combination (\ref{tech-eq}).
	The reduced system (\ref{truncated-system-cut}) has two symmetries: if $(q_0(\xi),q_1(\xi))$ is a solution, so is
	\begin{equation}
		\label{symmetry-truncated}
		(q_0(\xi+\xi_0) e^{\ii \theta_0},q_1(\xi+\xi_0) e^{\ii \theta_0})
	\end{equation}
	for real $\xi_0$ and $\theta_0$.
	In the scaling
	$q_0(\xi) = \check{q}_0(X)$ with $X = \veps \xi$ and $q_1(\xi) = \veps \check{q}_1(X)$ system (\ref{truncated-system})  is of the form
	\begin{align}
		\left( \begin{array}{c} \partial_X \check{q}_1\\ \veps^{-1}\left(\partial_X \check{q}_0 - \check{q}_1\right)  \end{array} \right) &  =
		\sum_{j=1}^{j=N} \veps^{2j-2} Z^{(0)}_j(\check{q}_0,\veps \check{q}_1,0,{\bf 0}) \notag \\ & = \left( \begin{array}{c} (\omega_{n_0}''(l_0))^{-1} (-2 \widetilde{\omega} \check{q}_0 - \gamma_{n_0}(l_0) |\check{q}_0|^2 \check{q}_0 ) \\ 0 \end{array} \right) + \mathcal{O}(\veps). 		\label{truncated-system1}
	\end{align}
	For $ \veps = 0 $ there   is a homoclinic orbit $ q_{\rm hom} $ for system \eqref{truncated-system-cut} which is given by $(\check{q}_0,\check{q}_1) = (A,A')$ with $A$ in (\ref{E:NLS-soliton}). The existence of a homoclinic orbit for small $ \veps > 0 $ can be established with the following reversibility argument.
	For $ \veps = 0 $ in the point $ (\check{q}_0^*,\check{q}_1^*) = (\gamma_1,0)$
	with $\gamma_1 = \sqrt{2 |\widetilde{\omega}|  / |\gamma_{n_0}(l_0)|}$,
	the family of homoclinic orbits $ e^{i \theta} q_{\rm hom}(\cdot + \xi) $
	intersects  the fixed space of reversibility
	\begin{equation*}
		{\rm Im}~\check{q}_0(0) = 0, \quad {\rm Re}~\check{q}_1(0) = 0
	\end{equation*}
	transversally. This can be seen as follows, see Figure \ref{fig-homoclinic}.

	In  the coordinates $ (\textrm{Re} \check{q}_0, \textrm{Im} \check{q}_0, \textrm{Re} \check{q}_1, \textrm{Im} \check{q}_1) $ the fixed space of reversibility lies in the span
	of $ (1,0,0,0) $ and $ (0,0,0,1) $. The tangent space at the family of homoclinic orbits $ e^{i \theta} q_{\rm hom}(\cdot + \xi) $ in $ (\check{q}_0^*,\check{q}_1^*) $ is spanned
	by the $ \xi $-tangent vector which is proportional to $ (0,0,1,0) $
	and the $ \theta $-tangent vector which is proportional to $ (0,1,0,0) $.
	Since the vector field of \eqref{truncated-system1}  depends smoothly on the small parameter
	$ 0 < \veps \ll 1$ this intersection persists
	under adding higher order
	terms, i.e. for small $ \veps > 0 $. Thus, the reversibility operator gives a
	homoclinic orbit for \eqref{truncated-system1} for small $ \veps > 0 $, too.
	Undoing the scaling gives the homoclinic orbit
	for the truncated system (\ref{truncated-system}) with
	the exponential decay (\ref{bound-homoclinic-orbit}).

	\begin{figure}[htbp] 
		\centering
		\setlength{\unitlength}{1cm}
		\begin{picture}(12, 5.5)
			\put(3,2){fixed space of reversibility}
			\put(9.5,2){$ \textrm{Re} \ q_0 $}
			\put(1.8,4.5){$ \textrm{Re}  \ q_1 $}
			\put(5,4.9){homoclinic orbit}
			\put(1,2.5){\vector(1,0){9}}
			\put(1.5,0){\vector(0,1){5}}
			\put(1,1){\qbezier(0.4,1.4)(8,6)(8,1)}
		\end{picture}
		\caption{Transversal intersection of the homoclinic orbit of system (\ref{truncated-system}) with the fixed space of the reversibility operator.}
		\label{fig-homoclinic}
	\end{figure}
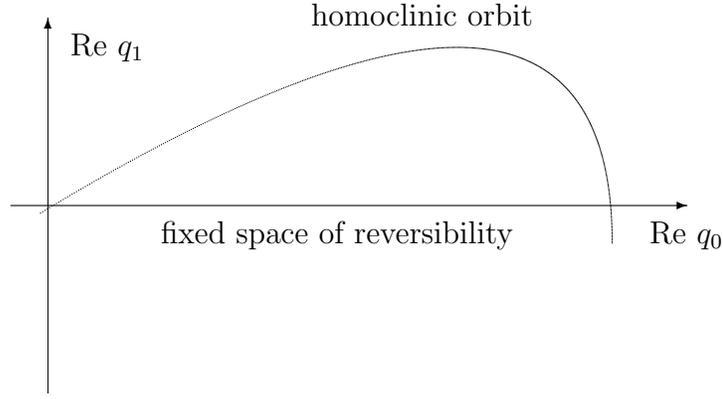

	It remains to prove the approximation bound (\ref{proximity-homoclinic orbit}).
	The symmetry (\ref{symmetry-truncated}) generates the two-dimensional kernel
	of the linearized operator associated with the leading-order part of
	the truncated system (\ref{truncated-system}):
	\begin{equation}
		\label{symmetry-modes}
		\left(\begin{array}{cc} q_0 \\ q_1 \end{array} \right) = \left(\begin{array}{cc} A'(X) \\ \veps A''(X) \end{array} \right) \quad \mbox{\rm and} \quad
		\left(\begin{array}{cc} q_0 \\ q_1 \end{array} \right) = \ii \left(\begin{array}{cc} A(X) \\ \veps A'(X) \end{array} \right)
	\end{equation}
	The symmetry modes (\ref{symmetry-modes}) do not satisfy the reversibility constraints (\ref{reversibility-homoclinic-orbit})
	because $A(0) \neq 0$ and $A''(0) \neq 0$, whereas the truncated system
	(\ref{truncated-system}) inherits the reversibility symmetry (\ref{reversibility-homoclinic-orbit})
	of the original dynamical system (\ref{spat-dyn}). Therefore, if
	we substitute the decomposition
	$$
	\left(\begin{array}{cc} q_0(\xi) \\ q_1(\xi) \end{array} \right)  =
	\left(\begin{array}{cc} A(\veps \xi) \\ \veps A'(\veps \xi) \end{array} \right)
	+ \left(\begin{array}{cc} \mathfrak{q}_0(\xi) \\ \veps \mathfrak{q}_1(\xi) \end{array} \right)
	$$
	into (\ref{truncated-system}), then the correction term
	$(\mathfrak{q}_0,\mathfrak{q}_1)$ satisfies the nonlinear system
	where the residual terms of the order of $\mathcal{O}(\veps)$, see
	(\ref{truncated-system1}), are automatically orthogonal to the kernel
	of the linearized operator. By the implicit theorem in Sobolev space $H^1(\RR)$, one can uniquely solve the nonlinear system for the correction term $(\mathfrak{q}_0,\mathfrak{q}_1)$ under the reversibility
	constraints (\ref{reversibility-homoclinic-orbit}) such that
	$$
	\| \mathfrak{q}_0 \|_{H^1} + \| \mathfrak{q}_1 \|_{H^1} \leq C \veps,
	$$
	for some $\veps$-independent $C > 0$. This yields the
	approximation bound (\ref{proximity-homoclinic orbit}) in the original variables due to the Sobolev embedding of $H^1(\RR)$ into $L^{\infty}(\RR)$.
\end{proof}

\begin{remark}
	The approximation bound (\ref{proximity-homoclinic orbit}) yields the estimate (\ref{property-4}) in Theorem \ref{thm1}, where
	$$
	h(\xi,z,x) = \veps q_0(\xi) f_{n_0}(l_0,x) e^{\ii z} - \ii \varepsilon q_1(\xi) \partial_l f_{n_0}(l_0,x) e^{\ii z} + {\rm c. c.}
	$$
	with $q_0$ and $q_1$ from Lemma \ref{lemma-homoclinic-orbit}.
\end{remark}

\begin{remark}
	Referring to system (\ref{cm1}),  we have now constructed the
	homoclinic solution for the approximate reduced system
	$$
	\partial_{\xi} \widetilde{u}_{0} =  M_{0}\widetilde{u}_{0} + \widetilde{N}_{0}(\widetilde{u}_{0},0).
	$$
	It remains to prove the persistence of the homoclinic solutions as generalized breather solutions under  considering the higher order terms
	$ \veps^{2N+2} Z^{(m)}_{N+1}(q_0,q_1,S_1^{(N)},{\bf V}^{(N)}_{\geq 3}) $
	in \eqref{trans_prob_Y1} and \eqref{trans_prob_Yk}
	for $m \in\mathbb{N}_{\rm odd}$ which lead to
	$(S_1^{(N)},{\bf V}^{(N)}_{\geq 3}) \neq (0,{\bf 0})$.
	We do so by constructing a center-stable manifold nearby the approximate
	homoclinic solution, cf. the rest of Section \ref{sec-4}, and by proving that the center-stable manifold intersects the fixed space of reversibility transversally, cf. Section \ref{sec-5}.
\end{remark}

Let us denote the $\veps$-dependent reversible homoclinic orbit of Lemma \ref{lemma-homoclinic-orbit} by $(Q_0,\veps Q_1)$ and introduce the
decomposition $(q_0,q_1) = (Q_0,\veps Q_1) + (\mathfrak{q}_0,\veps \mathfrak{q}_1)$. We abbreviate  ${\bf c}_{0,hom} := (Q_0,Q_1,\overline{Q_0}, \overline{Q_1})$ and  ${\bf c}_{0,r} := (\mathfrak{q}_0, \mathfrak{q}_1,\overline{\mathfrak{q}_0}, \overline{\mathfrak{q}_1})$. Furthermore, we collect the components $S_1^{(N)}, Y^{(N)}_m$ for $m \in \mathbb{N}_{\rm odd} \backslash \{1\}$ in  ${\bf c}_r$. With these notations system (\ref{trans_prob}) can now be  rewritten
in the abstract form:
\begin{subequations}
	\label{semilinear-4-5}
	\begin{align}
		\label{semilinear-4}
		\partial_\xi  {\bf c}_{0,r} & = \veps
		\Lambda_0(\xi) {\bf c}_{0,r} + \veps {\bf G}({\bf c}_{0,r},{\bf c}_r) +
		\epsilon^{2N+1} {\bf G}_R({\bf c}_{0,hom} + {\bf c}_{0,r},{\bf c}_r), \\
		\label{semilinear-5}
		\partial_\xi  {\bf c}_r & = \Lambda_r(\xi)
		{\bf c}_r + \veps^2 {\bf F}({\bf c}_{0,hom} + {\bf c}_{0,r},{\bf c}_r) + \veps^{2N+2} {\bf F}_R({\bf c}_{0,hom} + {\bf c}_{0,r},{\bf c}_r),
	\end{align}
\end{subequations}
where the vector ${\bf c}_{0,r}(\xi) \in \mathbb{C}^4$ is controlled
in the norm $\| \cdot \|_{\mathbb{C}^4}$, whereas
the vector ${\bf c}_r(\xi)$ is controlled in the phase space $\cD$
defined by (\ref{phase-space}) with the norm $\| \cdot \|_{\cD}$. The operator $ \varepsilon \Lambda_0 $ is the linearization around the homoclinic orbit ${\bf c}_{0,hom}$ and hence  ${\bf G} $ depends nonlinearly on ${\bf c}_{0,r}$. Note that including the complex conjugated variables in ${\bf c}_{0,r}$ is needed in order for the linearized system $\partial_\xi  {\bf c}_{0,r} = \veps\Lambda_0(\xi) {\bf c}_{0,r}$ to be linear with respect to the complex vector field.

\begin{remark}
	At leading order in $\veps$ the matrix $\Lambda_0$ is derived from
	(\ref{truncated-system1}) in the form
	$$
	\Lambda_0 = \begin{pmatrix}
		K & 0 \\ 0 & K
	\end{pmatrix}
	- \gamma_{n_0}(l_0) (\omega_{n_0}''(l_0))^{-1}\begin{pmatrix}
		M_1(A) &	M_2(A)\\
		M_2(A) &	M_1(A)
	\end{pmatrix}
	+ \mathcal{O}(\veps),
	$$
	where
	$$
	K = \begin{pmatrix}
		0 & 1 \\ -2 \widetilde{\omega} (\omega_{n_0}''(l_0))^{-1} & 0
	\end{pmatrix}, \quad
	M_1(A) = \begin{pmatrix}
		0 & 0 \\ 2A^2 & 0
	\end{pmatrix}, \quad
	M_2(A) = \begin{pmatrix}
		0 & 0 \\ A^2 & 0
	\end{pmatrix},
	$$
	using the fact that $A$ is real.
\end{remark}

\begin{remark}
	Although not indicated by our  notation, the operators  $\Lambda_0$, $\Lambda_r$ and the functions ${\bf G}$, ${\bf G}_R$, ${\bf F}$, and ${\bf F}_R$ depend on $\xi$ and $\veps$ continuously.
\end{remark}

\begin{remark}
	We lose one power
	of $\veps$ in front of $ {\bf G} $ and $ {\bf G}_R $ by working with $ Q_1 $ instead of $ \veps Q_1 $.
\end{remark}

\subsection{Residual terms of system (\ref{semilinear-4-5}).} Residual terms  are controlled as follows.

\begin{lemma}
	\label{lemma-residual}
	There exists $\veps_0 > 0$ such that for every $\veps \in (0,\veps_0)$ the residual terms of system (\ref{semilinear-4-5}) satisfy the bounds
	for every $\xi \in \RR$:
	\begin{align*}
		\| {\bf G}({\bf c}_{0,r},{\bf c}_r)(\xi) \|_{\CC^4} &\leq C \left( \| {\bf c}_{0,r}(\xi) \|^2_{\CC^4} + \| {\bf c}_r(\xi) \|_{\cD} \right), \\
		\| {\bf F}({\bf c}_{0,hom} + {\bf c}_{0,r},{\bf c}_r) (\xi)\|_{\cR} &\leq C  \left(  \| {\bf c}_{0,hom}(\xi) + {\bf c}_{0,r}(\xi) \|_{\CC^4} + \| {\bf c}_r(\xi) \|_{\cD}  \right) \| {\bf c}_r
		\|_{\cD}, \\
		\| {\bf G}_R({\bf c}_{0,hom} + {\bf c}_{0,r},{\bf c}_r)(\xi) \|_{\CC^4} &\leq C \left( \| {\bf c}_{0,hom}(\xi) + {\bf c}_{0,r}(\xi) \|_{\CC^4} + \| {\bf c}_r(\xi) \|_{\cD} \right), \\
		\| {\bf F}_R({\bf c}_{0,hom} + {\bf c}_{0,r},{\bf c}_r)(\xi) \|_{\cR} &\leq C \left(  \| {\bf c}_{0,hom}(\xi) + {\bf c}_{0,r}(\xi) \|_{\CC^4} + \| {\bf c}_r(\xi) \|_{\cD}  \right),
	\end{align*}
	as long as $\|  {\bf c}_{0,r}(\xi) \|_{\CC^4} + \| {\bf c}_r(\xi) \|_{\cD} \leq C$,
	where $C > 0$ is a generic $\veps$-independent constant,
	which may change from  line to  line.
\end{lemma}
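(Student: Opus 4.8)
The plan is to reduce everything to two ingredients: (a) the residual terms \({\bf G},{\bf F},{\bf G}_R,{\bf F}_R\) are, after unfolding the \(N\) near-identity transformations of Theorem \ref{theorem-transform} together with the decomposition \((q_0,q_1)=(Q_0,\veps Q_1)+(\mathfrak q_0,\veps\mathfrak q_1)\), finite sums of multilinear (polynomial) expressions in the components of \({\bf c}_{0,r}\in\CC^4\) and of \({\bf c}_r\), whose coefficients are the fixed, uniformly bounded operators of the problem — multiplication by \(\rho,r\in L^\infty(\TT)\), by the Bloch profiles \(f_{n_0},\partial_l f_{n_0}\in H^2_{\rm per}(\TT)\) and by the transformation profiles \(h_j,g_j,{\bf S}^{(j)}\in H^2_{\rm per}(\TT)\) generated in Sections \ref{sec-34}--\ref{sec-35}, together with the matrix blocks \(B_m\); and (b) a product estimate in the mixed-norm spaces \(\cD,\cR\). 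With (a) and (b) in hand the lemma reduces to reading off, for each residual, its order of vanishing in \({\bf c}_{0,r}\) and \({\bf c}_r\) from the structure guaranteed by Theorem \ref{theorem-transform}.

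First I would record the product estimate. The first component of \(\cD\), namely \(\ell^{2,2}(\ZZ,L^2)\cap\ell^{2,1}(\ZZ,H^1_{\rm per})\cap\ell^{2,0}(\ZZ,H^2_{\rm per})\), is precisely \(H^2(\TT^2_{z,x})\) under the Fourier series (\ref{Fourier}), hence a Banach algebra with \(H^2(\TT^2)\hookrightarrow L^\infty(\TT^2)\); consequently the cubic convolution--multiplication \(({\bf V},{\bf V},{\bf V})\mapsto r\,({\bf V}\ast{\bf V}\ast{\bf V})\) is a bounded trilinear map from that space into \(\ell^{2,0}(\ZZ,L^2)=L^2(\TT^2)\), which is exactly the slot of \(\cR\) into which the nonlinearity of (\ref{spat-dyn}) feeds. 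Multiplication by an \(x\)-only factor in \(H^2_{\rm per}(\TT)\) maps \(\cD\) to \(\cD\), multiplication by \(\rho,r\in L^\infty(\TT)\) maps the \(L^2(\TT^2)\)-slot to itself (the merely-\(L^\infty\) regularity costs nothing since the nonlinearity sits only in the lowest-regularity component), the \(B_m\), \(1\le m\le 2N+1\), are bounded (only finitely many modes are transformed), and the projection \(\Pi=\mathrm{id}-\langle G_0,\cdot\rangle F_1-\langle G_1,\cdot\rangle F_0\) adds only bounded rank-two corrections built from the smooth \(F_0,F_1,G_0,G_1\). Hence every multilinear building block is uniformly bounded on the set \(\|{\bf c}_{0,r}\|_{\CC^4}+\|{\bf c}_r\|_{\cD}\le C\); recalling \(\|{\bf c}_{0,hom}(\xi)\|_{\CC^4}\le C e^{-\veps\alpha|\xi|}\le C\) from Lemma \ref{lemma-homoclinic-orbit}, all ``total amplitude'' quantities are bounded on the working regime, so that \(\|{\bf c}_r\|_{\cD}^{k}\le C\|{\bf c}_r\|_{\cD}\) and \(\|{\bf c}_{0,hom}+{\bf c}_{0,r}\|_{\CC^4}^{k}\le C\|{\bf c}_{0,hom}+{\bf c}_{0,r}\|_{\CC^4}\) for every \(k\ge1\) may be used freely.

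It then remains to match orders of vanishing. Since \(\veps\Lambda_0\) is by construction the full linearization of the truncated \(q\)-system (\ref{truncated-system-cut})--(\ref{truncated-system}) about the homoclinic orbit \({\bf c}_{0,hom}\), the remainder \({\bf G}\) splits into a polynomial in \({\bf c}_{0,r}\) vanishing to second order at \({\bf c}_{0,r}=0\) (when \({\bf c}_r=0\)), with coefficients polynomial in the bounded \({\bf c}_{0,hom}\), plus the \({\bf c}_r\)-dependent part of \(\sum_j\veps^{2j}Z^{(0)}_j\), which vanishes at \({\bf c}_r=0\) and has degree \(\le3\) in \({\bf c}_r\); the product estimate and \(\|{\bf c}_r\|_{\cD}^{k}\le C\|{\bf c}_r\|_{\cD}\) give \(\|{\bf G}\|_{\CC^4}\le C(\|{\bf c}_{0,r}\|^2_{\CC^4}+\|{\bf c}_r\|_{\cD})\). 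For \({\bf F}\): by \(Z^{(m)}_j(q_0,q_1,0,{\bf 0})=0\) for \(1\le j\le N\) and \(m\in\NN_{\rm odd}\), every monomial of \({\bf F}\) carries at least one factor drawn from \({\bf c}_r\); since the underlying nonlinearity is cubic, the remaining factors supply at least one more power of the total amplitude, which yields \(\|{\bf F}\|_{\cR}\le C(\|{\bf c}_{0,hom}+{\bf c}_{0,r}\|_{\CC^4}+\|{\bf c}_r\|_{\cD})\|{\bf c}_r\|_{\cD}\) (the only \(q\)-only contributions, coming from the untransformed modes \(m>2N+1\) where the cubic convolution of the transformed profiles can reach higher Fourier order, are already of size \(\mathcal{O}(\veps^{2N+2})\) and are therefore carried by \({\bf F}_R\), not \({\bf F}\)). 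Finally \({\bf G}_R,{\bf F}_R\), coming from the untruncated \(Z^{(m)}_{N+1}\) and the leftover high-\(m\) source, obey no vanishing constraint, so the product estimate directly gives the stated linear-in-amplitude bounds.

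The main obstacle is bookkeeping rather than analysis: one must track, through the \(N\) sequential transformations of Section \ref{sec-3}, exactly which of \(q_0,q_1,S_1^{(N)},Y^{(N)}_m\) appears in each monomial of \(Z^{(m)}_j\) and with what power of \(\veps\), in order to certify both the factor-of-\({\bf c}_r\) structure of \({\bf F}\) and the claim that the residual \(q\)-only sources at \(m>2N+1\) enter only at order \(\veps^{2N+2}\). The analytic ingredient is genuinely routine once one notes that the nonlinearity feeds only the lowest-regularity component of \(\cR\); the one point requiring slight care is that \(\cD\) and \(\cR\) are anisotropic, so one verifies the product estimate in each mixed norm rather than invoking ``\(H^s\) is an algebra'' directly.
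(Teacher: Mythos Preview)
Your proposal is correct and follows essentially the same approach as the paper: the Banach-algebra property of \(\cD\) (which the paper states in one clause and you spell out via the identification with \(H^2(\TT^2)\)), the uniform boundedness of \({\bf c}_{0,\mathrm{hom}}\), and the structural information from Theorem \ref{theorem-transform} (in particular \(Z^{(m)}_j(q_0,q_1,0,{\bf 0})=0\)) together yield the stated orders of vanishing. The paper's own proof is a three-sentence sketch of exactly this argument; you have simply filled in the bookkeeping that the paper leaves implicit, including the point about \(q\)-only contributions at modes \(m>2N+1\) landing in \({\bf F}_R\) by the convolution structure.
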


\begin{proof}
	The residual terms are defined in Theorem \ref{theorem-transform}. Functions ${\bf G}$, ${\bf G}_R$, ${\bf F}$, and ${\bf F}_R$
	map $\cD$ into $\cD$ since
	$\cD$ forms a Banach algebra with respect to pointwise
	multiplication.
	Using $(q_0,q_1) =  {\bf c}_{0,hom} + {\bf c}_{0,r}$  and the fact that $\|{\bf c}_{0,hom}(\xi)\|_{\CC^4}$ is bounded independently of $\veps$, the bounds on ${\bf G}$, ${\bf G}_R$, ${\bf F}$, and ${\bf F}_R$ follow.
\end{proof}

\subsection{Linearized operator of system (\ref{semilinear-4}).}

The linear part of system (\ref{semilinear-4}) is the linearization around the approximate homoclinic orbit $ q_{0,hom} $
from  Lemma \ref{lemma-homoclinic-orbit}. Due to  the translational and the $ SO(2) $-symmetry of the
family of homoclinic orbits generated by these symmetries applied to the reversible homoclinic orbit, the solution space of the linearized equation includes a two-dimensional subspace spanned by exponentially decaying functions.

\begin{lemma}
	\label{lem47}
	Consider the linear inhomogeneous equation
	\begin{equation}
		\label{lin-inhom-eq}
		\partial_\xi  {\bf c}_{0,r} = \veps
		\Lambda_0 {\bf c}_{0,r} + \veps {\bf F}_h,
	\end{equation}
	with a given ${\bf F}_h \in C_b^0(\mathbb{R},\CC^4)$. The homogeneous equation
	has a two-dimensional stable subspace spanned by the two
	fundamental solutions
	\begin{equation}
		\label{kernel} {\bf s}_1(\xi) = {\bf c}_{0,hom}'(\epsilon \xi), \quad {\bf s}_2(\xi) =
		\ii J {\bf c}_{0,hom}(\epsilon \xi),
	\end{equation}
	where $J = {\rm diag}(1,1,-1,-1)$.
	If ${\bf F}_h = (F_0,F_1,\overline{F_0},\overline{F_1})$ satisfies the constraints
	\begin{equation}
		\label{constraints-kernel}
		F_0(\xi) = \overline{F}_0(-\xi), \qquad
		F_1(\xi) = -\overline{F}_1(-\xi), \qquad \xi \in \mathbb{R},
	\end{equation}
	then there exists a two-parameter family of solutions
	${\bf c}_{0,r} \in C_b^0(\mathbb{R})$ in the form
	\begin{equation*}
		{\bf c}_{0,r} = \alpha_1 {\bf s}_1 + \alpha_2 {\bf s}_2 +
		\tilde{\bf c}_{0,r},
	\end{equation*}
	where $(\alpha_1,\alpha_2) \in \CC^2$
	and $\tilde{\bf c}_{0,r} \in C^0_b(\mathbb{R})$ is a particular solution
	satisfying the constraints (\ref{constraints-kernel}) and the bound
	\begin{equation}
		\label{bound-on-zero-subspace}
		\| \tilde{\bf c}_{0,r} \|_{L^{\infty}(\RR)} \leq C
		\| {\bf F}_h \|_{L^{\infty}(\RR)}
	\end{equation}
	for an $\veps$-independent
	constant $C$. \label{lemma-weak-manifolds}
\end{lemma}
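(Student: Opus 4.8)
The plan is to treat \eqref{lin-inhom-eq} as an \emph{asymptotically hyperbolic} linear ODE on the finite-dimensional space $\CC^4$ (restricted to the real slice on which the last two components are the complex conjugates of the first two) and to use the reversibility structure throughout. First I would identify the kernel. The truncated system \eqref{truncated-system} is autonomous and equivariant both under $\xi$-translation and under the $SO(2)$-action $(q_0,q_1)\mapsto e^{\ii\theta}(q_0,q_1)$ (the latter from Section \ref{secso2}, and the near-identity transformations preserve both symmetries by Theorem \ref{theorem-transform}); differentiating the orbit of the reversible homoclinic of Lemma \ref{lemma-homoclinic-orbit} under these two symmetries at the identity yields the two solutions $\mathbf{s}_1,\mathbf{s}_2$ of \eqref{kernel} of the variational equation along $\mathbf{c}_{0,hom}$, which is exactly the homogeneous part of \eqref{semilinear-4}; the matrix $J=\mathrm{diag}(1,1,-1,-1)$ records the opposite phase of the conjugated components. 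Both decay like $e^{-\veps\alpha|\xi|}$ by \eqref{bound-homoclinic-orbit} (for $\mathbf{s}_1$ one additionally uses that the coefficients are bounded uniformly in $\xi$), they are linearly independent (already in the $\veps\to0$ limit, where they reduce to multiples of $A'$ and of $\ii A$), and a short computation using \eqref{reversibility-homoclinic-orbit} shows $\mathbf{s}_1(0),\mathbf{s}_2(0)$ lie in the anti-fixed space of the reversibility involution $\mathcal{R}$.

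Next I would establish the exponential dichotomies. Since $\mathbf{c}_{0,hom}(\xi)\to0$ exponentially as $\xi\to\pm\infty$, one has $\Lambda_0(\xi)\to\Lambda_0^\infty=\mathrm{diag}(K,K)+\mathcal{O}(\veps)$ with $K=\bpm0&1\\-2\widetilde\omega(\omega_{n_0}''(l_0))^{-1}&0\epm$; as $\widetilde\omega=-\mathrm{sgn}(\omega_{n_0}''(l_0))$, the eigenvalues of $K$ are $\pm\sqrt{2/|\omega_{n_0}''(l_0)|}\ne0$, so $\veps\Lambda_0^\infty$ is hyperbolic with two-dimensional stable and unstable subspaces at distance $\sim\veps$ from $\ii\RR$. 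By roughness of exponential dichotomies, the homogeneous equation has exponential dichotomies on $[0,\infty)$ and on $(-\infty,0]$ with rate $\sim\veps$ and two-dimensional stable, respectively unstable, fibers; after the rescaling $X=\veps\xi$ the dichotomy constants are $\veps$-uniform. Reversibility forces the stable fiber $W^s_+$ at $\xi=0$ to coincide with the unstable fiber $W^u_-$, and since both contain the linearly independent vectors $\mathbf{s}_1(0),\mathbf{s}_2(0)$ and are two-dimensional, $W^s_+=W^u_-=\mathrm{span}\{\mathbf{s}_1(0),\mathbf{s}_2(0)\}$, which equals the anti-fixed space of $\mathcal{R}$. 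In particular any solution bounded on $\RR$ lies in $W^s_+$ at $\xi=0$, so the bounded-solution space of the homogeneous equation is precisely $\mathrm{span}\{\mathbf{s}_1,\mathbf{s}_2\}$; this gives the first assertion of the lemma.

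Finally, for the inhomogeneous problem I would build a particular solution $\mathbf{c}^+$ on $[0,\infty)$ by variation of constants against the $[0,\infty)$-dichotomy; because the dichotomy rate is only $\mathcal{O}(\veps)$ while $\mathbf{F}_h$ enters with a prefactor $\veps$, the convolution estimate yields $\|\mathbf{c}^+\|_{L^\infty([0,\infty))}\le C\|\mathbf{F}_h\|_{L^\infty}$ with $C$ independent of $\veps$ — this is exactly why the lemma is stated with $\veps\mathbf{F}_h$. The constraints \eqref{constraints-kernel} encode compatibility of $\mathbf{F}_h$ with $\mathcal{R}$, so that the reversibility reflection carries solutions of \eqref{lin-inhom-eq} between $[0,\infty)$ and $(-\infty,0]$; using the splitting $\CC^4=W^s_+\oplus\mathrm{Fix}(\mathcal{R})$ from the previous step, the two free constants along $\mathrm{span}\{\mathbf{s}_1,\mathbf{s}_2\}$ are fixed so that the reflected half-line solutions match at $\xi=0$ in value and — using the reversibility relation satisfied by $\Lambda_0$ — in first derivative, producing a global bounded $\tilde{\mathbf{c}}_{0,r}\in C^0_b(\RR)$ obeying the constraints \eqref{constraints-kernel} and the bound \eqref{bound-on-zero-subspace}. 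Adding $\alpha_1\mathbf{s}_1+\alpha_2\mathbf{s}_2$, a bounded homogeneous solution for every $(\alpha_1,\alpha_2)\in\CC^2$, produces the asserted two-parameter family.

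The main obstacle is this last step: one must check that \eqref{constraints-kernel} actually removes the solvability obstruction for a \emph{bounded} solution on all of $\RR$. This is where the precise identification $W^s_+=W^u_-=$ (anti-fixed space of $\mathcal{R}$) is essential — it is what makes $\CC^4$ the direct sum of $W^s_+$ and $\mathrm{Fix}(\mathcal{R})$ and hence lets the $\mathcal{R}$-reflected half-line solutions be glued; equivalently, in the Palmer/Fredholm description, the bounded solutions of the adjoint linearized equation inherit the same reversibility structure, so pairing them against an $\mathcal{R}$-compatible $\mathbf{F}_h$ produces an integrand of definite parity under $\xi\mapsto-\xi$ and the solvability integrals vanish. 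Carrying along correctly the signs in the reversibility involution — how $\mathcal{R}$ acts on $q_0$, on $q_1$, on the conjugated components, and on the forcing term versus the state (cf.\ \eqref{reversibility-transform}, \eqref{reversibility-constraint}) — is the one genuinely fiddly bookkeeping task; the remainder is a routine application of exponential-dichotomy theory for asymptotically hyperbolic linear ODEs.
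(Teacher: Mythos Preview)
Your proposal is correct and follows essentially the same route as the paper: identify $\mathbf{s}_1,\mathbf{s}_2$ from the translational and $SO(2)$ symmetries, use asymptotic hyperbolicity of $\Lambda_0(\xi)$ to get half-line exponential dichotomies, produce a bounded half-line solution by variation of constants, and then extend to all of $\RR$ via the reversibility symmetry. The only presentational difference is that where you phrase the gluing step abstractly as the splitting $\CC^4 = W^s_+\oplus\mathrm{Fix}(\mathcal{R})$ (equivalently, Palmer/Fredholm with the parity argument), the paper carries out the same step concretely: it writes $\tilde{\mathbf{c}}_{0,r}=\mathbf{c}_{0,r}^{(p)}+\tilde\alpha_1\mathbf{s}_1+\tilde\alpha_2\mathbf{s}_2$, solves the $2\times2$ linear system for $(\tilde\alpha_1,\tilde\alpha_2)$ that enforces the reversibility constraints at $\xi=0$, and checks invertibility via $A(0)\neq 0$, $A''(0)\neq 0$ from the leading-order NLS soliton --- this is exactly the concrete verification of your claimed transversality $W^s_+\pitchfork\mathrm{Fix}(\mathcal{R})$.
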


\begin{proof}
	As already said,
	the existence of the two-dimensional stable subspace spanned by (\ref{kernel}) follows
	from the translational symmetries due to spatial translations and phase rotations of the truncated system (\ref{truncated-system}).
	Since the truncated system is posed in $\CC^4$, the solution space is  four dimensional and the other two fundamental solutions of the homogeneous equation are
	exponentially growing as $\xi\to\infty$. This can be seen from the limit of $\Lambda_0(\xi)$ as $\xi \to \infty$. Indeed, we have
	$$\lim_{\xi\to\infty}\Lambda_0(\xi)= \begin{pmatrix}
		K & 0 \\ 0 & K
	\end{pmatrix},$$
	the eigenvalues of which are $\pm \sqrt{-2\widetilde{\omega} (\omega_{n_0}''(l_0))^{-1}}$, each being double, where $\widetilde{\omega} (\omega_{n_0}''(l_0))^{-1}<0$ by assumption of Theorem \ref{thm1}.

	As a result, system \eqref{lin-inhom-eq} possesses an exponential dichotomy, see Proposition 1 in Chapter 4 and the discussion starting on page 13 of \cite{coppel1971dichotomies}. The existence of a particular bounded solution ${\bf c}_{0,r} \equiv {\bf c}_{0,r}^{(p)}$ satisfying the bound  \eqref{bound-on-zero-subspace} now follows from Theorem 7.6.3 in \cite{henry1981}. Let us then define $\tilde{\bf c}_{0,r} := {\bf c}_{0,r}^{(p)} + \tilde{\alpha}_1 {\bf s}_1  + \tilde{\alpha}_2 {\bf s}_2 = (\tilde{\mathfrak{q}}_0,\tilde{\mathfrak{q}}_1, \overline{\tilde{\mathfrak{q}}_0},\overline{\tilde{\mathfrak{q}}_1})$ and pick the unique values of $\tilde{\alpha}_1$ and $\tilde{\alpha}_2$ to satisfy the constraints
	\begin{equation}
		\label{constraint-c}
		{\rm Im}(\tilde{\mathfrak{q}}_0)(0) = 0 \quad \mbox{\rm and } \quad
		{\rm Re}(\tilde{\mathfrak{q}}_1)(0) = 0.
	\end{equation}
	This is always possible since
	$$
	{\bf s}_1(0) = \left( \begin{array}{c} Q_0'(0) \\ Q_1'(0) \\ \bar{Q}_0'(0) \\ \bar{Q}_1'(0) \end{array} \right) \quad \mbox{\rm and} \quad	{\bf s}_2(0) = \left( \begin{array}{c} \ii Q_0(0) \\ \ii Q_1(0) \\ -\ii \bar{Q}_0(0)  \\ -\ii \bar{Q}_1(0) \end{array} \right),
	$$
	where ${\rm Re}(Q_0(0)) = A(0) + \mathcal{O}(\veps)$,
	${\rm Im}(Q_1(0)) = \mathcal{O}(\veps)$, ${\rm Im}(Q_0'(0)) = \mathcal{O}(\veps)$, and ${\rm Re}(Q_1'(0)) = A''(0) + \mathcal{O}(\veps)$ by Lemma \ref{lemma-homoclinic-orbit} with $A(0) \neq 0$ and $A''(0) \neq 0$. Hence, for every ${\bf c}_{0,r}^{(p)} \in L^{\infty}(\RR)$, the linear system (\ref{constraint-c}) for $\tilde{\alpha}_1$ and $\tilde{\alpha}_2$ admits a unique solution such that
	$$
	|\tilde{\alpha}_1| + |\tilde{\alpha}_2| \leq C \| {\bf c}_{0,r}^{(p)} \|_{L^{\infty}(\RR)}.
	$$
	The solution $\tilde{\bf c}_{0,r} = {\bf c}_{0,r}^{(p)} + \tilde{\alpha}_1 {\bf s}_1  + \tilde{\alpha}_2 {\bf s}_2$ is bounded and satisfies the bound (\ref{bound-on-zero-subspace}).

	The matrix $\Lambda_0$ commutes with the symmetry operator defined by \eqref{constraints-kernel}, i.e. if ${\bf f}\in C^0_b(\RR,\CC^4)$ satisfies \eqref{constraints-kernel}, then so does $\Lambda_0 {\bf f}$. In addition, the right-hand side ${\bf F}_h = (F_0,F_1,\overline{F_0},\overline{F_1})$ satisfies \eqref{constraints-kernel}. Hence the vector field is closed
	in the subspace satisfying (\ref{constraints-kernel}). If
	a bounded solution $\tilde{\bf c}_{0,r} = (\tilde{\mathfrak{q}}_0,\tilde{\mathfrak{q}}_1, \overline{\tilde{\mathfrak{q}}_0},\overline{\tilde{\mathfrak{q}}_1})$ on $(0,\infty)$ satisfies the constraints (\ref{constraint-c}), then
	its extension on $(-\infty,\infty)$ belongs to the subspace satisfying
	\eqref{constraints-kernel}. Thus, the existence of the bounded solution $\tilde{\bf c}_{0,r}$ of (\ref{lin-inhom-eq}) satisfying (\ref{constraints-kernel}) and (\ref{bound-on-zero-subspace}) is proven.
	A general bounded solution of (\ref{lin-inhom-eq})
	has the form ${\bf c}_{0,r} = \alpha_1 {\bf s}_1 +
	\alpha_2 {\bf s}_2 + \tilde{\bf c}_{0,r}$, where
	$(\alpha_1,\alpha_2)\in\CC^2$ are arbitrary.
\end{proof}

\begin{remark}
	If $|\alpha_1|+ |\alpha_2| \neq 0$, then the solution ${\bf c}_{0,r} = \alpha_1 {\bf s}_1 + \alpha_2 {\bf s}_2 + \tilde{\bf c}_{0,r}$ does not satisfy the reversibility constraint (\ref{constraints-kernel}) because
	${\bf s}_1$ and ${\bf s}_2$ violate the reversibility constraints.
\end{remark}

\subsection{Estimates for the local center-stable manifold.}

We are now ready to construct a local center-stable manifold
for system (\ref{semilinear-4-5}). Let us split the components in ${\bf c}_r$ in three sets denoted
by ${\bf c}_s$, ${\bf c}_u$, and ${\bf c}_c$, where
${\bf c}_s$, ${\bf c}_u$, and ${\bf c}_c$ correspond to components of $\Lambda_r$ with eigenvalues $\lambda$ with ${\rm Re}(\lambda) < 0$, ${\rm Re}(\lambda) > 0$, and ${\rm Re}(\lambda) = 0$ respectively.

\begin{remark}
	These coordinates correspond to the stable, unstable, and reduced center manifold of the linearized system in Lemma \ref{lem-linear}, where the reduced center manifold is obtained after the double zero eigenvalue is removed since the eigenspace of the double zero eigenvalue is represented by the coordinate ${\bf c}_{0,r}$.
\end{remark}

We study the coordinates ${\bf c}_s$, ${\bf c}_u$, and ${\bf c}_c$
in  subsets of the phase space $\cD$ denoted by $\cD_s$, $\cD_u$, and $\cD_c$ respectively. Similarly, the restrictions of $\Lambda_r$ to the three subsets of $\cD$ are denoted by $\Lambda_s$, $\Lambda_u$, and $\Lambda_c$ respectively. Moreover, let  $P_j$ for $ j = s,u,c $ be the projection operator from $\cD$ to $\cD_j$ satisfying
$\|P_s	\|_{\cD \to \cD}+ \|P_u	\|_{\cD \to \cD} + \|P_c	\|_{\cD \to \cD} \leq C$ for some $C > 0$.

We make the following assumption on the semi-groups generated by the linearized system, cf. \cite{LBCCS09}.

\begin{ass}
	\label{manifoldsass}
	There exist $ K > 0$ and $ \veps_0 > 0 $ such that for all $ \veps \in (0, \veps_0) $ we have
	\begin{align*}
		\|e^{\Lambda_s \xi} \|_{\cD \to \cD}
		& \leq K , \qquad  \xi \geq 0,
		\\
		\|e^{\Lambda_u \xi}	\|_{\cD \to \cD}&  \leq K ,  \qquad
		\xi \leq 0,
		\\
		\|e^{\Lambda_c \xi}	\|_{\cD \to \cD} & \leq K , \qquad   \xi \in \mathbb{R}.
	\end{align*}
\end{ass}

The following theorem gives the construction of the local center-stable manifold near the reversible homoclinic orbit of Lemma \ref{lemma-homoclinic-orbit}. It also provides a classification of  all parameters of the local manifold which will be needed in Section \ref{sec-5} to satisfy the reversibility conditions. The center-stable manifold is constructed for $  \xi \in [0,\veps^{-(2N+1)}] $ and not for all $ \xi \geq 0 $. The bound of $\mathcal{O}(\veps^{2N})$ on the coordinates ${\bf c}_{0,r}$ and ${\bf c}_r$
is consistent with the bound (\ref{property-2}) in Theorem \ref{thm1}.

\begin{theo}
	\label{theorem-reduction}
	Under Assumption \ref{manifoldsass}, there exist $ \veps_0 > 0 $, $ C > 0 $ such that for all
	$\veps \in (0,\veps_0)$
	the following holds.
	For every ${\bf a} \in \cD_c$, ${\bf b} \in \cD_s$, and $(\alpha_1,\alpha_2) \in \CC^2$  satisfying
	\begin{equation}
		\label{bound-initial}
		\|{\bf a} \|_{\cD_c} \leq C \veps^{2N},
		\quad \| {\bf b} \|_{\cD_s} \leq C \veps^{2N}, \quad |\alpha_1| + |\alpha_2| \leq C \veps^{2N},
	\end{equation}
	there exists a family of  local solutions of system (\ref{semilinear-4-5}) satisfying the bound
	\begin{equation}
		\label{bound-center} \sup_{ \xi \in [0,\veps^{-(2N+1)}]} ( \|
		{\bf c}_{0,r}(\xi) \|_{\CC^4} + \| {\bf c}_c(\xi) \|_{\cD_c} + \| {\bf c}_s(\xi) \|_{\cD_s} +  \| {\bf c}_u(\xi) \|_{\cD_u}
		)\leq C
		\veps^{2N},
	\end{equation}
	as well as the identities ${\bf c}_c(0) = {\bf a}$,
	$e^{-\xi_0 \Lambda_s} {\bf c}_s(\xi_0) = b$ at $\xi_0 = \veps^{-(2N+1)}$,
	and ${\bf c}_{0,r} = \alpha_1 {\bf s}_1 + \alpha_2 {\bf s}_2 + \tilde{\bf c}_{0,r}$ with uniquely defined $\tilde{\bf c}_{0,r}:[0,\veps^{-(2N+1)}] \to\CC^4$.
\end{theo}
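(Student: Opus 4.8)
The plan is to construct the center-stable manifold by a Lyapunov--Perron fixed-point argument on $[0,\xi_0]$, $\xi_0 = \veps^{-(2N+1)}$, recasting \eqref{semilinear-4-5} as a coupled system of Duhamel (variation-of-constants) integral equations. For the $\CC^4$-component ${\bf c}_{0,r}$ I would invoke a finite-interval version of Lemma~\ref{lem47}: the homogeneous equation $\partial_\xi{\bf c}_{0,r}=\veps\Lambda_0(\xi){\bf c}_{0,r}$ has a $2$-$2$ exponential dichotomy (decaying space $\mathrm{span}({\bf s}_1,{\bf s}_2)$ coming from the translational and $SO(2)$ symmetries, plus a growing complement), so the general solution is $\alpha_1{\bf s}_1+\alpha_2{\bf s}_2+\tilde{\bf c}_{0,r}$ with $\tilde{\bf c}_{0,r}$ the particular solution generated by the dichotomy Green's function from the inhomogeneity $\veps{\bf G}({\bf c}_{0,r},{\bf c}_r)+\veps^{2N+1}{\bf G}_R({\bf c}_{0,hom}+{\bf c}_{0,r},{\bf c}_r)$, obeying \eqref{bound-on-zero-subspace} and uniquely pinned down by the reversibility normalisation ${\rm Im}\,\tilde{\mathfrak{q}}_0(0)=0$, ${\rm Re}\,\tilde{\mathfrak{q}}_1(0)=0$. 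For the remaining components I would use the semigroup representations permitted by Assumption~\ref{manifoldsass}: ${\bf c}_c(\xi)=e^{\Lambda_c\xi}{\bf a}+\int_0^\xi e^{\Lambda_c(\xi-s)}P_c[\veps^2{\bf F}+\veps^{2N+2}{\bf F}_R]\,ds$ (forward integration, $e^{\Lambda_c\xi}$ bounded for all $\xi$); ${\bf c}_s$ by the stable Duhamel formula from $\xi=0$ normalised so that $e^{-\xi_0\Lambda_s}{\bf c}_s(\xi_0)={\bf b}$; and ${\bf c}_u(\xi)=-\int_\xi^{\xi_0}e^{\Lambda_u(\xi-s)}P_u[\veps^2{\bf F}+\veps^{2N+2}{\bf F}_R]\,ds$ (backward integration from $\xi_0$, $e^{\Lambda_u\xi}$ bounded for $\xi\le 0$, with vanishing unstable datum at $\xi_0$). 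Together these define a map $\mathcal T$ on the complete metric space $\mathcal B:=\{({\bf c}_{0,r},{\bf c}_c,{\bf c}_s,{\bf c}_u)\in C^0([0,\xi_0])\ :\ \sup_{[0,\xi_0]}(\|{\bf c}_{0,r}\|_{\CC^4}+\|{\bf c}_c\|_{\cD_c}+\|{\bf c}_s\|_{\cD_s}+\|{\bf c}_u\|_{\cD_u})\le C_\ast\veps^{2N}\}$.

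Next I would verify $\mathcal T(\mathcal B)\subset\mathcal B$ and that $\mathcal T$ contracts. The data ${\bf a},{\bf b},\alpha_1,\alpha_2$ contribute only $C_\ast$-independent multiples of $\veps^{2N}$ by \eqref{bound-initial}, and the semigroup factors only the constant $K$ of Assumption~\ref{manifoldsass}; the work is in the Duhamel integrals of the nonlinearities, estimated through Lemma~\ref{lemma-residual}. The apparent obstruction is that $\xi_0$ is enormous, so a crude bound $\int_0^{\xi_0}(\cdots)\,ds\le\xi_0\sup(\cdots)$ applied to a source of size $\veps^{2N+2}$ only yields $\mathcal O(\veps)$, which is far too large. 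This is defeated by the exponential decay \eqref{bound-homoclinic-orbit} of the homoclinic orbit: $\int_0^{\xi_0}\|{\bf c}_{0,hom}(s)\|_{\CC^4}\,ds\le C\int_0^\infty e^{-\veps\alpha s}\,ds=C\veps^{-1}$, so the part of ${\bf F}_R$ controlled by $\|{\bf c}_{0,hom}\|$ integrates to $\mathcal O(\veps^{-1})$, while the part controlled by $\|{\bf c}_{0,r}\|+\|{\bf c}_r\|\le C_\ast\veps^{2N}$ integrates to at most $\xi_0\cdot C_\ast\veps^{2N}=C_\ast\veps^{-1}$; multiplication by $\veps^{2N+2}$ then makes the $\int_0^\xi e^{\Lambda_\bullet(\xi-s)}\veps^{2N+2}{\bf F}_R\,ds$ contribution $\mathcal O(\veps^{2N+1})$, and the extra factor $\|{\bf c}_r\|$ in ${\bf F}$ likewise renders $\int_0^\xi e^{\Lambda_\bullet(\xi-s)}\veps^2{\bf F}\,ds$ of order $\mathcal O(\veps^{2N+1})$. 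Combined with the $L^\infty$-bound \eqref{bound-on-zero-subspace} for $\tilde{\bf c}_{0,r}$ (where the dominant piece, coming from $\veps^{2N}{\bf G}_R$, is a $C_\ast$-independent multiple of $\veps^{2N}$), one obtains $\mathcal T(\mathcal B)\subset\mathcal B$ by choosing $C_\ast$ large relative to $K$ and the constants of Lemmas~\ref{lemma-residual} and \ref{lem47}, and then $\veps_0$ small. The same decay/prefactor bookkeeping, now applied to the Lipschitz bounds for ${\bf G},{\bf F},{\bf G}_R,{\bf F}_R$ that follow from Lemma~\ref{lemma-residual} and the Banach-algebra structure of $\cD$, gives a contraction constant $\mathcal O(\veps)$, so Banach's theorem produces a unique fixed point.

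Unwinding the definitions, the fixed point is the asserted family of local solutions of \eqref{semilinear-4-5}, parametrised by $({\bf a},{\bf b},\alpha_1,\alpha_2)$, satisfying the bound \eqref{bound-center}, the identities ${\bf c}_c(0)={\bf a}$ and $e^{-\xi_0\Lambda_s}{\bf c}_s(\xi_0)={\bf b}$, and the decomposition ${\bf c}_{0,r}=\alpha_1{\bf s}_1+\alpha_2{\bf s}_2+\tilde{\bf c}_{0,r}$ with $\tilde{\bf c}_{0,r}$ uniquely determined; the continuous dependence of all data and nonlinearities on $\xi$ and $\veps$ transfers to the fixed point.

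The step I expect to be the main obstacle is closing the coupled estimates on the very long interval $[0,\xi_0]$ with the merely-bounded, non-contracting central semigroup $e^{\Lambda_c\xi}$ from Assumption~\ref{manifoldsass} --- in particular, tracking that the coupling of the $\CC^4$-block ${\bf c}_{0,r}$ to the infinite-dimensional block ${\bf c}_r$ through ${\bf G}$ and through the residuals is weak enough (via the $\veps^{2N+2}$-coupling produced by the normal-form transformations of Theorem~\ref{theorem-transform} and the $\mathcal O(\veps^{-1})$, rather than $\mathcal O(\xi_0)$, $L^1_\xi$-size of ${\bf c}_{0,hom}$ from \eqref{bound-homoclinic-orbit}) that the fixed-point iteration still converges. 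A secondary technical point is upgrading Lemma~\ref{lem47} from the whole line to the interval $[0,\xi_0]$, with a particular solution obeying \eqref{bound-on-zero-subspace} and the reversibility normalisation; this is routine from the $2$-$2$ exponential dichotomy of the homogeneous $\CC^4$-equation but has to be written out.
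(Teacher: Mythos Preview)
Your strategy is essentially the paper's: a Lyapunov--Perron formulation with the same Duhamel representations for ${\bf c}_c,{\bf c}_s,{\bf c}_u$ (identical boundary conditions at $0$ and $\xi_0$, and the same crucial use of $\int_0^{\infty}\|{\bf c}_{0,hom}(s)\|\,ds\le C\veps^{-1}$ from \eqref{bound-homoclinic-orbit} to defeat the length $\xi_0=\veps^{-(2N+1)}$), combined with Lemma~\ref{lem47} for the $\CC^4$-block. Two points of divergence deserve comment.

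First, instead of re-proving a finite-interval analogue of Lemma~\ref{lem47}, the paper multiplies the nonlinearity by a smooth cut-off $\chi_{[0,\xi_0]}$ and extends the ${\bf c}_{0,r}$-equation to $[-\xi_0,\xi_0]$ by a symmetric reflection of the source, so that the extended inhomogeneity satisfies \eqref{constraints-kernel} and Lemma~\ref{lem47} applies verbatim on $\mathbb{R}$. Your route is equally valid but costs you the extra write-up you anticipate.

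Second---and this is the obstacle you flag but do not resolve---the one-shot contraction on a single ball of radius $C_\ast\veps^{2N}$ does not close as written. In the bound $\|{\bf G}({\bf c}_{0,r},{\bf c}_r)\|\le C(\|{\bf c}_{0,r}\|^2+\|{\bf c}_r\|)$ of Lemma~\ref{lemma-residual} the linear term $\|{\bf c}_r\|$ carries \emph{no} small prefactor, so \eqref{bound-on-zero-subspace} gives $\|\tilde{\bf c}_{0,r}\|_{L^\infty}\le \tilde C\,C_\ast\veps^{2N}+\mathcal{O}(\veps^{2N})$ with a fixed constant $\tilde C$ that need not be $<1$; the self-map condition then cannot be met by enlarging $C_\ast$, and for the same reason the Lipschitz constant of the ${\bf c}_{0,r}$-component of $\mathcal T$ with respect to ${\bf c}_r$ is $\mathcal{O}(1)$, not $\mathcal{O}(\veps)$. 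The paper sidesteps this by a two-level scheme: it first solves the (cut-off, reflected) equation for ${\bf c}_{0,r}$ as a map of ${\bf c}_r$ via the implicit function theorem, obtaining the bound \eqref{mapping-stable-unstable-weak}, and then substitutes this map into the integral equations for ${\bf c}_r$ alone, closing by a bootstrap on $S(\xi_0)=\sup\|{\bf c}_r\|$. You can repair your direct fixed point equivalently by using separate radii for the two blocks (fix $C_r$ first from the ${\bf c}_r$-estimates, then take $C_0\ge \tilde C\,C_r+\mathrm{const}$ for ${\bf c}_{0,r}$), or by a weighted product norm; but some such device is needed.
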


\begin{proof}
	In order to construct solutions of system (\ref{semilinear-4-5}) on $[0,\xi_0]$ with some $\veps$-dependent $\xi_0 > 0$, we multiply the nonlinear vector field of system
	(\ref{semilinear-5}) by a smooth  cut-off function $\chi_{[0,\xi_0]} \in C^{\infty}([0,\infty))$ such that
	\begin{equation}
		\label{semilinear-6}
		\partial_\xi  {\bf c}_r = \Lambda_r
		{\bf c}_r + \veps^2 \chi_{[0,\xi_0]} {\bf F}({\bf c}_{0,hom} + {\bf c}_{0,r},{\bf
			c}_{r}) + \veps^{2N+2} \chi_{[0,\xi_0]} {\bf F}_R({\bf c}_{0,hom} + {\bf
			c}_{0,r},{\bf c}_r), \quad \xi \in [0,\xi_0],
	\end{equation}
	where $\chi_{[0,\xi_0]}(\xi) =  1 $  for $ \xi \in [0,\xi_0]$
	and $\chi_{[0,\xi_0]}(\xi) = 0 $ for $\xi \in (\xi_0,\infty)$.
	Similarly, we multiply the nonlinear vector field of
	system (\ref{semilinear-4}) by the same cut-off function
	and add a symmetrically reflected vector field on $[-\xi_0,0]$ to obtain
	\begin{align}
		\nonumber
		\partial_\xi  {\bf c}_{0,r} & = \veps
		\Lambda_0 {\bf c}_{0,r} + \veps \chi_{[0,\xi_0]} {\bf G}({\bf c}_{0,r},{\bf c}_r) +
		\epsilon^{2N+1} \chi_{[0,\xi_0]} {\bf G}_R({\bf c}_{0,hom} + {\bf
			c}_{0,r},{\bf c}_r) \\
		\nonumber
		& + \veps \chi_{[-\xi_0,0]} {\bf G}^*({\bf
			c}_{0,r},{\bf c}_r) +
		\epsilon^{2N+1} \chi_{[-\xi_0,0]} {\bf G}_R^*({\bf c}_{0,hom} + {\bf
			c}_{0,r},{\bf c}_r), \quad \xi \in [-\xi_0,\xi_0]\\
		&=:\veps \Lambda_0  {\bf c}_{0,r} + \veps \tilde{\bf G}( {\bf c}_{0,r}, {\bf c}_{r}) +\veps^{2N+1}\tilde{\bf G}_r,
		\label{semilinear-7}
	\end{align}
	where
	$$
	\bar{\bf G}_0^*(-\xi) := {\bf G}_0(\xi), \quad
	\bar{\bf G}_1^*(-\xi) := -{\bf G}_1(\xi), \quad
	\bar{\bf G}_{R,0}^*(-\xi) := {\bf G}_{R,0}(\xi), \quad
	\bar{\bf G}_{R,1}^*(-\xi) := -{\bf G}_{R,1}(\xi),
	$$
	for all $\xi \in [0,\xi_0]$, resulting in $\tilde{\bf G}$ and $\tilde{\bf G}_R$ satisfying the reversibility condition \eqref{constraints-kernel}. This modification allows us to apply Lemma \ref{lemma-weak-manifolds} on $\mathbb{R}$.

	We are looking for a global solution of system (\ref{semilinear-6})-(\ref{semilinear-7}) for $\xi \in [0,\infty) $ which may be unbounded as $\xi \to \infty$. This global solution coincides with a local solution of system (\ref{semilinear-4-5}) on the interval
	$[0,\xi_0] \subset \mathbb{R}$.

	We write ${\bf c}_{0,r} = \alpha_1 {\bf s}_1 + \alpha_2 {\bf s}_2 + \tilde{\bf c}_{0,r}$ and rewrite \eqref{semilinear-7} as an equation for $\tilde{\bf c}_{0,r}$. By the construction of the vector field in system (\ref{semilinear-7}), the vector field satisfies the reversibility constraints (\ref{constraints-kernel}). By the bounds of Lemma \ref{lemma-residual} and
	the invertibility of the linear operator in Lemma \ref{lemma-weak-manifolds},  the implicit function theorem implies that there exists a unique map from ${\bf c}_r \in C^0_b([0,\xi_0],\cD)$ to ${\bf c}_{0,r} \in C^0_b([0,\xi_0],\CC^4)$ satisfying
	\begin{equation}
		\label{mapping-stable-unstable-weak} \sup_{ \xi \in [0,\xi_0]}
		\| {\bf c}_{0,r}(\xi) \|_{\CC^4} \leq |\alpha_1| + |\alpha_2| + C
		\sup_{ \xi \in [0,\xi_0]} \| {\bf c}_r(\xi) \|_{\cD} +
		\veps^{2N} C \sup_{\xi \in [0,\xi_0]} \left(  1 + \| {\bf
			c}_r(\xi) \|_{\cD} \right),
	\end{equation}
	as long as $|\alpha_1| + |\alpha_2| + \sup\limits_{\xi \in [0,\xi_0]} \| {\bf c}_r(\xi) \|_{\cD} \leq C \veps^{\mu}$ for some $C > 0$ and $\mu > 0$.

	Using the variation of constant formula the solution of system (\ref{semilinear-6}) projected to $\cD_c \oplus \cD_s \oplus \cD_u$ can be rewritten in the integral form
	\begin{align}
		\nonumber {\bf c}_c(\xi) &= e^{\xi \Lambda_c} {\bf a} +
		\veps^2 \int_0^{\xi}  e^{(\xi-\xi') \Lambda_c} P_c  {\bf F}({\bf
			c}_{0,hom}(\epsilon \xi') + {\bf c}_{0,r}(\xi'),{\bf c}_r(\xi')) d \xi'\\
		\label{mapping-center}
		&	 + \veps^{2N+2} \int_0^{\xi}  e^{(\xi-\xi') \Lambda_c} P_c  {\bf
			F}_R({\bf c}_{0,hom}(\epsilon \xi') + {\bf c}_{0,r}(\xi'),{\bf c}_r(\xi'))	d\xi',
	\end{align}
	\begin{align}
		\nonumber {\bf c}_s(\xi) &= e^{\xi \Lambda_s} {\bf b} -
		\veps^2 \int_{\xi}^{\xi_0}  e^{(\xi-\xi') \Lambda_s} P_s  {\bf F}({\bf
			c}_{0,hom}(\epsilon \xi') + {\bf c}_{0,r}(\xi'),{\bf c}_r(\xi')) d \xi'\\
		\label{mapping-center1}
		&	 - \veps^{2N+2} \int_{\xi}^{\xi_0}  e^{(\xi-\xi') \Lambda_s} P_s  {\bf
			F}_R({\bf c}_{0,hom}(\epsilon \xi') + {\bf c}_{0,r}(\xi'),{\bf c}_r(\xi'))	d\xi',
	\end{align}
	and
	\begin{align}
		\nonumber {\bf c}_u(\xi) &=
		-\veps^2 \int_{\xi}^{\xi_0}  e^{(\xi-\xi') \Lambda_u} P_u  {\bf F}({\bf
			c}_{0,hom}(\epsilon \xi') + {\bf c}_{0,r}(\xi'),{\bf c}_r(\xi')) d \xi'\\
		\label{mapping-center2}
		&	 - \veps^{2N+2} \int_{\xi}^{\xi_0}  e^{(\xi-\xi') \Lambda_u} P_u  {\bf
			F}_R({\bf c}_{0,hom}(\epsilon \xi') + {\bf c}_{0,r}(\xi'),{\bf c}_r(\xi'))	d\xi',
	\end{align}
	where ${\bf c}_c(0) = {\bf a}$, ${\bf c}_s(\xi_0) = e^{\xi_0 \Lambda_s}  {\bf b}$, and ${\bf c}_u(\xi_0) = {\bf 0}$. It is assumed in (\ref{mapping-center}), (\ref{mapping-center1}), and (\ref{mapping-center2}) that
	${\bf c}_{0,r} \in C^0_b([0,\xi_0],\CC^4)$ is expressed in terms of
	${\bf c}_r \in C^0_b([0,\xi_0],\cD)$ by using the map satisfying (\ref{mapping-stable-unstable-weak}). The existence of a unique local (small) solution ${\bf c}_c \in C^0_b[0,\xi_0],\cD_c)$, ${\bf c}_s \in C^0_b([0,\xi_0],\cD_s)$, and ${\bf c}_u \in C^0_b([0,\xi_0],\cD_u)$
	in the system of integral equations  (\ref{mapping-center}), (\ref{mapping-center1}), and (\ref{mapping-center2})  follows from
	the implicit function theorem for small $\veps>0$ and finite $\xi_0 > 0$. To estimate this solution and to continue it for larger values of $\xi_0$, we use the bounds of Lemma \ref{lemma-residual} and Assumption \ref{manifoldsass}. It follows from (\ref{mapping-center}) that
	\begin{align*}
		\sup_{\xi \in [0,\xi_0]} \| {\bf c}_c(\xi) \|_{\cD_c} &\leq K \left[
		\| {\bf a} \|_{\cD_c} +
		\veps^2 C \int_0^{\xi_0} \| {\bf c}_{0,hom}(\epsilon \xi) \|_{\CC^4} \| {\bf
			c}_r(\xi)
		\|_{\cD} d\xi + \veps^2 C \xi_0 \sup_{\forall \xi \in [0,\xi_0]} \| {\bf
			c}_r(\xi) \|^2_{\cD} \right. \\
		&+ 		\left.  \veps^{2N+2} C \int_0^{\xi_0} \| {\bf
			c}_{0,hom}(\epsilon \xi) \|_{\CC^4} d\xi + \veps^{2N+2} C \xi_0  \sup_{
			\xi \in [0,\xi_0]} \| {\bf c}_r(\xi) \|_{\cD} \right]
	\end{align*}
	as long as $|\alpha_1| + |\alpha_2| + \sup\limits_{\xi \in [0,\xi_0]} \| {\bf c}_r(\xi) \|_{\cD} \leq C \veps^{\mu}, \mu >0$. Similar estimates are obtained for $ \sup\limits_{\xi \in [0,\xi_0]} \| {\bf c}_s(\xi) \|_{X_s} $ and $ \sup\limits_{\xi \in [0,\xi_0]} \| {\bf c}_u(\xi) \|_{X_u} $.

	We denote
	$$
	S(\xi_0) := \sup_{ \xi \in [0,\xi_0]} \| {\bf c}_c(\xi) \|_{\cD_c}
	+ \sup_{\xi \in [0,\xi_0]} \| {\bf c}_s(\xi) \|_{\cD_s}
	+  \sup_{ \xi \in [0,\xi_0]} \| {\bf c}_u(\xi) \|_{\cD_u}.
	$$
	Since
	$$
	\epsilon \int_0^{\infty} \| {\bf c}_{0,hom}(\epsilon \xi) \|_{\CC^4} d\xi  \leq C
	$$
	due to the bound (\ref{bound-homoclinic-orbit}),
	it follows from the previous estimates that there exists $C > 0$ such that
	\begin{align}
		\nonumber
		S(\xi_0)
		&
		\leq C \left( \| {\bf a} \|_{\cD_c} + \|{\bf b}\|_{\cD_s} + \veps^{2N+1} +
		\veps S(\xi_0)
		+ \veps^2 \xi_0 S(\xi_0)^2 	+  \veps^{2N+2} \xi_0 S(\xi_0)  \right).
	\end{align}
	Using a bootstrapping argument, we show that $S(\veps^{-(2N+1)}) \leq C \veps^{2N}$ if $\veps>0$ is small enough. To do so, let us choose ${\bf a}$, ${\bf b}$ and $(\alpha_1,\alpha_2)$ to satisfy the
	bound (\ref{bound-initial}) and let $\delta \in (0,1)$. Then
	\begin{align}
		S(\xi_0) \leq C\left( \veps^{2N} + (\veps +\veps^{2N+1+\delta} \xi_0) S(\xi_0)  \right)
		\label{center-manifold-bound}
	\end{align}
	as long as $S(\xi_0)\leq \veps^{2N-1+\delta}.$

	For $\xi_0=0$ we have $S(\xi_0)\leq C\veps^{2N}$ because $\|{\bf c}_c(0)\|_{\cD_c}=\|{\bf a}\|_{\cD_c}$, $\|{\bf c}_s(0)\|_{\cD_s}=\|{\bf b}\|_{\cD_s}$, and $\|{\bf c}_u(0)\|_{\cD_u}=0$. Let us assume that there is $\xi_*\in (0, \veps^{-(2N+1)}]$ such that $S(\xi_*)=\veps^{2N-1+\delta}$ and $S(\xi_0)<\veps^{2N-1+\delta}$ for all $\xi_0\in (0,\xi_*)$. Then \eqref{center-manifold-bound} implies $S(\xi_0)\leq C(\veps^{2N} +\veps^\delta S(\xi_0))$ for all $\xi_0 \in (0,\xi_*)$ and hence
	$$S(\xi_*)\leq C\veps^{2N} < \veps ^{2N-1+\delta}$$
	for $\veps>0$ small enough. This is a contradiction and we get that  $S(\xi_0)\leq \veps^{2N-1+\delta}$ for all $\xi_0\in [0, \veps^{-(2N+1)}]$. Applying again \eqref{center-manifold-bound}, we get
	\begin{equation}
		\label{bound-final} S(\veps^{-(2N+1)}) \leq C \veps^{2N}.
	\end{equation}
	In view of the bound (\ref{mapping-stable-unstable-weak}), it follows that
	the local solution satisfies the bound (\ref{bound-center}).
\end{proof}

\begin{remark}
	The proximity bound (\ref{bound-center}) yields the estimate (\ref{property-2}) in Theorem \ref{thm1}.
\end{remark}

\begin{remark}
	Assumption \ref{manifoldsass} can be satisfied
	for smooth small-contrast potentials, see Lemmas \ref{lem-constant} and \ref{lem-eigenvalues-constant}.
	For $ \rho $ with a small non-zero contrast spectral gaps occur in Figure \ref{figure4}.	Smoothness of $ \rho $ allows to control the size of the spectral gaps for large $ \lambda $, cf.  \cite{eastham}. Assumption \ref{manifoldsass} can be weakened and Jordan-blocks can be allowed, see Remark \ref{R:Jord-2}.
\end{remark}

\begin{remark}\label{R:Jord-2}
	In the generic case of eigenvalues, the Jordan blocks of which have length two, the bounds of Assumption \ref{manifoldsass} must be replaced by
	\begin{align*}
		\|e^{\Lambda_s \xi} \|_{\cD \to \cD}
		& \leq K |t|, \qquad  \xi \geq 0,
		\\
		\|e^{\Lambda_u \xi}	\|_{\cD \to \cD}&  \leq K |\xi| ,  \qquad
		\xi \leq 0,
		\\
		\|e^{\Lambda_c \xi}	\|_{\cD \to \cD} & \leq K |\xi|, \qquad
		\xi \in \mathbb{R}.
	\end{align*}
	The equivalent bound for the estimate of $\sup_{\xi \in [0,\xi_0]} \| {\bf c}_c(\xi) \|_{\cD_c}$ is given by
	\begin{align*}
		\sup_{\xi \in [0,\xi_0]} \| {\bf c}_c(\xi) \|_{\cD_c} &\leq K \left[
		\xi_0 \| {\bf a} \|_{\cD_c} +
		\veps^2 C \int_0^{\xi_0} (\xi_0 -\xi)  \| {\bf c}_{0,hom}(\epsilon \xi) \|_{\CC^4} \| {\bf
			c}_r(\xi)
		\|_{\cD} d\xi \right. \\
		&+ 		\left. \quad \veps^2 C \xi_0^2 \sup_{\xi \in [0,\xi_0]} \| {\bf
			c}_r(\xi) \|^2_{\cD}  +  \veps^{2N+2} C \int_0^{\xi_0} (\xi_0 - \xi)\| {\bf
			c}_{0,hom}(\epsilon \xi) \|_{\CC^4} dy \right. \\
		&+ \left. \quad \veps^{2N+2} C \xi_0^2  \sup_{
			\xi \in [0,\xi_0]} \| {\bf c}_r(\xi) \|_{\cD} \right]
	\end{align*}
	and similarly for
	$ \sup\limits_{\xi \in [0,\xi_0]} \| {\bf c}_s(\xi) \|_{X_s} $ and
	$ \sup\limits_{\xi \in [0,\xi_0]} \| {\bf c}_u(\xi) \|_{X_u}  $.
	This yields with the help of Gronwall's inequality and the bound
	$$
	\lim_{\xi_0 \to \infty} \epsilon^2 \int_0^{\xi_0} (\xi_0 - \xi) \| {\bf c}_{0,hom}(\epsilon \xi) \|_{\CC^4} d\xi
	= \lim_{y_0 \to \infty} \int_0^{y_0} (y_0 - y) \| {\bf c}_{0,hom}(y) \|_{\CC^4} d y  \leq C
	$$
	that
	\begin{align}
		\nonumber
		S(\xi_0)
		&
		\leq C \left( \xi_0 \| {\bf a} \|_{\cD_c} + \xi_0 \|{\bf b}\|_{\cD_s} + \veps^{2N} + \veps^2 \xi_0^2 S(\xi_0)^2 + \veps^{2N+2} \xi_0^2 S(\xi_0)  \right).
		\label{center-manifold-bound-new}
	\end{align}
	This bound still implies $S(\xi_0) \leq C \veps^{2N}$
	but for $\xi_0 = \veps^{-N-1}$ and  $\| {\bf a} \|_{\cD_c} + \|{\bf b}\|_{\cD_s} \leq \veps^{3N+1}$. Thus, the justification result of Theorem \ref{thm1} can be extended on the scale of $\xi \in \mathcal{O}(\veps^{-N-1})$ to the generic case of eigenvalues with Jordan blocks of length two when Assumption \ref{manifoldsass} cannot be used, see Lemma \ref{lem-eigenvalues-constant}.
\end{remark}

\section{End of the proof of Theorem \ref{thm1}}
\label{sec-5}

In Theorem \ref{theorem-reduction} we  constructed a family of local bounded solutions of system (\ref{semilinear-4-5}) on
$[0,\veps^{-(2N+1)}]$. These solutions are close to the reversible homoclinic orbit of Lemma \ref{lemma-homoclinic-orbit} in the sense of the bound \eqref{property-2} for appropriately defined $v$ and $h$ but only on $[0,\veps^{-(2N+1)}]$. It remains to extract those solutions of this family which satisfy  \eqref{property-2}  not only on $[0,\veps^{-(2N+1)}]$, but also on $[-\veps^{-(2N+1)},\veps^{-(2N+1)}]$. We do so by
extending  the local solutions on $[0,\veps^{-(2N+1)}]$
to the interval $[-\veps^{-(2N+1)},\veps^{-(2N+1)}]$
with the help of the reversibility constraints.
Obviously this is only possible for the solutions 	which intersect
the fixed space of reversibilty.
Hence, for the proof of Theorem \ref{thm1} it remains to prove that
the local invariant center-stable  manifold
of system (\ref{semilinear-4-5}) intersects the
subspace given by
the reversibility constraints (\ref{reversibility-constraint}).

%
	%
	%

\begin{itemize}
\item Since the initial data ${\bf c}_c(0) = {\bf a} \in \cD_c$ in the
local center-stable manifold of Theorem \ref{theorem-reduction}
are arbitrary, the components of ${\bf a}$ can be chosen to satisfy
the reversibility constraints (\ref{reversibility-constraint}).
For example, using $\hat{a}_{m,k} = (\hat{a}_{m,k}^{(v)},\hat{a}_{m,k}^{(w)})$
for Fourier representation (\ref{Fourier-2}), we can specify the
reversibility constraints as
$$
{\rm Im} \; \hat{a}_{m,k}^{(v)} = 0, \quad
{\rm Re} \; \hat{a}_{m,k}^{(w)} = 0, \quad
(m,k) \in \NN_{\rm odd} \times \ZZ.
$$
\vspace{0.1cm}

\item We have ${\bf c}_{0,r} = \alpha_1 {\bf s}_1 + \alpha_2 {\bf s}_2 +
\tilde{\bf c}_{0,r}$, where $\tilde{\bf c}_{0,r}$ satisfies the reversibility constraints (\ref{reversibility-constraint}) by Lemma \ref{lem47}.
Since  ${\bf s}_1$ and ${\bf s}_2$ violate
(\ref{reversibility-constraint}), setting $\alpha_1 = \alpha_2 = 0$
satisfies the reversibility constraints for ${\bf c}_{0,r} = \tilde{\bf c}_{0,r}$. The choice of $\alpha_1 = \alpha_2 = 0$ is unique by the implicit
function theorem in the proof of Theorem \ref{theorem-reduction}.\\

\item The initial data ${\bf c}_s(0)$ and ${\bf c}_u(0)$ are not arbitrary since the stable and unstable manifold theorems are used for construction of ${\bf c}_{s}$ and ${\bf c}_u$ in the proof of Theorem \ref{theorem-reduction}.
Combining ${\bf c}_{s/u} := ({\bf c}_s,{\bf c}_u)$ together for the complex eigenvalues outside $\ii \mathbb{R}$, we can write ${\bf c}_{s/u}(0) = {\bf b} + \tilde{\bf c}_{s/u}(0)$, where $\tilde{\bf c}_{s/u}(0)$ are uniquely defined of the order of  $\mathcal{O}(\epsilon^{2N})$ and depend on ${\bf b} \in \cD_s$ in higher orders. By the Implicit Function Theorem, there exists a unique solution ${\bf b} \in \cD_s$ of ${\bf b} = {\bf c}_{s/u}(0) - \tilde{\bf c}_{s/u}(0)$ satisfying the reversibility constraints (\ref{reversibility-constraint}) and this unique ${\bf b}\in \cD_s$ satisfies the bound (\ref{bound-initial}).
\end{itemize}

\begin{remark}
There still exist infinitely many parameters after ${\bf a} \in \cD_c$ have been chosen to satisfy the constraint (\ref{reversibility-constraint}),
namely, ${\rm Re} \; \hat{a}_{m,k}^{(v)}$ and ${\rm Im} \; \hat{a}_{m,k}^{(w)}$ for $(m,k) \in \NN_{\rm odd} \times \ZZ$.
These  parameters of the solution of Theorem \ref{thm1}
must satisfy the bound (\ref{bound-initial})
in Theorem \ref{theorem-reduction}.
\end{remark}

\begin{remark}
Since the local center-stable manifold intersects the plane given by the reversibility constraints (\ref{reversibility-constraint}),
we have thus constructed a family of reversible
solutions on $[-\veps^{-(2N+1)},\veps^{-(2N+1)}]$
while preserving the bound (\ref{property-4}). Tracing the coordinate transformations back to the original variables completes the proof of Theorem \ref{thm1}.
\end{remark}

\section{Proof of Theorem \ref{theorem-time}}
\label{sec-6}

For any given point $(x_0,t_0) \in \mathbb{R} \times (0,\infty)$, we
define a triangular region in the backward light cone of the wave equation:
\begin{equation*}
C_{(x_0,t_0)} := \left\{ (x,t) \in \mathbb{R} \times [0,t_0] : \quad |x-x_0| \leq t_0 - t \right\}.
\end{equation*}
To prove Theorem \ref{theorem-time}, we show that solutions of the semi-linear wave equation (\ref{model}) have a finite propagation speed (bounded in absolute value by 1). We start with the following lemma.

\begin{lemma}
	\label{lemma-wave}
	Let $w \in C^2(\mathbb{R}\times \mathbb{R})$ be a solution of the linear wave equation
	\begin{equation}
	\label{lin-wave}
	\partial_t^2 w(x,t) - \partial_x^2 w(x,t) + \rho(x) w(x,t) = F(x,t),
	\end{equation}
	for a given $F \in C^0(\mathbb{R} \times \mathbb{R})$. For any given point $(x_0,t_0) \in \mathbb{R} \times (0,\infty)$, the energy quantity
	$$
	E(t) := \frac{1}{2} \int_{|x-x_0| \leq t_0 - t} \left[ (\partial_t w(x,t) )^2 + (\partial_x w(x,t) )^2 + \rho(x) w(x,t)^2 \right] dx
	$$
	satisfies
\begin{equation}
\label{energy-inequality}
	E(t_2) \leq E(t_1) + \int_{t_1}^{t_2} \int_{|x-x_0| \leq t_0 - t} F(x,t) \partial_t w(x,t) dx dt,
\end{equation}
	for every $0 \leq t_1 \leq t_2 \leq t_0$.
\end{lemma}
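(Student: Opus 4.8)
This is the standard energy-in-the-light-cone argument for the wave equation, adapted to the presence of the bounded potential $\rho$. The plan is to differentiate $E(t)$ in $t$, accounting both for the $t$-dependence of the integrand and for the shrinking of the domain of integration, integrate by parts the cross term $\partial_x w\,\partial_x\partial_t w$, substitute the equation \eqref{lin-wave} to produce the forcing term $F\partial_t w$, and finally check that the two boundary contributions coming from the moving endpoints are pointwise $\le 0$ thanks to $\rho\ge\rho_0>0$. Integrating the resulting differential inequality from $t_1$ to $t_2$ gives \eqref{energy-inequality}.

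\textbf{Key steps.} Write the domain as $[a(t),b(t)]$ with $a(t)=x_0-(t_0-t)$, $b(t)=x_0+(t_0-t)$, so $a'(t)=1$, $b'(t)=-1$. Since $w\in C^2$ and $\rho\in L^\infty$, the map $t\mapsto E(t)$ is locally Lipschitz, hence absolutely continuous, and by the Leibniz rule for almost every $t\in(0,t_0)$
\begin{align*}
E'(t) &= \int_{a(t)}^{b(t)}\bigl[\partial_t w\,\partial_t^2 w+\partial_x w\,\partial_x\partial_t w+\rho\, w\,\partial_t w\bigr]\dd x \\
&\quad -\tfrac12\bigl[(\partial_t w)^2+(\partial_x w)^2+\rho w^2\bigr]_{x=b(t)} -\tfrac12\bigl[(\partial_t w)^2+(\partial_x w)^2+\rho w^2\bigr]_{x=a(t)}.
\end{align*}
Integrating by parts, $\int_{a}^{b}\partial_x w\,\partial_x\partial_t w\dd x=[\partial_x w\,\partial_t w]_{a}^{b}-\int_{a}^{b}\partial_x^2 w\,\partial_t w\dd x$, so using $\partial_t^2 w-\partial_x^2 w+\rho w=F$ we obtain
$$
E'(t)=\int_{|x-x_0|\le t_0-t}F\,\partial_t w\dd x+\mathcal B_b(t)+\mathcal B_a(t),
$$
where $\mathcal B_b(t)=\partial_x w\,\partial_t w-\tfrac12(\partial_t w)^2-\tfrac12(\partial_x w)^2-\tfrac12\rho w^2$ evaluated at $x=b(t)$, and $\mathcal B_a(t)=-\partial_x w\,\partial_t w-\tfrac12(\partial_t w)^2-\tfrac12(\partial_x w)^2-\tfrac12\rho w^2$ evaluated at $x=a(t)$.

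\textbf{Sign of the boundary terms.} Completing the square gives $\mathcal B_b(t)=-\tfrac12(\partial_t w-\partial_x w)^2-\tfrac12\rho w^2\le 0$ and $\mathcal B_a(t)=-\tfrac12(\partial_t w+\partial_x w)^2-\tfrac12\rho w^2\le 0$, the inequality holding for a.e.\ $t$ because $\rho(x)\ge\rho_0>0$ for all $x$. Hence $E'(t)\le\int_{|x-x_0|\le t_0-t}F\,\partial_t w\dd x$ for a.e.\ $t\in(0,t_0)$, and integrating this over $[t_1,t_2]\subset[0,t_0]$ using absolute continuity of $E$ yields \eqref{energy-inequality}.

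\textbf{Main obstacle.} There is no conceptual difficulty; the only points requiring care are bookkeeping of the signs in the two boundary terms (the two endpoints move in opposite directions, which is exactly what makes both square terms appear with the correct sign) and the mild regularity issue that $\rho$ is only in $L^\infty$: one has to justify that $E$ is absolutely continuous and that the boundary terms, though only defined for a.e.\ $t$, are nonpositive for a.e.\ $t$, which suffices to integrate the differential inequality.
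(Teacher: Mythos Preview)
Your proof is correct and follows essentially the same route as the paper: differentiate $E(t)$ taking into account the shrinking domain, integrate by parts, substitute the equation, and complete the square at the two endpoints to see that the boundary contributions are nonpositive since $\rho\ge\rho_0>0$. The only difference is that you are slightly more careful about the $L^\infty$ regularity of $\rho$ (treating $E$ as absolutely continuous and the boundary terms as defined for a.e.\ $t$), whereas the paper simply differentiates as if everything were smooth.
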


\begin{proof}
	Since the solution $w \in C^2(\mathbb{R}\times \mathbb{R})$ is classical and the integration region is finite, we differentiate $E(t)$ in $t$ and obtain
	\begin{align*}
	E'(t) &= \int_{|x-x_0| \leq t_0 - t} \left[ \partial_t w \partial_t^2 w + \partial_x w \partial_x \partial_t w + \rho w \partial_t w \right] dx \\
	& \quad - \frac{1}{2} \left[ (\partial_t w)^2 + (\partial_x w)^2 + \rho w^2 \right] |_{x = x_0+t_0-t} - \frac{1}{2} \left[ (\partial_t w)^2 + (\partial_x w)^2 + \rho w^2 \right] |_{x = x_0-t_0+t} \\
	&= \int_{|x-x_0| \leq t_0 - t} \left[ \partial_t w \partial_x^2 w + \partial_x w \partial_x \partial_t w + F \partial_t w \right] dx \\
	& \quad - \frac{1}{2} \left[ (\partial_t w)^2 + (\partial_x w)^2 + \rho w^2 \right] |_{x = x_0+t_0-t} - \frac{1}{2} \left[ (\partial_t w)^2 + (\partial_x w)^2 + \rho w^2 \right] |_{x = x_0-t_0+t},
	\end{align*}
where we have used (\ref{lin-wave}). Integration by parts yields
	\begin{align*}
E'(t) &= \int_{|x-x_0| \leq t_0 - t}  F \partial_t w dx
- \frac{1}{2} (\partial_t w - \partial_x w)^2 |_{x = x_0+t_0-t}
- \frac{1}{2} (\partial_t w + \partial_x w)^2 |_{x = x_0-t_0+t} \\
& \qquad \qquad - \frac{1}{2} \rho w^2 |_{x = x_0+t_0-t} - \frac{1}{2} \rho w^2 |_{x = x_0-t_0+t} \\
&\leq \int_{|x-x_0| \leq t_0 - t}  F \partial_t w dx ,
\end{align*}
since $\rho \in \mathcal{X}_0$ is positive in (\ref{functions}). Integration in time on $[t_1,t_2]$ yields (\ref{energy-inequality}).
\end{proof}

Lemma \ref{lemma-wave} enables us to use the energy method in the proof of
uniqueness of solutions of the semi-linear wave equation (\ref{model}) inside
the backward light cone $C_{(x_0,t_0)}$ for any fixed  $(x_0,t_0) \in \mathbb{R} \times (0,\infty)$. The uniqueness is equivalent to the property of the finite propagation speed in  (\ref{model}).

\begin{lemma}
	\label{lemma-uniquness}
	Let $u,v \in C^2(\mathbb{R}\times \mathbb{R})$ be two solutions
	of the semi-linear wave equation (\ref{model}) with
	$$
	u(x,0) = v(x,0), \qquad \partial_t u(x,0) = \partial_t v(x,0), \qquad \forall x \in [x_0-t_0,x_0+t_0].
	$$
	Then, $u(x,t) = v(x,t)$ for every $(x,t) \in C_{(x_0,t_0)}$.
\end{lemma}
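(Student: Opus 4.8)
The plan is to reduce the statement to the linear energy inequality of Lemma \ref{lemma-wave} applied to the difference $w := u - v$. First I would observe that $w \in C^2(\mathbb{R}\times\mathbb{R})$ solves
\[
\partial_t^2 w - \partial_x^2 w + \rho(x) w = \gamma r(x)\bigl(u^3 - v^3\bigr) = \gamma r(x)\bigl(u^2 + uv + v^2\bigr)\, w =: F(x,t),
\]
where the key point is that the cubic nonlinearity has been factored so that $F$ is \emph{linear} in $w$ with coefficient $\gamma r(x)(u^2+uv+v^2)$, and $F \in C^0(\mathbb{R}\times\mathbb{R})$ since $u,v\in C^2$. Because $u(x,0)=v(x,0)$ and $\partial_t u(x,0)=\partial_t v(x,0)$ for $x\in[x_0-t_0,x_0+t_0]$, we get $w(x,0)=\partial_t w(x,0)=0$ on that interval, so the associated energy satisfies $E(0)=0$.

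Next I would apply the energy inequality \eqref{energy-inequality} with $t_1=0$ and $t_2=t\in[0,t_0]$, obtaining
\[
E(t) \leq \int_0^t \int_{|x-x_0|\leq t_0-s} F(x,s)\,\partial_s w(x,s)\,dx\,ds.
\]
On the compact triangle $C_{(x_0,t_0)}$ the continuous function $(x,s)\mapsto \gamma r(x)(u^2+uv+v^2)$ is bounded, say by a constant $C_0 = C_0(x_0,t_0,u,v)$, hence $|F|\leq C_0|w|$ there and $|F\,\partial_s w|\leq \tfrac{C_0}{2}\bigl(w^2+(\partial_s w)^2\bigr)$. Using $\rho(x)\geq\rho_0>0$ from \eqref{functions} to bound $w^2\leq \rho_0^{-1}\rho(x) w^2$, we get on each time slice
\[
\int_{|x-x_0|\leq t_0-s}\bigl(w^2+(\partial_s w)^2\bigr)\,dx \leq \max(1,\rho_0^{-1})\int_{|x-x_0|\leq t_0-s}\bigl((\partial_s w)^2+\rho w^2\bigr)\,dx \leq 2\max(1,\rho_0^{-1})\,E(s).
\]
Combining these estimates yields $E(t)\leq C_1\int_0^t E(s)\,ds$ for all $t\in[0,t_0]$, with $C_1 := C_0\max(1,\rho_0^{-1})$ independent of $t$.

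Finally, since $t\mapsto E(t)$ is continuous and nonnegative with $E(0)=0$, Gronwall's inequality forces $E(t)=0$ for every $t\in[0,t_0]$. Hence $(\partial_t w)^2+(\partial_x w)^2+\rho w^2\equiv 0$ on $C_{(x_0,t_0)}$, and positivity of $\rho$ gives $w\equiv 0$, i.e. $u=v$ on $C_{(x_0,t_0)}$. I do not expect a genuine obstacle here: this is the standard finite-propagation-speed argument, and the only places needing (minor) care are the algebraic factorization $u^3-v^3=(u^2+uv+v^2)w$, which makes the forcing genuinely linear in $w$ with a bounded continuous coefficient on the cone, and the use of the positivity of $\rho$ to dominate $w^2$ by the energy density so that the Gronwall step closes.
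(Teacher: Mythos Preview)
Your proof is correct and follows essentially the same approach as the paper: factor $u^3-v^3=(u^2+uv+v^2)w$, apply the energy inequality of Lemma~\ref{lemma-wave} with $E(0)=0$, bound the coefficient on the compact cone, use $\rho\ge\rho_0>0$ for coercivity, and close with Gronwall. The only cosmetic differences are that the paper bounds $|w\,\partial_t w|$ via Cauchy--Schwarz rather than Young's inequality and invokes Sobolev embedding at the end, whereas you read off $w\equiv 0$ directly from the vanishing energy density.
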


\begin{proof}
	Let $w := u - v$. Then $w \in C^2(\mathbb{R}\times \mathbb{R})$ satisfies
	$$
	\partial_t^2 w(x,t) - \partial_x^2 w(x,t) + \rho(x) w(x,t) = \gamma r(x) [u(x,t)^2 + u(x,t) v(x,t) + v(x,t)^2] w(x,t) =: F(x,t),
	$$
	which coincides with (\ref{lin-wave}). By Lemma \ref{lemma-wave} with $t_1 = 0$ and $t_2 = t$, we obtain
	\begin{align*}
	E(t) &\leq E(0) + \int_{0}^{t} \int_{|x-x_0| \leq t_0 - t} F(x,t) \partial_t w(x,t) dx dt \\
	&\leq |\gamma| \| r \|_{L^{\infty}} \int_{0}^{t} \int_{|x-x_0| \leq t_0 - t}  |u^2 + u v + v^2| |w \partial_t w| dx dt \\
	&\leq \frac{3}{2} |\gamma| \| r \|_{L^{\infty}} \left( \|u\|^2_{L^{\infty}(C_{(x_0,t_0)})} + \| v\|^2_{L^{\infty}(C_{(x_0,t_0)})} \right) \int_{0}^{t} \int_{|x-x_0| \leq t_0 - t}  |w \partial_t w| dx dt,
	\end{align*}
since $E(0) = 0$ and $C_{(x_0,t_0)}$ is compact. For $\rho \in \mathcal{X}_0$ given by (\ref{functions}), the energy quantity $E(t)$ is coercive and we have
$$
\| \partial_t w(\cdot,t) \|^2_{L^2(B_{(x_0,t_0,t)})} + \| \partial_x w(\cdot,t) \|^2_{L^2(B_{(x_0,t_0,t)})} + \rho_0 \| w(\cdot,t) \|^2_{L^2(B_{(x_0,t_0,t)})} \leq 2 E(t),
$$
where $B_{(x_0,t_0,t)} := \{ x \in \mathbb{R} : \; |x-x_0| \leq t_0 - t\}$. Hence, by Cauchy--Schwarz inequality, we obtain
	\begin{align*}
E(t) &\leq C \left( \|u\|^2_{L^{\infty}(C_{(x_0,t_0)})} + \| v\|^2_{L^{\infty}(C_{(x_0,t_0)})} \right) \int_{0}^{t} E(t) dt,
\end{align*}
for some $C > 0$ that depends on $\rho,r \in \mathcal{X}_0$. By Gronwall's inequality, this yields $E(t) = 0$ for every $t \in [0,t_0]$ so that
$w(\cdot,t) = 0$ in $H^1(B_{(x_0,t_0,t)})$ and $\partial_t w(\cdot,0) = 0$ in $L^2(B_{(x_0,t_0,t)})$ for every $t \in [0,t_0]$. By Sobolev's embedding of $H^1(\mathbb{R})$ into $C^0_b(\mathbb{R})$, this implies that $w(x,t) = 0$ for every $x \in B_{(x_0,t_0,t)}$ and $t \in [0,t_0]$, that is, for every $(x,t) \in C_{(x_0,t_0)}$.
\end{proof}

\begin{remark}
	Well-posedness of the initial-value problem for the semi-linear wave equation (\ref{model}) with $\rho, r \in \mathcal{X}_0$ and bootstrapping can be used to state that the first partial derivatives of $u(x,t)$ and $v(x,t)$ in $(x,t)$ are also equal for every $(x,t) \in C_{(x_0,t_0)}$.
\end{remark}

\begin{remark}
	Theorem \ref{theorem-time} is a restatement of Lemma \ref{lemma-uniquness}, where $$v(x,t) = v_{\rm ext}(x-c_gt,l_0 x - \omega t,x)$$ is constructed based on Theorem \ref{thm1} and an arbitrary function $\phi$. The function $v_{\rm ext}(\xi,z,x)$ belongs to $C^2(\mathbb{R},\mathcal{X})$ which is weaker than $C^2(\mathbb{R}\times \mathbb{R})$ for the function $v(x,t)$ in variables $(x,t)$. However, Lemmas \ref{lemma-wave} and \ref{lemma-uniquness} can be extended by a simple density argument to functions for which 
	the energy $E(t)$ is bounded in $C_{(x_0,t_0)}$ and for which the linear wave equation (\ref{lin-wave}) is satisfied almost everywhere $C_{(x_0,t_0)}$. 
	Functions $u(x,t)$ and $v(x,t)$ constructed from $v_{\rm ext}(\xi,z,t)$ as in Theorem \ref{theorem-time} belong to the required function space. 
\end{remark}

\bibliographystyle{alpha}
\bibliography{movbib}

\end{document}